\def\e{\epsilon}
\def\R{{ I\!\!R}}
\def\N{\mathbb{N}}
\def\II{{\rm I\kern-0.5exI}}
\def\III{{\rm I\kern-0.5exI\kern-0.5exI}}
\numberwithin{equation}{section}
\newtheorem{theorem}{Theorem}[section]
\newtheorem{lemma}[theorem]{Lemma}
\newtheorem{proposition}[theorem]{Proposition}
\newtheorem{corollary}[theorem]{Corollary}
\begin{document}

\title
{\Large{Regularity and asymptotic behavior of laminar flames in higher dimensions}}
\date{}
\author{
 J. Rubin Abrams\footnote{Department of Mathematics, University of Arizona,
Tucson, AZ 85721. }
\and
Sunhi Choi\footnote{Department of Mathematics, University of Arizona,
Tucson, AZ 85721. }
} 
\maketitle

\begin{abstract} In this paper we study a parabolic free boundary problem,
arising from a model for the propagation of equi-diffusional premixed flames with high activation energy. Consider
the heat equation
$$ \Delta u = u_t, \,\,\, u>0$$
in an unknown domain $\Omega \subset \R^n \times (0,\infty)$ with the following boundary conditions
$$u=0,\,\,\,\,\, |\nabla u|=1$$
on the lateral  free boundary. If the initial
data $u_0$ is compactly supported, then the solution vanishes in a
finite time $T$, which is called the extinction time.  In this
paper, we prove regularity properties of the free boundary after some positive time,  and we investigate the asymptotic behavior of a solution near its
extinction time,  under certain assumptions on the initial data. We give a quantitative estimate on the flatness of the free boundary after $t = 3T/4$, and prove that the free boundary is asymptotically spherical and   the solution is asymptotically   self-similar. We also obtain that the free boundary is a graph of $C^{1+\gamma, \gamma}$ function after some positive time.
\end{abstract}

\section{Introduction}
For a continuous and nonnegative initial function 
$u_0$ defined in
$\R^n$ with a nonempty positive set, we find a
nonnegative continuous function $u$ in $\R^n \times (0, \infty)$ such
that
$$
\left\{\begin{array} {lll}
u_t=\Delta u  &\hbox{ in } & \{u >0\} \\ \\
|\nabla u|= 1, \,\,u=0 &\hbox{ on } & \partial\{u>0\} \\ \\
u(x,0) = u_0(x)
\end{array}\right.\leqno (P)
$$
where $\nabla u$ denotes the spatial gradient of $u$ and $\{u>0\}$
denotes the inverse image $\{(x,t): u(x,t)>0\}$.

If the initial data $u_0$ is compactly supported, then the solution vanishes in a
finite time $T$, called the extinction time. In combustion
theory for laminar flames, $u $ denotes the {\it minus temperature}
$\lambda(T_c-T)$ where $T_c$ is the flame temperature and $\lambda$
is a normalization factor (see [BL]). The region $\Omega:=\{u>0\}$
represents the {\it unburnt zone} and the lateral free boundary
$\Gamma$ of $\Omega$ represents the {\it flame front} (See [BL], [CV]  and [V] for the details in combustion theory).

\subsection{Background}

Assuming $u_0$ is bounded and Lipschitz continuous, a global weak
solution of ($P$) has been obtained by Caffarelli and V\'{a}zquez [CV]  as an asymptotic limit
of the following approximation problems
$$
\left\{\begin{array} {lll}
\partial_t u_\e=\Delta u_\e +\dfrac{1}{\e}\beta(\dfrac{u_\e}{\e})   \\ \\
u_\e(x,0)=u_{0 \e}(x)
\end{array}\right.\leqno (P_\e)
$$
where $\beta $ is a nonnegative smooth function supported on $[0,1]$
with $\int_0^1 \beta =1/2$ and $u_{0 \e}$ approximates $u_0$ in a
proper way. The family of solutions $\{u_\e\}$ is uniformly bounded
in $C^{1,1/2}_{x,t}$-norm on compact sets and they converges along
subsequences to a function $u \in C^{1, 1/2}_{loc}$, which is called
a {\it limit solution} of ($P$). (See Theorem 7.1 of [CV] for the existence, and [CV], [CLW1], [CLW2] and [W] for the properties of limit solutions.)  Uniqueness and regularity properties have been investigated by various authors ([GHV], [K], [LVW] and [P]) for limit solutions as well as {\it viscosity solutions}.  A viscosity solution, introduced in [CLW2] and [LW], is a  weak  solutions which essentially satisfies the comparison principle with the classical sub and super solutions. 

If a limit solution $u$ has a shrinking support, i.e., if $u_0$ is a $C^2$-function with
\begin{equation} \label{shrink}
\Delta u_0 \leq 0 \hbox{ in } \{u_0>0\}\,\,\hbox{ and  }\,\, 
|\nabla u_0| \leq 1 \hbox{ on }
\partial \{u_0>0\},
\end{equation}
then it was proved by Kim [K] that a limit
solution is unique and coincides with a  viscosity solution. In this paper we adopt the notion of a limit solution, and consider a natural situation (\ref{shrink}) for application in which $u$ has a shrinking support initially, i.e.,  the flame advances at the initial time.

For more general initial data $u_0 \in
C^{0,1}(\R^n)$, Weiss showed in [W] that each limit of $u_\e$ is a
solution of $(P)$ in the sense of domain variation. Given a   domain variation solution $(u, \chi)$, Andersson and Weiss proved in [AW]
that one-sided flatness of the free boundary implies regularity. In
particular, the free boundary can be decomposed  into an open
regular set with H\"{o}lder continuous space normal, and a closed
singular set. (See Lemma~\ref{AW} below.)

If an initial data is compactly supported, a solution vanishes in a finite time $T$: the time when the
unburnt zone collapses in a combustion model. Particularly, it was
proved by Galaktionov, Hulshof and V\'{a}zquez [GHV] that if the initial data is radially symmetric and
supported in a ball,  then the solution is  asymptotically {\it
self-similar} near the extinction time $T$. (See Lemma~\ref{pro11} for the definition and existence of   self-similar solutions.) Furthermore,  the authors prove that  the profile of any non-radial solution is asymptotically  self-similar, when the spatial dimension $n=1$.  In this paper, we extend their results on non-radial solutions to higher dimensions, proving that  the free boundary is asymptotically spherical and   the solution is asymptotically   self-similar under  certain assumptions on the initial data.

\subsection{ Main difficulties}
In higher
dimensions $n \geq 1$, not much is known for the behavior of non-radial
solutions of ($P$). Compactness arguments do not apply without a priori knowledge  on regularity properties. In  general setting, it is expected that topological changes of the domain (the positive set of the solution) might occur, possibly generating multiple radial profiles for later times. Due to possible topological changes, the  flatness of the boundary at time $t$ is introduced as $1- r^{in}(t)/r^{out}(t)$, where $r^{in}(t)$ and $r^{out}(t)$ are the maximal and  minimal radii of concentric spheres, which touch the boundary of the domain inside and outside, respectively. (See Figure 1.) 
\begin{figure}
\centering
\includegraphics[width=0.7\textwidth]{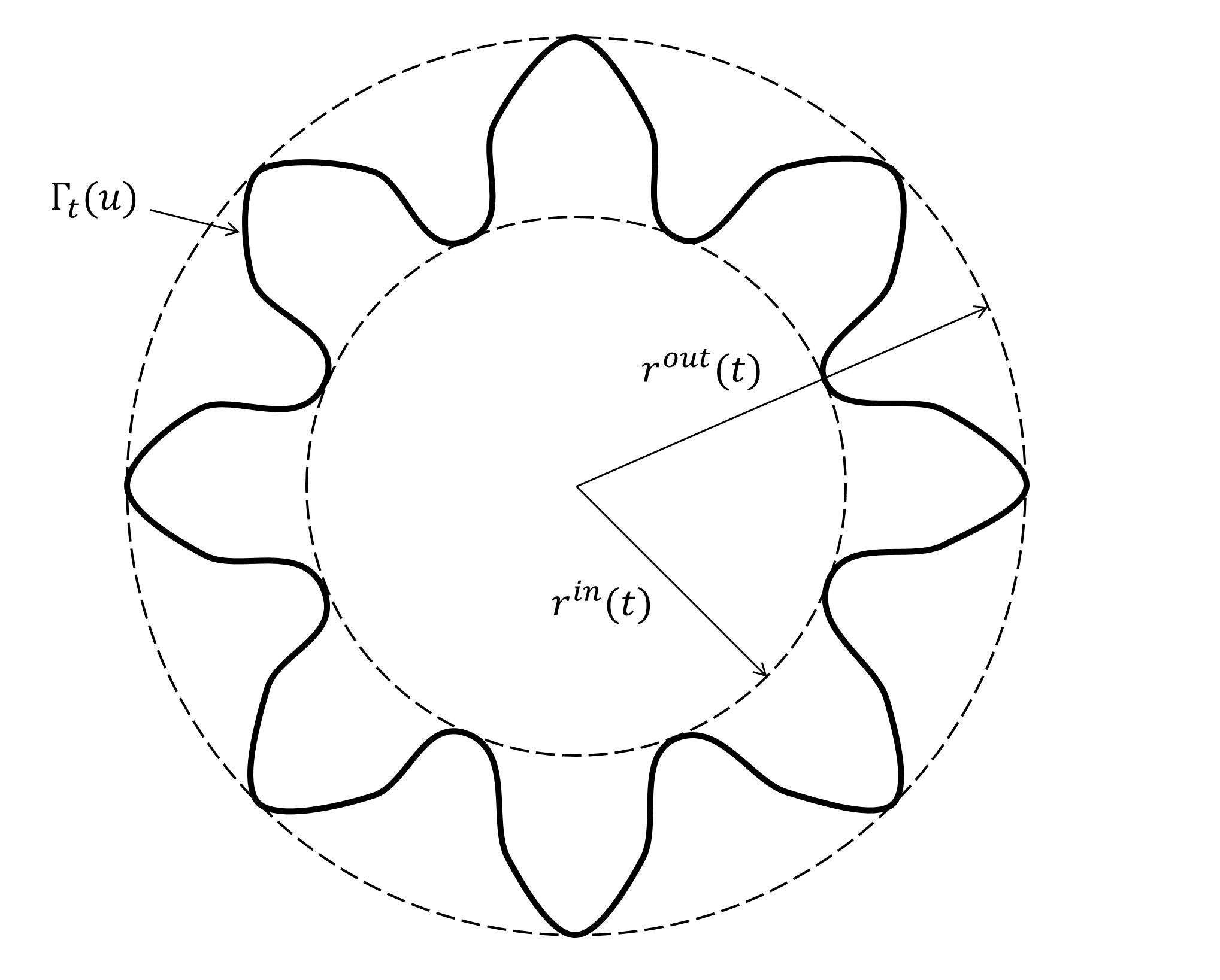} \caption{$r^{in}(t)$ and $r^{out}(t)$} 
\end{figure}
Here, all the (possibly multiple) boundary pieces  are trapped between the boundaries of these concentric spheres.  The main difficulty of this paper lies in obtaining rescaled flatness of the boundary and its  improvement  in  time,  as $t$ approaches the extinction time $T$.

Since there is no stability result on the extinction time
 $T$, i.e.,  a solution focuses at a point with a divergent boundary
 speed of $\sqrt{T-t}$ (See Lemma~\ref{lemma1}), we decompose the time interval $(0, T)$ into dyadic intervals, and rescale the solution. Under certain assumptions on the initial data (see Theorem~\ref{thm1}), we prove
 $$
r^{in}(t)/r^{out}(t) \rightarrow 1 \hbox{ as }t \rightarrow T.
$$
In fact, we obtain (see (i) of Theorem~\ref{thm1}) $$\ln(1-r^{in}(t)/r^{out}(t)) \approx \ln(T-t)$$
by   tracking the interplay between the shape of the boundary and the value of the solution in the interior: improved flatness of the boundary makes the interior value of a solution closer to a radial profile at later times (on the succeeding  dyadic time interval), and then the interior improvement propagates to the boundary, making it  flatter in the next dyadic time interval. This analysis is conducted by  rather delicate constructions of  sub- and supersolutions. 

\subsection{Open problems}

In the main theorem of this paper, flatness of the boundary is obtained when the initial data  is a sum of a radial function and a compactly supported  function $\rho$  such that $\rho$ is periodic in angle with a small $L^\infty $-norm  and a small period. (See Theorem~\ref{thm1} for precise assumptions.)  Here,  $\rho$ is assumed to be  periodic in angle for a technical reason. Due to this assumption, the free boundary is trapped between two concentric spheres with a ``fixed'' center $0$ for all time $t \in  (0,T)$. If $\|\rho\|_\infty$ is small but $\rho$ is not   periodic, then we expect that the free boundary is trapped  between concentric spheres with  ``moving'' centers in time. Hence the center of the concentric spheres, which measure  the flatness of the boundary, should be modified  for each dyadic time intervals in some appropriate way.   

\vspace{0.1 in}

 If $\rho$ is periodic but  $\|\rho\|_\infty$ is not small, then the domain may split into a number of ``large'' pieces, creating multiple profiles of solutions with similar sizes. Consider an example with an initial domain $B_1(0) \subset \R^2$,
 that large initial data around $(\pm 1/2,0)$ leads to the split of
 the domain into two large pieces at a later time. Then the solution is no longer periodic in each of those large pieces of the domain, and it focuses at multiple points other than the origin.  However,  if one assumes that $\rho$ creates only small pieces of the positive set around the main piece of the domain, i.e., the solution extincts at the origin,  then it may happen that after some positive time $\tau <T$, $u(\cdot, \tau)$ is decomposed into the sum of a large radial function and  a small non-radial function. Then the main theorem of this paper can be applied for $t \geq \tau$.    

\vspace{0.1 in}

 Lastly, the periodicity of $\rho$ is assumed to be small in the main theorem. This assumption is used in three places, where the small periodicity of $\rho$ ensures the small oscillation of the  solution $u(x,t)$ on the boundary of some inner sphere. More precisely, using the small periodicity of $\rho$,  we estimate the oscillation of $u(x,t)$  on $\partial B_{r^{in}(t)}(0)$, where $B_{r^{in}(t)}(0)$ is the maximal sphere contained in the positive set of $u(x,t)$.  Hence  the main theorem  will hold for any period $<2 \pi$,  if one can show the following:  after some time $t \geq \tau$, $$ \frac{\hbox{the oscillation of }   u(\cdot, t) \hbox{ on } \partial B_{r^{in}(t)}(0)} {r^{in}(t)} \lesssim \|\rho\|_\infty.$$  The lack of geometric assumptions on the initial free boundary makes this problem more challenging.  (The initial free boundary is not assumed to be a small perturbation of a sphere, it is located between  two concentric  spheres with ``just comparable'' radii. (See Figure 2.)) 
  \begin{figure}
   \centering
   \includegraphics[width=0.7\textwidth]{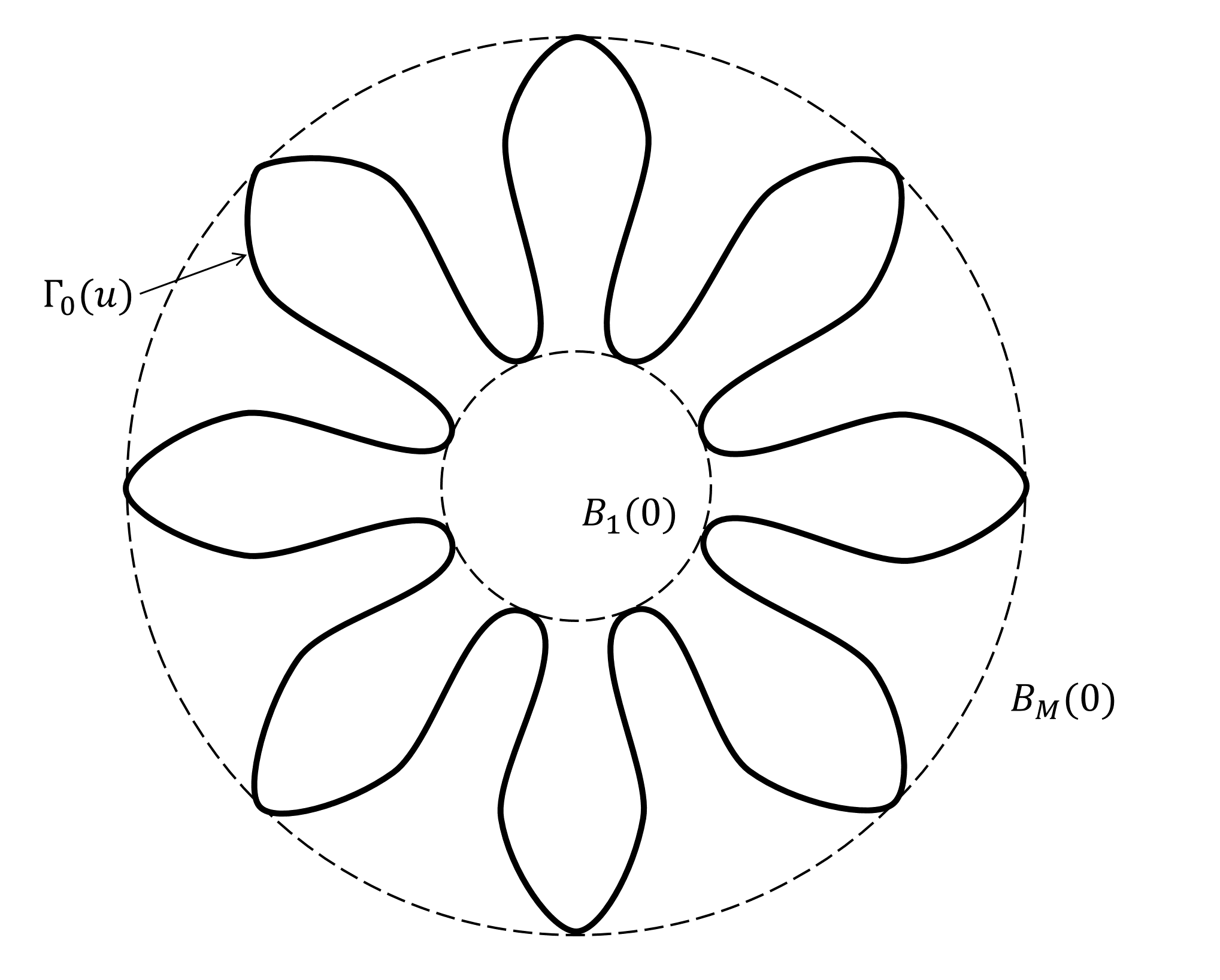}
   \caption{initial free boundary of $u$}
   \end{figure}
  If the shape of the initial boundary is much different from a sphere, the solution might lose the property of ``small oscillation''  for some period of time. More precisely, the oscillation of $u(x,t) /r^{in}(t)$ on  $\partial B_{r^{in}(t)}(0)$ might get smaller than $\|\rho\|_\infty$  possibly {\it after } some {\it waiting time}: one can consider the case that the free boundary is not trapped between closely-located concentric spheres at time $t=T/2$, creating large non-radial data at time $t=T/2$ such that 
  $$\frac{\|u(x,T/2) -\phi(x)\|_\infty} {\|u (x,T/2)\|_\infty} \gg \|\rho\|_\infty$$ where $\phi(x)$ is the maximal radial function $\leq u(x,T/2)$.

\subsection{ Main results}

Below we state the main theorem of this paper, on  the asymptotic behavior of
non-radial, space-periodic solutions  in higher dimensions $n \geq
1$. Under some conditions on the initial data,  the free boundary will be proved asymptotically spherical
near the extinction point $(0, T) \in \R^{n+1}$. Then it will turn out that the solution is
asymptotically self-similar and the free boundary is a graph of
$C^{1+\gamma, \gamma}$ function after some positive time.

For rescaling of the solution, we use the dyadic decomposition of the time interval $(0,T)$, which is  denoted by 
$0 < t_1<t_2 <...<T$.  More precisely, 
  $t_i =(1-2^{-i})T$ for $i \in \N$, i.e.,
$$ t_1 =T/2  \,\, \hbox{,  } \,\, t_{i+1}=t_i + \frac{T-t_i}{2} .$$ The free boundary of $u$ at time $t$ is denoted by $$\Gamma_t(u):= \partial \{x: u(x, t)>0\},$$ and the free boundary of $u$ in $\R^n \times (0, \infty)$  is denoted by $$\Gamma(u) := \partial \{(x,t): u(x,t)>0\} \cap \{t>0\}.$$

\begin{theorem} \label{thm1}
Let $u$ be a solution of ($P$)   with an initial data $u_0$
satisfying (\ref{shrink}).  Suppose
$$u_0=\phi_0+\rho$$
 where $\phi_0$ is a
nonnegative radial function supported on $B_1(0)$ and $\rho$ is a 
nonnegative function such that $u_0$ is compactly supported and its positive set is simply connected. Let $M>1$ be a constant satisfying  
\begin{itemize}
\item[(a)] $\{u_0>0\} \subset B_M(0)$
\item[(b)] $|\nabla u_0| \leq M $
\item[(c)] $\max \phi_0 \geq 1/M$.
\end{itemize}
Then there is a constant $\alpha(n, M)>0$
depending only on the dimension $n$ and $M$ such that the followings hold: if
$\|\rho\|_\infty  \leq \alpha$  and $\rho$ is periodic in angle  with
period $\leq \alpha$ for some  $\alpha \leq \alpha(n,M)$, then
\begin{itemize}
\item[(i)] the free boundary of $u$ is asymptotically spherical near its focusing point $0 \in \R^n$. More precisely, there exist constants $0<h<1$ and $C>0$  depending only
on $n$ and $M$ such that for $t \in [t_k , t_{k+1}]$
and $k\geq 2$,
$$
\Gamma_t(u) \subset B_{(1+C h^k\alpha) r(t)}(0) -B_{r(t)}(0) $$
where $r(t)$ is a decreasing function of $t$ with $r(T)=0$.

\item[(ii)] $u$ is approximated by radial solutions  $v_k$ of (P). More precisely, for $k\geq 2$, there is a radial solution $v_k$ of (P)  such that 
\begin{equation} \label{ss}
|u - v_k| \leq C h^{k} \sqrt{T-t} \hbox{ for } t_k \leq t < T
\end{equation}
 where $C>0$ and $0<h<1$ are constants depending only on $n$ and $M$. Then it turns out that u is  asymptotically self-similar by (\ref{ss}) and the self-similarity of radial solutions.

\item[(iii)] the free boundary $\Gamma(u) \cap \{t_{k_0} < t< T\} $ is a graph of $C^{1+\gamma, \gamma}$ function, where $k_0 \in \N$ is a  constant depending  only on $n$ and $M$. More precisely, $k_0 \geq 2$ is a constant satisfying 
$h^{k_0} < c_0 /\alpha$
where $0<h<1$ and $c_0>0$ are constants depending only on $n$ and $M$.

\end{itemize}
\end{theorem}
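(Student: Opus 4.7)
My plan is to prove part (i) by dyadic induction on $k$, and to deduce parts (ii) and (iii) as direct consequences. Denote the flatness at time $t$ by $F(t) := 1 - r^{in}(t)/r^{out}(t)$. On each dyadic slab $[t_k,t_{k+1}]$ the natural spatial scale is $\sqrt{T-t_k}$ by the boundary-speed estimate of Lemma~\ref{lemma1}, so I would work with the parabolic rescaling $\tilde u_k(y,s) = (T-t_k)^{-1/2}\,u(\sqrt{T-t_k}\,y,\;t_k+(T-t_k)s)$, under which each slab becomes the fixed slab $s\in[0,1/2]$ and the equation $(P)$ is preserved. The induction hypothesis I would carry is: $F(t_k)\leq Ch^k\alpha$, and $\tilde u_k(\cdot,0)$ is within $Ch^k\alpha$ of a radial profile in a suitable rescaled norm.

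The base case $k=2$ follows from (a)--(c) and the smallness and periodicity of $\rho$: diffusion from $0$ to $t_1=T/2$ damps the small-period perturbation of $L^\infty$-norm $\leq\alpha$ so that $u(\cdot,t_1)$ differs from a radial function by $O(\alpha)$ in the interior, and the free boundary conditions $u=0$, $|\nabla u|=1$ transfer this into a boundary flatness bound $F(t_2)\leq C\alpha$. For the inductive step I would implement the boundary-to-interior-to-boundary strategy outlined in Subsection 1.2: given $F(t_k)\leq Ch^k\alpha$, construct radial sub- and super-solutions $\phi_k^\pm$ whose positive sets at time $t_k$ are $B_{r^{in}(t_k)}(0)$ and $B_{r^{out}(t_k)}(0)$ and which bracket $u(\cdot,t_k)$. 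The angular periodicity of $\rho$ is essential here: it is precisely what bounds $\mathrm{osc}_{\partial B_{r^{in}(t_k)}(0)}\,u(\cdot,t_k)/r^{in}(t_k)$ by a multiple of $h^k\alpha$, making it possible to choose comparable radial data above and below. By the comparison principle for limit solutions of $(P)$, the radial evolutions of $\phi_k^\pm$ sandwich $u$ on all of $[t_k,t_{k+1}]$, and their free boundaries then control $F(t_{k+1})$.

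The main obstacle is to show that this comparison \emph{improves} flatness by a fixed factor $h<1$ rather than merely preserving it. This requires a quantitative contraction property of the focusing radial flow: when two radial solutions with supports $B_{r^{in}}(0)$ and $B_{r^{out}}(0)$ at time $t_k$ are evolved for a dyadic step $(T-t_k)/2$, the relative gap $r_k^+(t_{k+1})/r_k^-(t_{k+1})-1$ shrinks by a definite fraction of the gap at $t_k$. I would derive this from an analysis of the self-similar radial profile of Lemma~\ref{pro11}: in self-similar coordinates the focusing flow contracts relative perturbations at an exponential rate in $-\log(T-t)$, and this translates into a fixed contraction per dyadic step. The delicate point is the barrier engineering itself — designing $\phi_k^\pm$ so that the free boundary condition $|\nabla\phi_k^\pm|=1$ is compatible with the bracketing at $t_k$ and that they remain genuine sub/super-solutions of $(P)$ throughout the slab — which I expect to absorb most of the technical work.

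Parts (ii) and (iii) follow from (i) relatively cleanly. For (ii), I would take $v_k$ to be the radial limit solution of $(P)$ whose support at $t_k$ is $B_{(r^{in}(t_k)+r^{out}(t_k))/2}(0)$ with radial data sandwiched between $\phi_k^\pm$; by Lemma~\ref{lemma1} both $u$ and $v_k$ have amplitude comparable to $\sqrt{T-t}$, so the rescaled flatness bound $Ch^k\alpha$ upgrades to $|u-v_k|\leq Ch^k\sqrt{T-t}$ on $[t_k,T)$, and the self-similarity of $v_k$ given by Lemma~\ref{pro11} yields asymptotic self-similarity of $u$. For (iii), choose $k_0$ so that $Ch^{k_0}\alpha<c_0$, where $c_0$ is the one-sided flatness threshold appearing in the Andersson--Weiss regularity result Lemma~\ref{AW}. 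For each $k\geq k_0$ the rescaled free boundary on $[t_k,t_{k+1}]$ lies below that threshold, so Lemma~\ref{AW} produces $C^{1+\gamma,\gamma}$ regularity of $\Gamma(u)$ on the slab with uniform constants, and these local estimates glue across overlapping parabolically-rescaled slabs to give regularity of $\Gamma(u)\cap\{t_{k_0}<t<T\}$.
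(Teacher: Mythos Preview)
Your overall architecture---dyadic rescaling, an interior--boundary iteration, and radial barriers---matches the paper's outline, but the mechanism you invoke for the contraction step is incorrect and this is a genuine gap.

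You propose to sandwich $u$ between two radial \emph{solutions} $\phi_k^\pm$ of $(P)$ and claim that ``in self-similar coordinates the focusing flow contracts relative perturbations at an exponential rate in $-\log(T-t)$.'' But the focusing self-similar solution is a \emph{saddle point} of the radial flow (this is precisely the content of the reference [BHS] cited in the paper), with the unstable direction corresponding to a shift of the extinction time. Concretely, two self-similar solutions with extinction times $T^-<T^+$ have radii $r^\pm(t)=a_2\sqrt{T^\pm-t}$, and the relative gap $r^+(t)/r^-(t)-1$ \emph{diverges} as $t\to T^-$, it does not shrink. Since your $\phi_k^\pm$ start from different supports $B_{r^{in}(t_k)}$ and $B_{r^{out}(t_k)}$ with data bracketing $u$, there is no reason their extinction times coincide with $T$, and without that the comparison gives no improvement at all across a dyadic step. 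The paper flags exactly this obstruction in Subsection~1.2 (``there is no stability result on the extinction time $T$'').

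The paper's fix is \emph{not} to compare with radial solutions of $(P)$ but with radial \emph{caloric} functions in a fixed Lipschitz region $\Omega_1\subset\Omega^{in}$ built in Lemma~\ref{lemma2}. The contraction factor $h<1$ in Lemma~\ref{lemma5} comes from a decomposition $u=\psi_1+\psi_2+\psi_3+\psi_4$ into caloric pieces with separated boundary data, and a dichotomy on $\psi_2$: either $\psi_2\le\tfrac12 C\alpha^{1-\epsilon}\psi_1$ (direct gain), or the radial part of $\psi_2$ is already large and can be absorbed into $\phi$. The small angular period controls only $\psi_4$ (the lateral boundary contribution), and gives oscillation $\lesssim\alpha\,r(t)$, not $\lesssim h^k\alpha\,r(t)$ as you assert; the $h^k$ comes purely from iterating this dichotomy. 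The improved interior estimate is then pushed back to the boundary in Lemma~\ref{lemma6} via a supersolution $v(x,t)=g(x,t)w(f(t)x,t)$ that is engineered by hand, not by invoking a radial solution of $(P)$.

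A secondary gap: for part~(iii) you apply Lemma~\ref{AW} using only one-sided flatness, but that lemma also requires $|\nabla u|\le 1+\tau$ with $\tau\le\sigma_1\sigma^2$ in $Q_\rho^-$. The paper supplies this separately (Corollary~\ref{cor}) by observing that $\max\{|\nabla u|^2-1,0\}$ is subcaloric and vanishes on $\Gamma(u)$, then bounding it near the boundary by a caloric barrier in the Lipschitz envelope $\tilde\Omega$; your proposal does not address this hypothesis.
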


Below we make several remarks on the shape of the free boundary, and some of the assumptions of Theorem~\ref{thm1}.  

\vspace{0.05 in}

\noindent {\bf Remark 1.} The initial free boundary $\Gamma_0(u):=\partial \{u_0>0\}$
is not assumed
to be a slight perturbation of a sphere. It can be any irregular subset of
the annulus $B_M(0) \setminus B_1(0)$, which is periodic in angle (not
necessarily star-shaped). (See Figure 2.)

\vspace{0.05in}

\noindent {\bf Remark 2.}
Assuming  $\|u_0\|_\infty < \infty$, it
was proved in [CV] that 
$|\nabla u(x,t)| \leq  M$ for $t>T/2$ and a constant $M>0$ depending on $n$ and $\|u_0\|_\infty$. Hence without loss of generality, we suppose from the beginning that $u_0$
has bounded interior gradient, i.e., $|\nabla u_0| \leq M.$ Also for
simplicity, we assume that $\{u_0>0\}$ is simply connected. This
assumption is used only in the proof of Lemma~\ref{lemma1}.  

\vspace{0.05 in}

\noindent {\bf Remark 3.} The smallness of the period of $\rho$ can be replaced by 
$\|\nabla \rho\|_\infty \leq \alpha$.

\vspace{0.05 in}

\noindent {\bf Remark 4.}
 Since we do not assume a 
lower bound on $|\nabla \phi_0|$, even a small
function $\rho$ can change the geometry of the boundary in a significant way for a small time: even when we start from a radially symmetric  initial boundary $\Gamma_0(u)=\partial B_1(0)$ (this is the case $\{\rho >0\} \subset \{\phi_0 >0\}$),  a small function
$\rho$ can change the geometry of the boundary  near the initial time so that $r^{in}(t)  \ll r^{out}(t)$.   In fact, for a positive time  $t<T/2$, the boundary is located between two concentric spheres with
``just'' comparable radii (Lemma~\ref{lemma1}). (See Figure 3.)

\vspace{0.05 in}

\noindent {\bf Remark 5.}
Even a  small $\rho$  can change the topology of the
domain creating small pieces of the positive set around  the main
piece of the domain.  However the topological change of the domain does not affect the geometry of the main piece, under the assumptions of Theorem~\ref{thm1}.

\vspace{0.05 in}
  
  The main theorem of this paper is different from a standard
   nonlinear stability analysis, and it is not a perturbation method.   We refer to [BHS] and [BHL] for a linearized stability analysis.

\subsection{Outline of the paper}
We introduce  some preliminary lemmas
and notations in section~\ref{sec2}, and  we prove the main theorem in sections~\ref{sec3} to \ref{sec8}. First in section~\ref{sec3}, we show that the
free boundary $\Gamma_t(u)$ is located between two concentric
spheres with comparable radii at each time $t$, and prove that   the
maximal radial subregion $\Omega^{in} $ of $\Omega$ has a boundary
close to a Lipschitz graph in a parabolic scaling. Then in
section~\ref{sec4} the scaled $\alpha$-flatness of the free boundary
is obtained  when the function $\rho$ has size $\alpha$. In other
words, $\Gamma_t(u)$ is located between two concentric spheres with
the outer radius bounded by $(1+C\alpha) \times $ inner radius at
later times $t$. In section~\ref{sec5} the solution $u$ is
approximated by a radial function at interior points away from the
boundary, and this interior estimate  is improved in
section~\ref{sec6} thanks to the $\alpha$-flatness of the free
boundary. Then in section~\ref{sec7}, the interior improvement
(obtained in section 6) propagates to the free boundary at later
times giving an improved estimate on the location of the free
boundary. More precisely, if the free
 boundary is located near a sphere at each time $t \in (t_k,T)$
 then  several barrier functions will show that the solution $u$ gets
 closer to a radial function $\phi$ at later times $t \in (t_{k+1}, T)$, at points away from the
boundary (Lemma~\ref{lemma5}).  This improved estimate on the values
of $u$ forces the free boundary to be located in a smaller
neighborhood of a sphere at $t \in (t_{k+2}, T)$
(Lemma~\ref{lemma6}). By iteration, it will follow that the free
boundary is asymptotically spherical near the focusing point. In the
last section, the asymptotic behavior of the solution is
investigated and the regularity of the free boundary follows as a
corollary from the flatness of the free boundary and the radial
approximations of the solution.

\section{Preliminary lemmas and notations} \label{sec2}

Below we introduce some notations.

\vspace{0.1in}

\noindent $\bullet$ Denote by $\Omega(u)$, the positive set of $u$,
i.e.,
$$\Omega(u)= \{u>0\}=\{(x,t): u(x,t)>0\}.$$

 \noindent $\bullet$ Denote by $\Gamma(u)$, the  free
boundary  of $u$, i.e.,
$$\Gamma(u) = \partial \Omega(u) \cap \{t>0\}.$$

\noindent $\bullet$ Denote by $\Sigma_t$, the time cross section of
 a space time region $\Sigma$,   i.e.,
$$\Sigma_t=\{x: (x,t) \in \Sigma \}.$$
In particular, $$\Omega_t(u)=\{x: u(x,t)>0\}$$ and $$\Gamma_t(u) =
\partial \Omega_t(u)$$ where $\Gamma_t(u)$ is called the free boundary  of $u$ at time
$t$.

\vspace{0.1in}

\noindent $\bullet$ Denote by $B_r(x)$, the space ball with radius
 $r$, centered at $x$.

\vspace{0.1in}

\noindent $\bullet$ Denote by $Q_r(x,t)$,  the parabolic cube with
radius $r$, centered at $(x,t)$.  Denote by $Q_r^-(x,t)$, its
negative part, i.e.,
$$Q_r(x,t)=B_r(x) \times( t-r^2, t+ r^2), \quad Q_r^-(x,t)=B_r(x) \times( t-r^2, t).$$

\noindent $\bullet$ Denote by $K_r(x,t)$,  the hyperbolic cube with
radius $r$, centered at $(x,t)$, i.e.,
$$K_r(x,t)=B_r(x) \times( t-r, t+ r).$$

\noindent $\bullet$ A space time region $\Omega $ is Lipschitz  in
$Q_r(0)$ (in parabolic scaling) if
$$
Q_r(0) \cap \Omega = Q_r(0) \cap\{(x,t): x_n
> f(x',t)\}
$$
where $x=(x',x_n)\in \R^{n-1} \times \R$ and  $f$ satisfies
$$|f(x',t)-f(y',s)| \leq L (|x'-y'|+|t-s|/r)$$ for some $L>0$, with
$f(0,0)=0$.

\vspace{0.1in}

 \noindent $\bullet$ A function $\rho:\R^n \to \R$
is periodic in angle with period $\leq \alpha$ if
$$\rho(r,\theta_1,..., \theta_i + p_i,..., \theta_{n-1}) =
\rho(r,\theta_1,..., \theta_i,..., \theta_{n-1})$$ for $0\leq p_i
\leq \alpha$ and $1\leq i \leq n-1$.

\vspace{0.1in}

\noindent $\bullet$ Denote by $r^{in}(t)$, the maximal radius of a
sphere centered at the origin which is inscribed in $\Omega_t(u)$,
i.e.,
$$r^{in}(t)= \sup \{r: B_r(0) \subset \Omega_t(u)\}.$$

 \noindent $\bullet$ Denote by $r^{out}(t)$, the minimal radius
of a sphere centered at the origin, in which $\Omega_t(u)$ is
inscribed, i.e.,
$$r^{out}(t)= \inf \{r: \Omega_t(u) \subset B_r(0)  \}.$$

 \noindent $\bullet$ Denote by $\Omega^{in}$, the maximal radial region
inscribed in $\Omega(u)$, i.e., its time cross section $\Omega^{in}_
t$ is given by
$$\Omega^{in}_ t = B_{r^{in}(t)}(0) \hbox{ for all } t. $$

 \noindent $\bullet$ Denote by $0 < t_1<t_2 <...<T$, the dyadic decomposition of  the time interval
$(0,T)$  such that $t_i =(1-2^{-i})T$, i.e.,
$$ t_1 =T/2  \,\, \hbox{,  } \,\, t_{i+1}-t_i = \frac{T-t_i}{2} .$$

\noindent $\bullet$ For positive numbers $a$ and $b$,   write $ a
\approx b$ if there exist positive constants $C_1$ and $C_2$
depending only on $n$ and $M$ such that
$$C_1a \leq b \leq C_2a.$$

Below we state some properties  of caloric   functions defined in
Lipschitz domains,  a comparison principle, results on  existence of
self-similar solutions and asymptotic behavior of radial solutions,
and a regularity result for solutions with flat boundaries.

\begin{lemma} \label{ca} [ACS, Lemma 5] Let $\Omega$ be a Lipschitz domain in $Q_1(0)$
such that $0 \in \partial \Omega$. Let $u$ be a positive caloric
function in $Q_1(0)\cap \Omega$ such that $u=0$ on $\partial
\Omega$, $u(e_n,0) =m_1>0$ and $\sup_{Q_1(0)}u=m_2$. Then there
exist $a>0$ and $ \delta>0$ depending only on $n$, $L$, $m_1/m_2$
such that
$$
w_+ := u+u^{1+a}\hbox{ and } w_- := u-u^{1+a}
$$
are, respectively,  subharmonic and superharmonic in $
Q_{\delta}\cap\Omega\cap\{t=0\}.$
\end{lemma}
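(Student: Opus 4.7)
The plan is to verify the claim by a direct computation on the slice $\{t=0\}$, exploiting the fact that $u$ is caloric. Writing $w_\pm = u \pm u^{1+a}$ and differentiating in $x$, one finds
\begin{equation*}
\Delta w_\pm \;=\; \Delta u\,\bigl(1\pm (1+a)u^a\bigr)\;\pm\;(1+a)a\, u^{a-1}\,|\nabla u|^2,
\end{equation*}
and since $u$ is caloric in $\Omega$ we may replace $\Delta u$ by $u_t$ at each point of the slice. Thus $\Delta w_+\geq 0$ and $\Delta w_-\leq 0$ will both follow provided the ``good'' positive term $(1+a)a\,u^{a-1}|\nabla u|^2$ dominates $|u_t|\bigl(1+(1+a)u^a\bigr)$ pointwise in a neighborhood of $0$.

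To make this precise I would collect three estimates, valid in a slightly smaller parabolic cube $Q_{\delta_0}(0)$. First, a time-derivative bound $|u_t|\leq C$, coming from Schauder-type interior estimates combined with the parabolic boundary H\"older regularity of caloric functions in Lipschitz cylinders. Second, a gradient non-degeneracy $|\nabla u|\geq c_0 > 0$ throughout $\Omega\cap Q_{\delta_0}$, with $c_0$ depending only on $n$, $L$, and $m_1/m_2$. Third, a smallness bound $u(x,0)\leq C \delta_0^{\beta}$ on $Q_{\delta_0}\cap\{t=0\}\cap\Omega$ for some $\beta>0$, which is again parabolic boundary H\"older continuity together with $u(0,0)=0$. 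The crux is the gradient lower bound: the normalization $u(e_n,0)=m_1$ together with the Carleson and boundary Harnack estimates of Athanasopoulos--Caffarelli--Salsa propagates the size $m_1$ down to a linear lower bound $u(x,t)\gtrsim d(x,t)$, where $d$ is the parabolic distance to the lateral boundary; this linear non-degeneracy then upgrades to $|\nabla u|\gtrsim 1$ through an interior gradient estimate applied in parabolic cubes of size comparable to $d$.

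With these three estimates in hand the verification becomes immediate. On $Q_{\delta_0}\cap\{t=0\}\cap\Omega$ the ``bad'' term is bounded by $C\bigl(1+(1+a)u^a\bigr)\leq 2C$, while the ``good'' term is at least $(1+a)a\,c_0^{\,2}\,u^{a-1}$, which blows up as $u\to 0$ because $a<1$. Choosing $\delta \leq \delta_0$ small enough that $u(\cdot,0)$ is uniformly tiny on $Q_\delta\cap\Omega\cap\{t=0\}$ (possible by the smallness bound) forces the good term to strictly dominate, giving $\Delta w_+\geq 0$ there. The identical argument, with signs reversed and using $1-(1+a)u^a > 1/2$ once $u$ is small, yields $\Delta w_-\leq 0$.

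The main obstacle is clearly the quantitative Hopf-type lower bound $|\nabla u|\geq c_0$. In a smooth domain it is elementary, but the merely Lipschitz assumption forces one to invoke the full parabolic boundary Harnack machinery of Athanasopoulos--Caffarelli--Salsa to convert the single interior normalization $u(e_n,0)=m_1$ into a \emph{uniform} linear non-degeneracy $u\gtrsim d$ near $0$. Without such uniformity one cannot control the $u_t$ term by the $u^{a-1}|\nabla u|^2$ term as $u\to 0^+$, and the whole scheme collapses.
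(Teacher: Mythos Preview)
The paper does not prove this lemma; it is quoted verbatim from [ACS, Lemma~5], so there is no in-paper argument to compare against --- only the original one in Athanasopoulos--Caffarelli--Salsa.

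Your computational skeleton is exactly right: expand $\Delta(u\pm u^{1+a})$ on the slice, replace $\Delta u$ by $u_t$, and show that the term $a(1+a)u^{a-1}|\nabla u|^2$ dominates $|u_t|(1+(1+a)u^a)$. The gap is in your second estimate. You assert a uniform gradient lower bound $|\nabla u|\ge c_0>0$ near $0$, obtained from a claimed linear non-degeneracy $u\gtrsim d$. In a merely Lipschitz domain this is false. Already for positive harmonic functions in the Lipschitz cone $\{x_n>L|x'|\}$ the boundary behavior is $u\sim r^{\gamma}$ with $\gamma=\gamma(L)>1$ once $L$ is large, so $|\nabla u|\to 0$ at the vertex. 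Boundary Harnack compares two such functions to each other; it does not upgrade either of them to linear growth. A tell-tale sign: in your scheme any fixed $a\in(0,1)$ would work, whereas the statement makes $a$ depend on $L$.

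What [ACS] actually use are the scale-invariant estimates recorded here as Lemma~\ref{caf421}: $|\nabla u|\approx u/d$ and $|u_t|\lesssim u/d$, with $d$ the distance to the lateral boundary. These combine to $|u_t|\lesssim |\nabla u|$, so the required inequality becomes $u^{a-1}|\nabla u|\gtrsim 1$, i.e.\ $u^{a}\gtrsim d$. Now one feeds in the correct lower H\"older bound $u\gtrsim d^{\gamma}$ with $\gamma=\gamma(n,L)$ (this is what Carleson and boundary Harnack genuinely give), chooses $a<1/\gamma$, and obtains $u^{a}/d\gtrsim d^{a\gamma-1}\to\infty$ as $d\to 0$; taking $\delta$ small finishes the proof. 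The dependence of $a$ on $L$ enters precisely through $\gamma$.
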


\begin{lemma} \label{cafmonoton} [ACS, Theorem 2] Let $\Omega$ and
$u$ be given as in Lemma~\ref{ca}, then for every $\mu \in \{\mu \in
\R^{n+1}: |\mu|=1, e_n \cdot \mu < \cos \theta\}$ where $\theta
=\cot^{-1}(L)/2$, $D_\mu u>0$ in $Q_\delta \cap \Omega$ for a
positive constant $\delta$ depending on $n$, $L$, $m_1/m_2$ and
$\|\nabla u\|_{L^2}$.
\end{lemma}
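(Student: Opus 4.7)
The plan is a parabolic sliding argument that leverages the subharmonic envelope $w_+$ from Lemma~\ref{ca}. Fix $\mu=(\nu,\tau)\in\R^n\times\R$ in the specified cone, and for small $h>0$ set $v_h(x,t):=u(x+h\nu,\, t+h\tau)$. I would aim to show $v_h\geq u$ in $Q_\delta\cap\Omega$; letting $h\to 0^+$ then gives $D_\mu u\geq 0$, with strict positivity following from the parabolic strong maximum principle applied to the caloric function $D_\mu u$.

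The first step is to verify $v_h\geq u$ on the parabolic boundary of $Q_\delta\cap\Omega$. On the lateral piece $\partial\Omega\cap Q_\delta$ one has $u=0$, and the Lipschitz structure of $\partial\Omega$ with constant $L$ (tangent cone of half-aperture $\theta=\tfrac{1}{2}\cot^{-1} L$) combined with the cone condition on $\mu$ guarantees that the shifted point $(x+h\nu,\, t+h\tau)$ lies in $\Omega$, so that $v_h>0=u$ there. On the remaining pieces of $\partial_p(Q_\delta\cap\Omega)$, parabolic Harnack applied to $u$, together with the nondegeneracy $u(e_n,0)=m_1$, gives a quantitative positive lower bound for $u$ on any compact subset of $\Omega\cap Q_1$; interior parabolic regularity together with the assumed $L^2$ gradient bound then yields a quantitative error estimate $v_h\geq u - Ch$ on these faces, with $C$ depending on $n$, $L$, $m_1/m_2$, and $\|\nabla u\|_{L^2}$.

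The second step closes the comparison using $w_+$. Consider $F:=v_h-u+\eta w_+$ for a small parameter $\eta>0$. Since $u$ is caloric and $w_+=u+u^{1+a}$ is subharmonic in each time slice by Lemma~\ref{ca}, the function $F$ satisfies a one-sided parabolic differential inequality of the right sign for the maximum principle. Choosing $\eta$ slightly larger than $Ch$ (and using that $w_+$ is bounded below by a positive constant on the compact interior faces), one gets $F\geq 0$ on $\partial_p(Q_\delta\cap\Omega)$; the maximum principle propagates this inward, and sending first $h\to 0^+$ and then $\eta\to 0^+$ (on a slightly shrunken $Q_{\delta'}$, absorbing the error into the subharmonicity gain of order $u^{1+a}$) yields $v_h\geq u$, hence $D_\mu u\geq 0$. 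The main obstacle is precisely this quantitative boundary comparison on the non-lateral faces: one must arrange that the shift in direction $\mu$ produces a change in $u$ that is linear in $h$ (not merely $o(1)$), which is exactly where the subharmonic envelope from Lemma~\ref{ca} and the $L^2$ gradient bound enter, and which explains why $\delta$ depends on $n$, $L$, $m_1/m_2$ and $\|\nabla u\|_{L^2}$.
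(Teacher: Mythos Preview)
The paper does not actually prove this lemma: it is quoted as a preliminary result from [ACS, Theorem~2] and is used without proof. So there is no ``paper's own proof'' to compare against; the relevant benchmark is the argument in Athanasopoulos--Caffarelli--Salsa.

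Your sliding argument has a genuine gap at the corners of $Q_\delta\cap\Omega$, i.e., where the lateral Lipschitz boundary $\partial\Omega$ meets the bottom and side faces of $Q_\delta$. You need $F=v_h-u+\eta\,w_+\geq 0$ on the full parabolic boundary. On $\partial\Omega$ this is fine since $u=w_+=0$ and $v_h\geq 0$. On the remaining faces you invoke an estimate $v_h\geq u-Ch$ coming from interior regularity and the $L^2$ gradient bound, and then absorb the $-Ch$ by $\eta\,w_+$. But both ingredients degenerate simultaneously as you approach $\partial\Omega$ along those faces: the interior Lipschitz bound for $u$ is not uniform up to $\partial\Omega$ (you only have $\|\nabla u\|_{L^2}$, not $L^\infty$, and parabolic interior estimates blow up near the boundary), while $w_+\to 0$ there. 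So neither $v_h-u\geq -Ch$ nor the compensation $\eta\,w_+\geq Ch$ can be arranged uniformly on the parabolic boundary, and the maximum principle does not close. A related issue is that the angle $\theta=\tfrac12\cot^{-1}(L)$ is strictly smaller than the geometric cone aperture $\cot^{-1}(L)$; a pure sliding argument would at best give monotonicity in the narrower geometric cone, and recovering the sharper half-angle requires more than the comparison you describe.

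The actual ACS proof proceeds quite differently: it combines the parabolic boundary Harnack principle with a backward Harnack inequality to compare $D_\mu u$ with $u$ itself near $\partial\Omega$, obtaining $D_\mu u\geq c\,u/d>0$ rather than attempting a global sliding comparison. That machinery is what handles the boundary behavior uniformly and is why the dependence on $m_1/m_2$ and $\|\nabla u\|_{L^2}$ enters.
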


\begin{lemma} \label{caf421} [ACS, Corollary 4] Let $\Omega$ and
$u$ be given as in Lemma~\ref{ca}, then there exist positive
constants $c_1$ and $c_2$ depending on $n$ and $L$ such that
$$c_1\frac{u(x,t)}{d_{x,t}} \leq |(\nabla_x, \partial_t) u| \leq
c_2\frac{u(x,t)}{d_{x,t}}$$ for every $(x,t) \in K_r(0) \cap
\Omega$, where $d_{x,t}$ is the elliptic distance from $(x,t)$ to
$\partial \Omega$.
\end{lemma}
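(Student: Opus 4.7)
The plan is to prove the two inequalities separately: the upper bound will follow from interior parabolic regularity combined with a Harnack-type estimate up to the boundary, and the lower bound will use the directional monotonicity supplied by Lemma~\ref{cafmonoton}.

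\textbf{Upper bound.} Fix $(x,t) \in K_r(0) \cap \Omega$ and set $d = d_{x,t}$. By the definition of elliptic distance, a parabolic cylinder $Q_{d/2}^-(x, t + d^2/4)$ is contained in $\Omega$, and $u$ solves the heat equation there. Standard interior estimates for caloric functions give
$$
|(\nabla_x,\partial_t) u(x,t)| \leq \frac{C}{d}\,\sup_{Q_{d/2}^-(x,t+d^2/4)} u.
$$
To control this sup by $u(x,t)$ itself, I would chain parabolic Harnack along a short path from $(x,t)$ up to the time slice used in the sup; since the cylinder stays at distance $\gtrsim d$ from $\partial\Omega$ and there is a boundary point where $u=0$ at distance $d$, one obtains $\sup \leq C'\, u(x,t)$ (this is the standard Carleson estimate in Lipschitz cylinders). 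This yields the upper bound with constants depending only on $n$ and $L$.

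\textbf{Lower bound.} From Lemma~\ref{cafmonoton}, there is an open cone $\mathcal{C}\subset S^n$ of unit space-time vectors around $e_n$ such that $D_\mu u>0$ in $Q_\delta \cap \Omega$ for every $\mu\in\mathcal{C}$. Since $\mathcal{C}$ is transversal to the tangent cone of the Lipschitz boundary, any ray $s\mapsto (x,t)-s\mu$ starting at $(x,t)$ hits $\partial\Omega$ at some $\bar{s}\leq C\, d_{x,t}$ with a constant depending only on $L$. Integrating along this ray,
$$
u(x,t) \;=\; \int_0^{\bar{s}} D_\mu u\bigl((x,t)-s\mu\bigr)\,ds.
$$
Now $D_\mu u$ is itself a positive caloric function in the interior (the heat operator commutes with constant-coefficient directional derivatives), so one can chain parabolic Harnack along the portion of the path lying at distance $\gtrsim d_{x,t}$ from $\partial\Omega$ to compare $D_\mu u$ along the ray with its value at $(x,t)$. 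The ray segment near the boundary is negligible because $u$ is H\"older continuous and vanishes on $\partial\Omega$. Hence $u(x,t)\leq C\,\bar{s}\,D_\mu u(x,t)\leq C\, d_{x,t}\,|(\nabla_x,\partial_t)u(x,t)|$, which is the desired lower bound.

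\textbf{Main obstacle.} The delicate step is the Harnack chain for $D_\mu u$ along the ray toward the boundary: the ray approaches $\partial\Omega$ where the usual interior Harnack fails, so one must choose a tubular neighborhood of the ray whose width shrinks proportionally to the distance to $\partial\Omega$ and verify that the resulting chain of balls can be traversed with a uniformly bounded number of Harnack applications, depending only on $n$ and $L$. This is where the Lipschitz hypothesis and the monotonicity cone of Lemma~\ref{cafmonoton} are essential, as they guarantee that the geometry of this tube degenerates in a controlled, scale-invariant way.
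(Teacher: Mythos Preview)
The paper does not prove this lemma at all: it is stated as a citation of Corollary~4 in [ACS] and is used as a black box. So there is no ``paper's own proof'' to compare against; your write-up is an attempt to reconstruct the original [ACS] argument.

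Your upper bound is fine and is indeed the parabolic Carleson estimate in Lipschitz cylinders. The lower bound, however, has a real gap. After writing
\[
u(x,t)=\int_0^{\bar s} D_\mu u\bigl((x,t)-s\mu\bigr)\,ds,
\]
you want $\int_0^{\bar s} D_\mu u \le C\, d_{x,t}\, D_\mu u(x,t)$. For the portion of the ray at distance $\gtrsim d_{x,t}$ from $\partial\Omega$ interior Harnack on the positive caloric function $D_\mu u$ does the job, but the segment approaching the boundary is \emph{not} negligible for the reason you state: H\"older continuity of $u$ tells you nothing about the size of $D_\mu u$ there, and $D_\mu u$ need not be small near $\partial\Omega$ (in fact it is typically of order~$1$ for solutions of $(P)$). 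A shrinking Harnack chain as in your ``Main obstacle'' paragraph would need an \emph{unbounded} number of applications as you approach $\partial\Omega$, so the constant blows up.

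The argument that actually works, and is what [ACS] does, is to bypass the ray integration entirely and apply the parabolic boundary Harnack (comparison) principle directly to the pair $D_\mu u$ and $u$: both are positive caloric in $\Omega$ and vanish continuously on $\partial\Omega$, so their ratio is bounded above and below in $K_r(0)\cap\Omega$ by its value at a fixed interior reference point, with constants depending only on $n$ and $L$. Evaluating that ratio at the reference point via interior estimates gives $D_\mu u \approx u/d$, and summing over a spanning set of directions $\mu$ in the monotonicity cone yields both inequalities at once.
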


\begin{lemma} \label{Di262} [D, Theorem 12.2]
Let $u$ be a caloric function in $Q_\delta =Q_\delta(0)$, then there
exists a dimensional constant $C>0$ such that
$$\| \nabla u \|_{\infty, Q_{\sigma \delta}^-} \leq \frac{C}{(1-\sigma)^{n+3}\delta|Q_\delta^-|}
\int_{Q_\delta^-}|u|dxdt,$$
$$\| u_t \|_{\infty, Q_{\sigma \delta}^-} \leq \frac{C}{(1-\sigma)^{n+4}\delta^2|Q_\delta^-|}
\int_{Q_\delta^-}|u|dxdt$$ for $\sigma \in (0,1)$ where
$|Q_\delta^-|$ is the volume of $Q_\delta^-$.
\end{lemma}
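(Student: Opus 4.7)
The plan is to reduce the statement to the standard $L^\infty$--$L^1$ interior estimate for caloric functions. Since $u$ solves $u_t = \Delta u$ in $Q_\delta$, every spatial derivative $\partial_i u$ and the time derivative $u_t = \Delta u$ also satisfy the heat equation in the interior. So it suffices to combine (a) a parabolic mean-value/Moser estimate, applied to $\nabla u$ and $u_t$, with (b) a Caccioppoli energy inequality that converts $L^1$ norms of derivatives back to $L^1$ norms of $u$.

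For step (a), I would prove the standard bound: for any caloric $v$ in $Q_r^-(x_0,t_0)\subset Q_\delta$,
\begin{equation*}
|v(x_0,t_0)| \;\leq\; \frac{C_0}{|Q_r^-|}\int_{Q_r^-(x_0,t_0)} |v|\,dx\,dt,
\end{equation*}
obtainable by Moser iteration: one applies Caccioppoli plus Sobolev embedding on a shrinking sequence of cubes and sums the resulting geometric series. For step (b), I would test $\Delta u = u_t$ against $\eta^2 u$ with a smooth cutoff $\eta$ supported in $Q_\rho^-$ and equal to $1$ on $Q_{\rho/2}^-$, with $|\nabla\eta|\lesssim 1/\rho$ and $|\eta_t|\lesssim 1/\rho^2$. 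Integration by parts and Cauchy--Schwarz yield the familiar bound $\int_{Q_{\rho/2}^-}|\nabla u|^2 \leq C\rho^{-2}\int_{Q_\rho^-}|u|^2$, and iterating once more gives an analogous bound for $D^2 u$ and $u_t$. Combining with Cauchy--Schwarz in $L^1$--$L^2$ converts these to the desired $L^1$ estimates (with the extra factor $\delta^{-1}$ for $\nabla u$ and $\delta^{-2}$ for $u_t$).

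The last step is to chain these bounds from $Q_{\sigma\delta}^-$ out to $Q_\delta^-$ without losing the correct $(1-\sigma)$ dependence. Here I would introduce a telescoping sequence of radii $r_k = \sigma\delta + (1-\sigma)\delta\cdot 2^{-k}$; at each step the mean-value bound is applied on $Q_{r_k}^-(x_0,t_0)$ (which lies in $Q_\delta^-$ for any base point $(x_0,t_0)\in Q_{\sigma\delta}^-$), costing one power of $(1-\sigma)^{-1}$ per derivative transferred, so that differentiating once in space yields $(1-\sigma)^{-(n+3)}\delta^{-1}$ and differentiating effectively twice (for $u_t$) yields $(1-\sigma)^{-(n+4)}\delta^{-2}$. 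The main technical nuisance, which is really the only obstacle worth flagging, is bookkeeping the precise $(1-\sigma)$ exponents through the Moser iteration and the Caccioppoli chain; everything else is standard parabolic regularity.
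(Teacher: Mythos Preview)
The paper does not supply its own proof of this lemma: it is quoted as a known result from DiBenedetto's textbook \emph{Partial Differential Equations} (reference [D], Theorem 12.2), and is used as a black box in Section~\ref{sec6}. So there is no in-paper argument to compare against.

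That said, your outline is the standard route to such interior estimates and is essentially how the textbook argument goes: derivatives of caloric functions are caloric, so one combines a sub-mean-value bound (via Moser iteration or the heat-ball representation) with a Caccioppoli inequality, and then tracks the $(1-\sigma)$ powers through a dyadic chain of nested cylinders. The one place where your sketch is a bit loose is the passage ``Cauchy--Schwarz in $L^1$--$L^2$ converts these to the desired $L^1$ estimates'': Caccioppoli naturally gives $\int_{Q_{\rho/2}^-}|\nabla u|^2 \lesssim \rho^{-2}\int_{Q_\rho^-}|u|^2$, and to land on an $L^1$ norm of $u$ on the right you should first invoke the $L^\infty$--$L^1$ sub-mean-value bound for $u$ itself (so that $\int |u|^2 \le \|u\|_\infty \int |u|$), rather than a bare Cauchy--Schwarz. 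With that adjustment the bookkeeping goes through and produces the stated exponents.
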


\begin{lemma} \label{cak} [CV, Theorem 4.1] Let $u$ be a limit
solution of ($P$), for which the initial function $u_0 $ is
nonnegative, bounded and $|\nabla u_0| \leq M$. Then for a
dimensional constant $C_0>0$
$$|\nabla u| \leq C_0 \max\{1, M\}.
$$
\end{lemma}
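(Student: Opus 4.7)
The plan is to prove the bound at the level of the $\epsilon$-regularization $(P_\epsilon)$ and then pass to the limit. One chooses the approximating initial data $u_{0\epsilon}$ with $|\nabla u_{0\epsilon}| \leq M + o(1)$, so it suffices to show that $|\nabla u_\epsilon|(x,t) \leq C_0 \max\{1,M\}$ uniformly in $\epsilon$; the $C^{1,1/2}_{loc}$ convergence of $u_\epsilon$ to $u$ recalled in the introduction then transfers the bound to the limit solution $u$.

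For the uniform estimate on $u_\epsilon$ I would use a Bernstein-type maximum principle applied to an auxiliary function of the form $\Phi_\epsilon = |\nabla u_\epsilon|^2 + G(u_\epsilon)$, with $G$ tailored to absorb the singular forcing arising in the equation for $|\nabla u_\epsilon|^2$. Differentiating the equation in $(P_\epsilon)$ yields
\[(\partial_t - \Delta)|\nabla u_\epsilon|^2 = -2|D^2 u_\epsilon|^2 + \frac{2}{\epsilon^2}\beta'(u_\epsilon/\epsilon)|\nabla u_\epsilon|^2,\]
together with a companion identity for $G(u_\epsilon)$ involving $G''(u_\epsilon)|\nabla u_\epsilon|^2$ and $G'(u_\epsilon)\beta(u_\epsilon/\epsilon)/\epsilon$. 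A careful choice of $G$, uniformly bounded because $\beta$ is supported in $[0,1]$, should convert $\Phi_\epsilon$ into a subsolution. The parabolic maximum principle then bounds $\sup \Phi_\epsilon$ by its $t=0$ trace, at most $M^2 + \|G\|_\infty$, giving $|\nabla u_\epsilon| \leq C_0 \max\{1,M\}$.

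The main obstacle is the sign change of $\beta'$ on $(0,1)$: no single smooth $G$ can make $\Phi_\epsilon$ a subsolution throughout the whole parabolic cylinder. The workaround I envisage is to split the analysis into the bulk region $\{u_\epsilon \geq \epsilon\}$, where $\beta \equiv 0$ so that $\nabla u_\epsilon$ is caloric and the maximum principle applies immediately to $|\nabla u_\epsilon|^2$, and the thin reaction layer $\{0 < u_\epsilon < \epsilon\}$. Inside the layer one would build barriers from the one-dimensional traveling profile $U_\epsilon$ solving $U'' + \frac{1}{\epsilon}\beta(U/\epsilon) = 0$ with $|U'| \to 1$ at infinity; this pins $|\nabla u_\epsilon|$ to $1 + o_\epsilon(1)$ there. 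Patching the two regions and letting $\epsilon \to 0$ along the convergent subsequence yields the claimed bound $|\nabla u| \leq C_0 \max\{1,M\}$.
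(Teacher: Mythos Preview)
This lemma is not proved in the paper: it is quoted verbatim as Theorem~4.1 of [CV], so there is no ``paper's own proof'' to compare your attempt against. That said, your overall plan --- prove the gradient bound for the regularized solutions $u_\epsilon$ by a Bernstein argument and pass to the limit --- is exactly the framework used in [CV], and your identification of the sign change of $\beta'$ as the crux is also correct.

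Where your sketch becomes genuinely incomplete is the proposed workaround. Splitting into $\{u_\epsilon\ge\epsilon\}$ and the reaction layer $\{0<u_\epsilon<\epsilon\}$ does not close the argument as written. In the layer you appeal to the one-dimensional profile $U_\epsilon$ with $U_\epsilon''+\frac{1}{\epsilon}\beta(U_\epsilon/\epsilon)=0$ to ``pin $|\nabla u_\epsilon|$ to $1+o_\epsilon(1)$'', but the first integral $|U_\epsilon'|^2+2B_\epsilon(U_\epsilon)=1$ (with $B_\epsilon(s)=\int_0^s\frac{1}{\epsilon}\beta(\sigma/\epsilon)\,d\sigma$) controls a \emph{one}-dimensional derivative, not the full $n$-dimensional gradient; turning it into a barrier for $|\nabla u_\epsilon|$ in $\mathbb R^n$ would require a priori control on the tangential derivatives and on the geometry of the level set $\{u_\epsilon=\epsilon\}$, neither of which you have at this stage. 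The ``patching'' step is then circular: to apply the maximum principle on $\{u_\epsilon\ge\epsilon\}$ you need a bound for $|\nabla u_\epsilon|$ on its parabolic boundary $\{u_\epsilon=\epsilon\}$, which is precisely what the layer argument was supposed to supply.

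The cleaner route --- and the one that actually works --- is to stay global and let the primitive $B_\epsilon$ absorb the $\beta'$ term \emph{exactly} rather than approximately. With $G=2B_\epsilon$ one computes $(\partial_t-\Delta)G(u_\epsilon)=\frac{2}{\epsilon^2}\beta(u_\epsilon/\epsilon)^2-\frac{2}{\epsilon^2}\beta'(u_\epsilon/\epsilon)|\nabla u_\epsilon|^2$, so that for $\Phi_\epsilon=|\nabla u_\epsilon|^2+2B_\epsilon(u_\epsilon)$ the $\beta'$ terms cancel identically and
\[
(\partial_t-\Delta)\Phi_\epsilon=-2|D^2u_\epsilon|^2+\tfrac{2}{\epsilon^2}\beta(u_\epsilon/\epsilon)^2
\le -\tfrac{2}{n}(\Delta u_\epsilon)^2+\tfrac{2}{\epsilon^2}\beta(u_\epsilon/\epsilon)^2.
\]
One then controls the remaining $\beta^2$ term via the equation $\Delta u_\epsilon=\partial_t u_\epsilon-\frac{1}{\epsilon}\beta(u_\epsilon/\epsilon)$ at a maximum point (or by a further weight), obtaining $\Phi_\epsilon\le M^2+1$ and hence $|\nabla u_\epsilon|^2\le M^2+1$ since $0\le 2B_\epsilon\le 1$. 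This avoids the region splitting altogether and makes the constant $C_0$ explicit.
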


\begin{lemma} \label{cp} [K, Theorem 1.3 and Theorem 2.2] Let $u$ an $v$
be, respectively, a sub- and supersolutions of (P) with  strictly
separated initial data $u_0\prec v_0$. Then the solution remain
ordered for all time, i.e.,
$$u(x,t) \prec v(x,t) \hbox{ for every }t>0.$$
\end{lemma}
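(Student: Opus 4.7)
The plan is to argue by contradiction via the first-contact time. Set $t_0 := \inf\{t > 0 : u(\cdot,t) \not\prec v(\cdot,t)\}$. Strict separation of the initial data and the continuity of $u,v$ give $t_0 > 0$; if $t_0 = \infty$ there is nothing to prove. At $t = t_0$ the strict ordering fails for the first time, so there must be a contact point $(x_0, t_0)$, and by definition $u(\cdot, t) \prec v(\cdot, t)$ for every $t < t_0$. The contact falls into one of two geometric cases, which I will handle separately.

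\textbf{Interior contact.} Suppose $x_0 \in \Omega_{t_0}(u) \cap \Omega_{t_0}(v)$ with $u(x_0, t_0) = v(x_0, t_0)$. On the open set $\Omega(u) \cap \Omega(v) \cap \{t \leq t_0\}$ both functions solve the heat equation classically, and $u \leq v$ with equality at $(x_0, t_0)$. The strong parabolic maximum principle forces $u \equiv v$ on the connected component of this set containing $(x_0,t_0)$. Propagating this identity back to $t = 0$ (either by continuation until the component reaches $\{t=0\}$, or by combining it with the boundary-touch analysis below applied to the earlier time at which the component first appeared) contradicts the strict separation of $u_0$ and $v_0$.

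\textbf{Free boundary contact.} The delicate case is $x_0 \in \Gamma_{t_0}(u) \cap \overline{\Gamma_{t_0}(v)}$. The idea is to tilt one solution slightly to create a classical strict sub- or supersolution tangent to the other, and then invoke the viscosity definition of (P) to force an inequality on $|\nabla u|$ or $|\nabla v|$ at $(x_0, t_0)$ that contradicts $|\nabla u| \leq 1 \leq |\nabla v|$. Concretely, I would consider the rescaling $v_\lambda(x,t) := \lambda\, v(x/\lambda,\, t/\lambda^2)$ with $\lambda$ slightly less than $1$; this preserves the heat equation and the gradient boundary condition but shrinks $\Omega(v)$. By strict separation one can choose $\lambda < 1$ close to $1$ so that $u_0 \prec v_{\lambda,0}$ is still valid, and then $v_\lambda$ serves as a classical barrier touching $u$ from above at $(x_0,t_0)$ with a strictly larger normal speed, contradicting the supersolution property of $u$'s free boundary motion.

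\textbf{Main obstacle.} The hard part is the free boundary contact, since $\Gamma_{t_0}(u)$ and $\Gamma_{t_0}(v)$ need not be smooth at $(x_0, t_0)$ and the free boundary condition $|\nabla u|=1$ must be read in the viscosity sense. One must inscribe a small ball in $\Omega_{t_0}(v)$ tangent to $\Gamma_{t_0}(v)$ at $(x_0, t_0)$ and compare with $u$ through radially symmetric classical barriers built from that ball, then pass to the limit $\lambda \uparrow 1$ to recover the original ordering. The strict separation of the initial data is used precisely to guarantee the uniform room needed to build these barriers on the whole interval $[0, t_0]$. Once the barrier comparison is in place, the gradient jump condition gives the desired contradiction; the technicalities of matching the viscosity formulation (respecting also possible contact at degenerate boundary points) are what fill out [K, Theorems 1.3 and 2.2].
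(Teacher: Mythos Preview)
The paper does not prove this lemma; it is listed in Section~\ref{sec2} as a preliminary result quoted verbatim from [K, Theorems~1.3 and~2.2], with no argument supplied. So there is no in-paper proof to compare against---the actual content lives in Kim's paper.

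Your overall architecture (first-contact time $t_0$, dichotomy into interior versus free-boundary contact, strong maximum principle for the former, barrier/viscosity argument for the latter) is the standard skeleton for comparison theorems of this type and is indeed the structure of the argument in [K]. The interior-contact case is handled correctly.

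There is, however, a genuine gap in your free-boundary step. The rescaling $v_\lambda(x,t)=\lambda\,v(x/\lambda,t/\lambda^2)$ preserves \emph{both} the heat equation and the condition $|\nabla v_\lambda|=1$ on the free boundary exactly, so $v_\lambda$ is again an honest solution of $(P)$, not a strict supersolution; its free boundary does not move with ``strictly larger normal speed.'' In addition, $v_\lambda$ has no reason to touch $u$ at the same point $(x_0,t_0)$---that is where $v$ touches $u$, and after rescaling the contact (if any) occurs elsewhere. So the contradiction you announce does not follow from this construction. What is actually needed, and what [K] does, is a perturbation that strictly breaks the gradient condition---for example sup-convolutions of the form $(1+c\varepsilon)\sup_{|y|\le\varepsilon}v(x+y,t)$, which enlarge the positive set and push $|\nabla\cdot|$ strictly above $1$ on the new boundary---together with a Hopf-type analysis at the first touching point in the viscosity framework. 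Your ``Main obstacle'' paragraph correctly identifies that this is where the work lies, but the specific rescaling you propose in the preceding paragraph is not the right device and should be replaced.
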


\begin{lemma} \label{pro11} [CV, Proposition 1.1] Let $T>0$. Then
there exists a self similar solution $U(x,t)$ of $(P)$ in the
 form
$$U(x,t)=(T-t)^{1/2}f(|x|/(T-t)^{1/2})$$ where the profile $f(r)$
satisfies the stationary problem
$$f'' + (\frac{n-1}{r} -\frac{1}{2}r)f' +\frac{1}{2}f=0 \hbox{ for }0<r<R, $$
$$f'(0) =0 \hbox{ and } f(r)>0 \hbox{ for } 0 \leq r <R$$
with boundary conditions
$$f(R)=0 \hbox{ and } f'(R)=-1.$$
\end{lemma}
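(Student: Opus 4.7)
The plan is to derive the ODE directly from the self-similar ansatz, then construct $f$ via a shooting argument that exploits the linearity of the resulting equation.

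First, I would substitute $U(x,t) = (T-t)^{1/2} f(\eta)$ with $\eta = |x|/(T-t)^{1/2}$ into $U_t = \Delta U$. A direct computation with the chain rule gives
$$\partial_t U = \frac{1}{2(T-t)^{1/2}}\bigl(\eta f'(\eta) - f(\eta)\bigr),$$
and since $U$ is radial in $x$,
$$\Delta U = \frac{1}{(T-t)^{1/2}}\Bigl(f''(\eta) + \frac{n-1}{\eta} f'(\eta)\Bigr).$$
Equating yields the stated ODE. The free boundary of the ansatz is $|x| = R (T-t)^{1/2}$ for some $R>0$, and the conditions $U=0$ and $|\nabla U|=1$ there translate immediately to $f(R)=0$ and $f'(R)=-1$. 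Radial smoothness at the origin forces $f'(0)=0$.

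Next I would solve the ODE on $[0,R]$ by a shooting method. The apparent singularity at $\eta=0$ is a regular singular point; a Frobenius/power-series analysis gives, for each $\alpha>0$, a unique smooth solution $f_\alpha$ on $[0,\infty)$ with $f_\alpha(0)=\alpha$ and $f_\alpha'(0)=0$. Since the equation is linear, $f_\alpha = \alpha f_1$. I would then rewrite the ODE in self-adjoint form
$$\bigl(\eta^{n-1} e^{-\eta^2/4} f'\bigr)' + \tfrac{1}{2}\, \eta^{n-1} e^{-\eta^2/4} f = 0,$$
integrate from $0$ to $\eta$, and obtain
$$\eta^{n-1} e^{-\eta^2/4} f_1'(\eta) = -\tfrac{1}{2}\int_0^\eta s^{n-1} e^{-s^2/4} f_1(s)\,ds.$$
Hence, as long as $f_1$ stays positive, $f_1'<0$ and $f_1$ is strictly decreasing.

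The main obstacle is showing that $f_1$ has a finite first zero $R$. If instead $f_1>0$ on $[0,\infty)$, then $f_1$ would decrease to some limit $L\ge 0$, and the right-hand side of the identity above would tend to a strictly negative limit (even if $L=0$, since $f_1$ must remain bounded away from $0$ on some initial interval). Dividing by $\eta^{n-1}e^{-\eta^2/4}$, which decays to $0$, forces $f_1'(\eta)\to -\infty$, so $f_1$ would become negative — a contradiction. Therefore a first zero $R\in(0,\infty)$ exists, and the identity at $\eta=R$ shows $f_1'(R)<0$ strictly.

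Finally, I would use linearity to match the flux condition: set $\alpha := -1/f_1'(R) > 0$ and define $f := \alpha f_1$. Then $f(0)=\alpha$, $f'(0)=0$, $f>0$ on $[0,R)$, $f(R)=0$, and $f'(R)=-1$, producing the required self-similar solution $U$. Uniqueness up to this single normalization is automatic from the linear structure of the ODE together with the two boundary conditions.
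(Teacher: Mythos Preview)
Your argument is correct: the derivation of the profile ODE, the self-adjoint reformulation, the integrated identity yielding monotonicity, the blow-up of $f_1'$ forcing a finite first zero, and the linear rescaling to hit $f'(R)=-1$ are all sound. One minor wording point: in the contradiction step you don't actually need the clause ``even if $L=0$, since $f_1$ must remain bounded away from $0$ on some initial interval''---the integral $\int_0^\eta s^{n-1}e^{-s^2/4}f_1(s)\,ds$ is positive and increasing regardless, so it has a positive (possibly finite) limit, which is all you use.

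As for comparison: the paper does not supply its own proof of this lemma. It is listed in Section~\ref{sec2} among the preliminary results quoted from the literature, with the citation [CV, Proposition~1.1], and is used as a black box thereafter (for instance to define the constants $a_1,a_2$ in \eqref{a1}). Your write-up therefore goes well beyond what the paper itself contains; it is essentially a reconstruction of the Caffarelli--V\'azquez argument, and is a perfectly acceptable self-contained proof.
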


\begin{lemma} \label{ghv} [GHV, Theorem 6.6]
Let $u$ be a radial solution of ($P$) with initial data
$u_0=u_0(|x|)>0$ supported in a ball. Then
$$(T-t)^{-1/2} u(|x|,t) \rightarrow f(|x|/(T-t)^{1/2}) \hbox{ uniformly} $$
as $t \rightarrow T$ with $f$ given as in Lemma~\ref{pro11}.

\end{lemma}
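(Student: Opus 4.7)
The natural approach is to pass to self-similar variables. Set
\begin{equation*}
\tau = -\ln(T-t), \quad y = \frac{x}{(T-t)^{1/2}}, \quad v(y,\tau) = (T-t)^{-1/2}\, u(x,t).
\end{equation*}
Under this change of variables the rescaling factors cancel in the spatial gradient, so the free boundary conditions become $v=0$ and $|\nabla_y v|=1$, while the heat equation transforms into the Fokker-Planck type equation $v_\tau = \Delta_y v - \tfrac{1}{2} y\cdot\nabla_y v + \tfrac{1}{2} v$ on $\{v>0\}$. The profile $f$ from Lemma~\ref{pro11} is exactly a radial, $\tau$-independent solution of this rescaled problem, so the claim reduces to showing $v(\cdot,\tau)\to f$ as $\tau\to\infty$ for every radial $v$.

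Writing $u=u(r,t)$ with radial free boundary $r=R(t)$, set $S(\tau)=R(t)/(T-t)^{1/2}$. The first step is to establish uniform two-sided bounds $0<c\le S(\tau)\le C$ and a corresponding uniform interior $C^1$-bound on $v$. I would do this by sandwiching $u$ between two self-similar barriers obtained from Lemma~\ref{pro11} (one shrunken so as to lie below $u(\cdot,t_0)$ and one enlarged to dominate it, both arranged to extinguish at time $T$) via the comparison principle of Lemma~\ref{cp}, and then applying the boundary regularity and nondegeneracy estimates of Lemmas~\ref{ca}--\ref{caf421} together with the gradient bound of Lemma~\ref{cak}. These bounds let one extract, along any sequence $\tau_k\to\infty$, a locally uniform subsequential limit $v(\cdot,\cdot+\tau_k)\to w$, where $w$ is a bounded radial \emph{ancient} solution of the rescaled problem on $\R^n\times\R$.

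The heart of the argument is to show $w$ is $\tau$-independent, since then uniqueness of the ODE profile forces $w=f$, and independence of the subsequential limit upgrades this to convergence of the full family $v(\cdot,\tau)$ to $f$. Uniqueness of $f$ subject to the boundary data $f'(0)=0$, $f(R)=0$, $f'(R)=-1$ is a standard shooting argument for the second order linear ODE in Lemma~\ref{pro11}. Stationarity of $w$ I would obtain via a Lyapunov functional adapted to the Gaussian weight,
\begin{equation*}
\mathcal{E}[v] = \int_{\{v>0\}} \left( \tfrac{1}{2}|\nabla v|^2 - \tfrac{1}{4} v^2 \right) e^{-|y|^2/4}\, dy,
\end{equation*}
which along the rescaled flow should satisfy $\tfrac{d}{d\tau}\mathcal{E} = -\int v_\tau^2\, e^{-|y|^2/4}\, dy$ modulo boundary terms that cancel by virtue of $|\nabla v|=1$ and the kinematic condition on the moving front.

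The main obstacle is the rigorous derivation of this monotonicity at the free boundary level: differentiating $\mathcal{E}$ in $\tau$ produces boundary contributions involving $|\nabla v|$ and the normal velocity $\dot S$, and one must check these have a definite sign using the free boundary condition $|\nabla v|=1$ together with the identity $v_\tau = -\dot S\,|\nabla v|$ on $\partial\{v>0\}$. In the radial setting an alternative, likely cleaner, route is available: reduce to the one-dimensional ODE system for the pair $(v(\cdot,\tau),S(\tau))$ and iterate the dyadic pinching used elsewhere in the paper, sandwiching $u$ at each step between two Lemma~\ref{pro11}-type barriers whose supports differ by a factor tending to $1$, and conclude uniform convergence to $f$ directly by a strict comparison argument, bypassing the Lyapunov functional entirely.
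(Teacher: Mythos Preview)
The paper does not contain a proof of this lemma; it is listed in Section~\ref{sec2} among the preliminary results quoted from the literature, with the explicit attribution [GHV, Theorem~6.6]. So there is no in-paper argument to compare against.

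That said, your outline is the standard and essentially correct route to this result, and is close in spirit to what Galaktionov--Hulshof--V\'azquez actually do: pass to self-similar variables, obtain uniform a priori bounds on the rescaled solution and its free boundary radius, use compactness to extract an eternal solution of the rescaled problem, and then argue that this limit must be the stationary profile $f$. Two comments. First, your proposed barrier argument for the uniform bounds $c\le S(\tau)\le C$ is the right idea, but note that arranging a self-similar subsolution and a self-similar supersolution that both extinguish exactly at time $T$ requires a little care; GHV handle this by a separate analysis of the radial ODE/interface system rather than by pure comparison. Second, the Lyapunov functional you wrote down is natural, and the cancellation of boundary terms via $|\nabla v|=1$ and the kinematic condition does work in the radial classical setting, but making this rigorous at the level of weak/limit solutions is exactly the delicate step; GHV instead exploit the one-dimensional radial structure directly (monotonicity and ODE-type arguments for the interface), which is closer to the second route you mention at the end. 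Your dyadic pinching alternative is viable and would dovetail with the techniques used elsewhere in this paper, though it is not how GHV proceed.
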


\begin{lemma} \label{AW} [AW, Theorem 8.4]
Let $(u, \chi)$ be a domain variation solution of $(P)$ in
$Q_\rho:=Q_\rho(0,0)$ such that $(0,0) \in \partial \{u>0\}$. There
exists a constant $\sigma_1>0$ such that if $u(x,t)=\chi(x,t)=0$
when $(x,t) \in Q_\rho^-$ and $x_n \geq \sigma \rho$, and if
$|\nabla u| \leq 1 +\tau$ in $Q_\rho^-$  for some $\sigma\leq
\sigma_1$ and $\tau \leq \sigma_1 \sigma^2$, then the free boundary
$\partial \{u>0\}$ is in $Q_{\rho/4}^-$ the graph of a
$C^{1+\gamma,\gamma}$-function; in particular the space normal is
H\"{o}lder continuous in $Q_{\rho/4}^-$.
\end{lemma}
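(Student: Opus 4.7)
The plan is to decompose $(0,T)$ into the dyadic intervals $[t_k,t_{k+1}]$ (recalling $t_{k+1}-t_k=(T-t_k)/2$) and run a bootstrap between two geometric quantities: the flatness $\varepsilon_k := 1-r^{in}(t)/r^{out}(t)$ attained on $[t_k,t_{k+1}]$, and the oscillation of $u$ from a suitable radial profile on a thick interior annulus. In scaled variables the correct length is $\sqrt{T-t}$ by Lemma~\ref{pro11}. I would prove by induction on $k$ that $\varepsilon_k\le Ch^k\alpha$ for some universal $0<h<1$; this is statement (i). Parts (ii) and (iii) then follow essentially automatically from (i) together with Lemmas~\ref{ghv} and~\ref{AW}.

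The base case is the content of \S3--\S4. Using the shrinking-support hypothesis (\ref{shrink}) together with Lemma~\ref{cak}, one shows $r^{in}(t)\approx r^{out}(t)\approx\sqrt{T-t}$ on $[T/2,T)$, and that $\Omega^{in}$ is Lipschitz at each dyadic scale so that Lemmas~\ref{ca}--\ref{caf421} apply. Since $\|\rho\|_\infty\le\alpha$ and $\rho$ has small angular period, one can then sandwich $u_0$ between a radial subsolution $\underline{\phi}_0$ and a radial supersolution $\bar\phi_0$ of (P) whose free-boundary radii differ by $O(\alpha)$; Lemma~\ref{cp} propagates the sandwich to all later times and delivers $\varepsilon_1\lesssim\alpha$ at $t=t_2$.

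The inductive step splits into two half-steps. First (\S\S5--6), given flatness $\varepsilon_k$ on $[t_k,t_{k+1}]$, I would compare $u$ on $B_{r^{in}(t)}(0)$ with the radial caloric function $\phi$ whose boundary value on $\partial B_{r^{in}(t)}(0)$ is the spherical average of $u$ there; the small angular period of $\rho$ (second use of the hypothesis) controls the oscillation of $u$ on that inner sphere, and Lemmas~\ref{ca}--\ref{Di262} then yield $|u-\phi|\lesssim h\varepsilon_k\sqrt{T-t}$ on a slightly thinner annulus $\{|x|\le(1-\eta)r^{in}(t)\}$, where the contraction $h<1$ comes from parabolic regularity in Lipschitz domains. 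Second (\S7), this improved interior control becomes effective inner data for a refined pair of radial barriers $\phi^\pm$ of (P) run on the next two dyadic intervals: imposing $|\nabla\phi^\pm|=1$ on their free boundaries forces their free-boundary radii to differ by $Ch\varepsilon_k\sqrt{T-t}$, and Lemma~\ref{cp} gives $\varepsilon_{k+2}\le h\varepsilon_k$, closing the induction.

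With (i) in hand, for (ii) I would take $v_k$ to be the radial solution of (P) starting at time $t_k$ from the spherical symmetrization of $u(\cdot,t_k)$: (i) controls the initial separation in the supremum norm, Lemma~\ref{cp} transfers it to all of $[t_k,T)$, and Lemma~\ref{ghv} fixes the $\sqrt{T-t}$ scale. For (iii), at any $(x_0,t_0)\in\Gamma(u)$ with $t_0>t_{k_0}$ I would apply Lemma~\ref{AW} in a parabolic cube $Q_\rho(x_0,t_0)$ with $\rho\sim\sqrt{T-t_0}$: (i) supplies the flatness input $\sigma\sim h^k\alpha$, while (ii) together with Lemma~\ref{cak} gives $|\nabla u|\le 1+\tau$ with $\tau\lesssim h^k\alpha$; the AW condition $\tau\le\sigma_1\sigma^2$ then reduces to $h^{k_0}\lesssim 1/\alpha$, which is the stated threshold on $k_0$. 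The main obstacle is the boundary-propagation half-step in \S7, where a lossy interior estimate must produce a \emph{strict} contraction of free-boundary flatness: the radial barriers must be matched simultaneously to the focusing speed $\sim 1/\sqrt{T-t}$, to the free-boundary slope $|\nabla|=1$, and to the small oscillation of $u$ on an inner sphere, and it is here that the assumed small period of $\rho$ is used decisively.
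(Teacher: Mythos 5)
Your proposal does not address the statement you were asked to prove. The statement is Lemma~\ref{AW}, i.e.\ the criterion of Andersson and Weiss (Theorem 8.4 of [AW]) asserting that for a domain variation solution $(u,\chi)$, one-sided flatness of the positivity set ($u=\chi=0$ above the plane $x_n=\sigma\rho$ in $Q_\rho^-$) combined with the gradient bound $|\nabla u|\le 1+\tau$, $\tau\le\sigma_1\sigma^2$, forces the free boundary to be a $C^{1+\gamma,\gamma}$ graph in $Q_{\rho/4}^-$. In the paper this is a quoted external result, listed among the preliminary lemmas and never proved there; a genuine proof would require the machinery of [AW] for the parabolic Bernoulli-type problem (the notion of domain variation solutions, the associated monotonicity/energy arguments, and an improvement-of-flatness iteration at the level of the free boundary condition $|\nabla u|=1$ yielding H\"older continuity of the space normal). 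None of that appears in your write-up.

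What you have sketched instead is the proof of Theorem~\ref{thm1}: the dyadic decomposition of $(0,T)$, the bootstrap between boundary flatness $1-r^{in}/r^{out}$ and interior radial approximation, and the deduction of asymptotic self-similarity and boundary regularity. Moreover, in your step for part (iii) you explicitly invoke Lemma~\ref{AW} with $\sigma\sim h^k\alpha$ and $\tau\lesssim h^k\alpha$ as a black box; as an argument for Lemma~\ref{AW} itself this is circular. So, judged against the requested statement, the proposal has a fundamental gap: it proves a different theorem and assumes the one in question. (As a sketch of the paper's proof of Theorem~\ref{thm1} it is broadly aligned with Sections 3--8, but that is not what was asked.)
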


\section{Estimate on inner and outer radii of $\Omega$}
\label{sec3}

If $v$ is a self-similar solution (see Lemma~\ref{pro11}) with an
extinction time $T$, then the maximum of $v$ at time $t$ and the
radius of its support $\Omega_t(v)$ are constant multiples of
$\sqrt{T-t}$, i.e., there are dimensional constants $a_1$ and $a_2$ such that 
\begin{equation} \label{a1} 
\max v(\cdot,t) = a_1 \sqrt{T-t}\,\, \hbox{ and }  \,\, \Omega_t(v)
=B_{a_2\sqrt{T-t}}(0).
\end{equation}

 In this section, we prove analogous estimates, Lemma~\ref{lemma1}, on $\max
u(\cdot, t)$ and on inner, outer radii of concentric spheres which
trap the free boundary $\Gamma_t(u)$ in between.  Recall that
$r^{in}(t)$ and $r^{out}(t)$ are inner and outer radii of
$\Omega_t(u)$, i.e.,
$$r^{in}(t)= \sup \{r: B_r(0) \subset \Omega_t(u)\}$$
and
$$r^{out}(t)= \inf \{R: \Omega_t(u) \subset B_R(0)  \}.$$

 \begin{figure}
 \centering
 \includegraphics[width=0.7\textwidth]{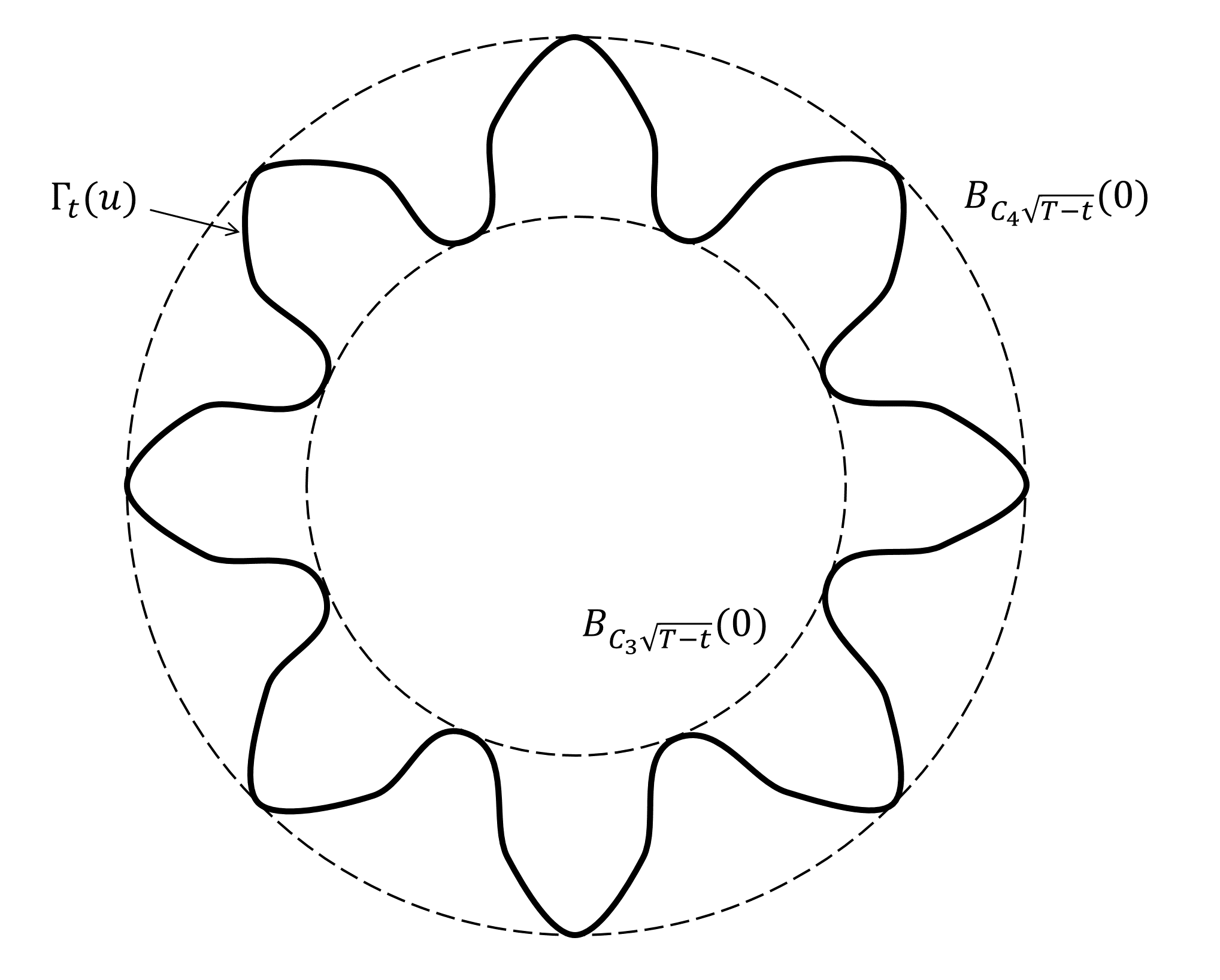} \caption{ (\ref{rt}) of  Lemma~\ref{lemma1}}
 \end{figure}

\begin{lemma} \label{lemma1} There is a constant $\alpha(n,M)>0$ depending only on $n$ and $M$ such that if
 $u_0$ is given as in Theorem~\ref{thm1} with $ \alpha <
\alpha(n, M)$, then 
\begin{equation} \label{maxu}
C_1\sqrt{T-t} \leq \max u(\cdot,t) \leq C_2\sqrt{T-t}
\end{equation}
 and
\begin{equation} \label{rt}
C_3\sqrt{T-t} \leq r^{in}(t) \leq r^{out}(t) \leq C_4\sqrt{T-t}.
\end{equation}
for constants $C_j$ ($1\leq j \leq 4$) depending only on $n$ and $M$. In fact, we can take $\alpha(n,M) =C(n)/M^9 $, $C_1= C(n)/M^5$, $C_2=C(n)M$, $C_3=C(n)/M$ and $C_4=C(n) M^3$, where $C(n)$ are dimensional constants.
\end{lemma}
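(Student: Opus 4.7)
The strategy is to sandwich $u$ between two radial self-similar barriers constructed from Lemma~\ref{pro11}, using the comparison principle (Lemma~\ref{cp}). The shrinking support condition (\ref{shrink}) is first propagated in time to give the global monotonicity $u_t\le 0$ in $\Omega(u)$, so the positive set $\Omega_t(u)$ is nonincreasing in $t$; this yields the crude containment $\Omega_t(u)\subset B_M(0)$ and the crude bound $\|u(\cdot,t)\|_\infty\le\|u_0\|_\infty\le CM^2$ for all $t\in(0,T)$. The quantitative $\sqrt{T-t}$ scaling is then obtained by placing radial sub- and super-barriers at every time $t_0\in(0,T)$ and reading off the extinction time of $u$ via Lemma~\ref{cp}.

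For the upper inequalities in (\ref{maxu}) and (\ref{rt}), I choose the self-similar solution $\bar U(x,t)=\sqrt{\bar T-t}\,f(|x|/\sqrt{\bar T-t})$ of Lemma~\ref{pro11} with the smallest $\bar T>0$ such that $\bar U(\cdot,0)\ge u_0$ pointwise; the crude bounds $\|u_0\|_\infty\le CM^2$ and $\{u_0>0\}\subset B_M(0)$ let me take $\sqrt{\bar T}\le CM^2$. By Lemma~\ref{cp}, $u\le\bar U$ on $[0,T]$, so $T\le\bar T\le CM^4$ and $\max u(\cdot,t)\le f(0)\sqrt{\bar T-t}$, $r^{out}(t)\le R\sqrt{\bar T-t}$. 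To upgrade from $\sqrt{\bar T-t}$ to the sharp $\sqrt{T-t}$, I restart the construction at an arbitrary $t_0\in(0,T)$: let $\bar T_{t_0}$ be the smallest extinction time for which the corresponding self-similar profile started from $t_0$ dominates $u(\cdot,t_0)$. Monotonicity of $\Omega_t(u)$ together with the global gradient bound of Lemma~\ref{cak} forces both $r^{out}(t_0)$ and $\|u(\cdot,t_0)\|_\infty$ to be controlled by a constant times $\sqrt{\bar T_{t_0}-t_0}$, while $T\le\bar T_{t_0}$ from Lemma~\ref{cp}. A dyadic iteration collapses the two scales and yields the sharp upper bound with $C_2\asymp M$, $C_4\asymp M^3$.

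For the lower inequalities I build a radial sub-barrier from $\phi_0$. Since $\max\phi_0\ge 1/M$, $\phi_0$ is radial and its positive set is a ball in $B_1(0)$, the gradient bound $|\nabla\phi_0|\le CM$ produces a radial annulus of width $\gtrsim 1/M^2$ on which $\phi_0\ge c/M$; smoothing this into a radial subsolution of $(P)$ gives $\underline U\le\phi_0\le u_0$ with an explicit lower bound $\underline T\ge c/M^{p}$ on its extinction time. By Lemma~\ref{cp}, $u\ge\underline U$ on $[0,\underline T]$, which yields $T\ge\underline T$ and the lower bounds $\max u(\cdot,t),\,r^{in}(t)\ge c\sqrt{\underline T-t}$ on $[0,\underline T]$. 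The restart-at-$t_0$ machinery from the previous step, now applied from below, converts the $\sqrt{\underline T_{t_0}-t_0}$-bounds into $\sqrt{T-t_0}$-bounds. The smallness $\|\rho\|_\infty\le\alpha$ with $\alpha\le C(n)/M^9$, together with the angular periodicity of $\rho$ and the simple connectedness of $\{u_0>0\}$ (which rules out splitting of the main component), enter exactly here: they guarantee that $u(\cdot,t_0)$ sits close enough to a radial profile centred at the origin for a purely radial sub-barrier still to see a definite fraction of the positive set.

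The main obstacle is this last upgrade: extracting the clean $\sqrt{T-t}$ factor from barriers whose natural scales $\sqrt{\underline T-t}$ and $\sqrt{\bar T-t}$ are a priori distinct, without already invoking the scaling one is trying to prove. My plan is a bootstrap on the dyadic decomposition $t_k=(1-2^{-k})T$: at each step the simple-connectedness and the a priori gradient bound of Lemma~\ref{cak} confine $u(\cdot,t_k)$ within a $C\alpha$-neighbourhood of a radial profile centred at $0$, so that the sub- and super-barriers at time $t_k$ have extinction times that differ by a uniformly bounded ratio, forcing both to be comparable to $T-t_k$. Tracking constants through this bootstrap produces the announced powers $C_1\asymp 1/M^5$, $C_3\asymp 1/M$ and the explicit $\alpha(n,M)\asymp 1/M^9$.
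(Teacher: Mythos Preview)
Your plan handles the two ``easy'' inequalities ($\max u(\cdot,t)\le C_2\sqrt{T-t}$ and $r^{in}(t)\ge C_3\sqrt{T-t}$) correctly by comparison with self-similar solutions, and indeed the paper does essentially this (more directly: it compares at the given time $t$ rather than from $t=0$, avoiding any restart). The gap is in the other two inequalities. Barrier comparison alone cannot give $r^{out}(t)\le C_4\sqrt{T-t}$: a self-similar super-barrier covering $\Omega_t(u)$ has extinction time $\bar T_{t_0}\ge T$, and your restart only yields $r^{out}(t_0)\le R\sqrt{\bar T_{t_0}-t_0}$, which is the wrong way around. Your ``dyadic iteration collapses the two scales'' is circular: to place a sub-barrier at $t_0$ that sees a definite fraction of the positive set you already need quantitative control of $u(\cdot,t_0)$ of exactly the type you are proving, and nothing in the comparison principle forces $\bar T_{t_0}-t_0$ and $T-t_0$ to be comparable.

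The ingredient you are missing is the free boundary condition $|\nabla u|=1$, which you never invoke. The paper's argument for the hard pair runs as follows. On each dyadic block $[t_i,t_{i+1}]$ one builds a space--time region $\Sigma$ with Lipschitz (in parabolic scaling) lateral boundary that contains $\Omega(u)$ and touches $\Gamma(u)$ at some $(p,\tau)$; the caloric majorant $w\ge u$ in $\Sigma$ then satisfies $|\nabla w(p,\tau)|\ge 1$. By the Lipschitz-domain ``almost harmonic'' estimate (Lemma~\ref{ca}) this forces a \emph{lower} bound $\max w(\cdot,\tau)\gtrsim \sqrt{T-t_i}$, hence the lower bound on $\max u$. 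For $r^{out}$ one argues by contradiction in the annulus $B_{r^{out}}\setminus B_{r^{in}}$: simple connectivity of $\{u_0>0\}$ together with $u_t\le 0$ guarantees that every sphere $\partial B_s$ with $r^{in}\le s\le r^{out}$ meets the zero set, and then angular periodicity with period $\le\alpha$ plus $|\nabla u|\le C_0M$ gives $\max_{\partial B_s}u\le C M\alpha\, r^{out}$. If $r^{out}$ were too large this small upper bound is incompatible with the $|\nabla u|=1$-driven lower bound just obtained, provided $\alpha\le C(n)/M^9$. This is precisely where the periodicity, the simple connectivity, and the explicit threshold on $\alpha$ enter; your description of their role (``$u$ sits close to a radial profile'') is not the mechanism and would not close the argument.
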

\begin{proof}
We  prove the last inequality of (\ref{maxu}) and the first
inequality of (\ref{rt}) by   comparison with self similar
solutions.  Then using these inequalities, we prove the first and
the last inequality of (\ref{maxu}) and (\ref{rt}). 

\vspace{0.1in} 1. {\it Proof of } $\max u(\cdot,t) \leq
C_2\sqrt{T-t}$ ; \,\,\,  $C_3\sqrt{T-t} \leq r^{in}(t)$: Let $T$ be
the extinction time of $u$ and let $v$ be a
self-similar solution vanishing at time $T$.  Then 
$$\max v(\cdot,t) = a_1 \sqrt{T-t}\,\, \hbox{ and }  \,\, \Omega_t(v)
=B_{a_2\sqrt{T-t}}(0).$$ To find an upper bound on $ u(\cdot,t)$,
suppose that for some $x_0 \in \Omega_t(u)$
$$u(x_0,t) \geq (2a_1 + C_0Ma_2 )\sqrt{T-t}$$
where $C_0$ is a dimensional constant given as in Lemma~\ref{cak}. Then by
Lemma~\ref{cak}
 $$u(\cdot,t) \geq 2a_1 \sqrt{T-t} = 2 \max v(\cdot,t) $$   on  $B_{a_2\sqrt{T-t}
 }(x_0) = \Omega_t(v(x-x_0, t)).$
By  comparing $u$ with $v(x-x_0, t)$,  $$\max u(x,T)
> \max v(x-x_0,T) =0$$ which would contradict that $u$ vanishes at time
$T$. Hence we obtain
\begin{equation} \label{seconi}
\max u(\cdot,t) \leq C_2\sqrt{T-t}
\end{equation}
with $C_2=2a_1 + C_0Ma_2$. The first inequality of (\ref{rt}), that
is $C_3\sqrt{T-t} \leq r^{in}(t)$, can be proved similarly  for
$C_3=a_1/C_0(M+1)$  by comparing
 $u$ with a self-similar solution.

\vspace{0.1in}

2. {\it Proof of } $C_1\sqrt{T-t} \leq \max u(\cdot,t)$ {\it and }
$r^{out}(t) \leq C_4 \sqrt{T-t}$:  These inequalities will be proved
simultaneously by induction.  Let $C_4$ be a
constant depending on $M$ and $n$, which will be determined later.
Recall that $0<t_1<t_2<...<T$ is a dyadic decomposition of $(0,T)$
with $t_1=T/2$ and $t_{i+1}-t_i = (T-t_i)/2$.

\vspace{0.1 in}

{\it Claim 1. Suppose
\begin{equation} \label{C4}
r^{out}(t_i) \leq C_4 \sqrt{T-t_i}
\end{equation}
 for some $i \in \N$, then
$$\max u(\cdot, t_i) \geq \frac{C_n C_3^2}{C_4} \sqrt{T-t_i}$$
where $C_n$ is a positive dimensional constant and $C_3$ is the
constant as in the first inequality of (\ref{rt}).} For the proof of
Claim 1, we construct a Lipschitz region $\Sigma$ in $\R^n \times
[t_i, t_{i+1}]$. Since $r^{out}(t)$ is decreasing in time $t$, there
exists a decreasing function $\sigma(t)$ on $[t_i, t_{i+1}]$ such
that
\begin{itemize}
\item[(a-1)]
$\sigma(t) \geq r^{out}(t)$
\item[(a-2)]
 $\sigma(\tau)= r^{out}(\tau)$ for some $\tau \in [(t_i+t_{i+1})/2,
t_{i+1}]$
\item[(a-3)]
 $|\sigma'(t)| \leq \displaystyle{\frac{2(r^{out}(t_i)- r^{out}(t_{i+1}))}{t_{i+1}-t_i}}$.
\end{itemize}
(We can construct $\sigma(t)$ so that it is  linear on
$[(t_i+t_{i+1})/2, t_{i+1}]$ with slope $\displaystyle {
-2(r^{out}(t_i)- r^{out}(t_{i+1}))/(t_{i+1}-t_i)}$ and it is a
constant on $[t_i, (t_i+t_{i+1})/2]$.) Let $\Sigma$ be a space-time
region in $\R^n \times [t_i, t_{i+1}]$ such that its time cross
section is a ball of radius $\sigma(t)$ centered at $0$, i.e.,
$$
\Sigma_t  = B_{\sigma(t)}(0) $$
 for  $t_i \leq t \leq
t_{i+1}$. Then the properties (a-1), (a-2) and (a-3)  imply
\begin{itemize}
\item[(b-1)]  $\Omega(u) \cap  \{t_i \leq t \leq t_{i+1}\} \subset
\Sigma$
\item[(b-2)]  There exists a free boundary point
$$p \in \partial B_{\sigma(\tau)}(0) \cap \Gamma_\tau(u),$$
i.e., $(p,\tau) \in \partial \Sigma \cap \Gamma(u)$.
\item[(b-3)] $\Sigma$  is Lipschitz in space and time with a Lipschitz constant
$$L := \frac{2 \sigma(t_i)}{t_{i+1}-t_i}$$
\end{itemize}
where  (b-2) follows from (a-2) since $\partial B_{r^{out}(t)}(0)$
intersects $\Gamma_t(u)$ for all $t$.

Define a function $w_{t_i}(x)$ on $B_{\sigma(t_i)}(0)$ by
$$w_{t_i}(x) =  \left\{\begin{array} {lll} \max u (\cdot, t_i) &\hbox{ for }& x \in
B_{\sigma_0}(0)\\ \\
 \max u(\cdot, t_i) -C_0M|x| &\hbox{ for }& x \in
B_{\sigma(t_i)}(0) -B_{\sigma_0}(0)\end{array}\right.$$ where $C_0$
is the constant as in Lemma ~\ref{cak} and $\sigma_0$ is chosen so
that $w_{t_i}=0$ on $\partial B_{\sigma(t_i)}(0)$. Then by
Lemma~\ref{cak} and by $\Omega_{t_i}(u) \subset B_{r^{out}(t_i)}(0)
\subset B_{\sigma(t_i)}(0)$,
$$u(\cdot, t_i) \leq w_{t_i}(\cdot).$$
Let  $w(x,t)$ be a caloric function in $\Sigma$ such that
$$
\left\{\begin{array} {lll} \Delta w = w_t &\hbox{ in } & \Sigma \\
\\
w= w_{t_i}  &\hbox{ on } & \{t=t_i\} \\ \\
  w= 0 &\hbox{ on } & \partial \Sigma \cap \{t_i< t< t_{i+1} \}.
\end{array}\right.$$
Then by comparison, $w \geq u$ in $\Sigma$.   Since $w(p, \tau)=u(p,
\tau)=0$, the inequality $w \geq u$  implies
\begin{equation} \label{1}
|\nabla w (p, \tau)| \geq 1.
\end{equation}

Denote
$$\sigma(t_i) = \beta \sqrt{t_{i+1}-t_i} $$
for some $\beta>0$. Then
\begin{equation} \label{defL}
L= \frac{2 \beta} {\sqrt{t_{i+1} -t_i}}.
\end{equation}
Also observe that $r^{out}(t_i) \leq \sigma(t_i) \leq 2
r^{out}(t_i)$ by the construction of $\sigma(t)$. This implies
\begin{equation} \label{c3b}
 C_3 <  \beta < 4 C_4
 \end{equation}
where the first inequality follows from the first inequality of
(\ref{rt}) and the last inequality follows from the assumption
(\ref{C4}). Here we assume that $C_3 \leq 1$ without loss of
generality.

Since $\partial \Sigma =\partial\{w(x,t)>0\}$ has a Lipschitz
constant $L$, the caloric function
$$\tilde{w}(x,t) :=w(\frac{x}{L}, \tau + \frac{t}{L^2})$$ has a Lipschitz boundary
with Lipschitz constant $1$ in the region $$B_{C_3 \beta}(Lp) \times
[ -\beta^2,0].$$ Since $\beta>C_3$ and $C_3 \leq 1$, $\tilde{w}$ has
a Lipschitz boundary in a smaller region $$Q_{C_3^2}^-(Lp,0):=
B_{C_3^2}(Lp) \times [-C_3^4, 0]$$ with a Lipschitz constant $1$.
Hence by Lemma~\ref{ca}, $\tilde{w}(\cdot,0)$  is almost harmonic
near the vanishing Lipschitz boundary $\partial
B_{Lr^{out}(\tau)}(0)$.
 More precisely, there exists a constant $0<C_n<1$ depending on $n$
such that the following holds: if $h$ is a harmonic function in the
annulus $B_{Lr^{out}(\tau)}(0) - B_{Lr^{out}(\tau) -C_nC_3^2}(0)$
with
$$
h=\left\{\begin{array} {lll} 0 &\hbox{ on } & \partial
B_{Lr^{out}(\tau)}(0)\\ \\
 2\tilde{w}(\cdot, 0) &\hbox{ on } &\partial
B_{Lr^{out}(\tau) -C_nC_3^2}(0)
\end{array}\right.$$
then on $B_{Lr^{out}(\tau)}(0) - B_{Lr^{out}(\tau) -C_nC_3^2}(0)$
\begin{equation} \label{2}
 \tilde{w}(\cdot, 0) \leq h(\cdot).
\end{equation}
Combining (\ref{1}) and (\ref{2}), we obtain
\begin{equation} \label{nablah}
|\nabla h (Lp)| \geq |\nabla \tilde{w}(Lp,0)| =|\nabla w(p,\tau)|/L
\geq 1/L.
\end{equation}
 This implies
\begin{eqnarray} \label{3}
\max u(\cdot, t_i) &=& \max w(\cdot, t_i) \nonumber\\
&\geq& \max w(\cdot, \tau) \nonumber\\
&=& \max \tilde{w}(\cdot, 0) \nonumber\\
  &\geq& \max \{\tilde{w}(x, 0): x \in \partial B_{Lr^{out}(\tau) -C_nC_3^2}(0)\}
  \nonumber \\
  &=& \max h/2 \nonumber \\
  &\geq & C_nC_3^2/L \nonumber \\
&\geq & \frac{C_nC_3^2}{C_4} \sqrt{T-t_i}
\end{eqnarray}
where
 the third inequality follows from (\ref{nablah}) for a dimensional
 constant $C_n$
 and the last inequality follows from (\ref{defL}),  (\ref{c3b}) and $T-t_i =2(t_{i+1}-t_i)$.

\vspace{0.2 in}

{\it Claim 2. Suppose $r^{in}(t_i) \leq (C_4/8) \sqrt{T-t_i}$ for
$1 \leq i \leq k$, then
\begin{equation} \label{Cl2}
r^{out}(t_i) \leq C_4 \sqrt{T-t_i}
\end{equation}
for $1 \leq i \leq k$.} We prove Claim 2 by induction. Let $x_0$ be a
point in $\Omega_0(u)$ such that $\max u_0=u_0(x_0)$. Since $\max
u_0 \geq 1/M$ and $|\nabla u_0| \leq M$,
$$ u_0 \geq \frac{1}{2M} \,\, \hbox{ on }\,\,  B_{1/2M^2}(x_0).$$
 Then by comparing $u$ with a self similar solution,
\begin{equation}\label{tc}
T \geq \frac{C_n}{M^4}.
\end{equation}
 Hence
$$r^{out}(t_1) \leq r^{out}(0) \leq M \leq C M^3 \sqrt{T-t_1} \leq C_4 \sqrt{T-t_1}$$
where the first inequality follows since $\Gamma_t(u)$ shrinks in
time, the second inequality follows from the assumption on $u_0$, 
the third inequality follows from (\ref{tc}) for a dimensional constant $C>0$ and the last inequality follows if we define 
\begin{equation} \label{c4}
C_4 = C(n)M^3 
\end{equation} for a large dimensional constant $C(n)$. ($C(n)$ will be chosen in Claim 3 below.)

Now suppose that (\ref{Cl2}) holds for $i \in \{1,...,j\}$ where $j
\leq k-1$. Construct a Lipschitz region $\Sigma$ in $\R^n \times
[t_j,t_{j+1}]$ as in the proof of Claim 1 so that $$\Omega(u) \cap
\{t_j \leq t \leq t_{j+1}\} \subset \Sigma \hbox{,  }\,\,\, (p,
\tau) \in \Gamma(u) \cap
\partial \Sigma.$$ If $r^{out}(t_{j+1}) > C_4 \sqrt{T-t_{j+1}}$, then
\begin{equation} \label{rtau}
r^{out}(\tau) \geq r^{out}(t_{j+1}) > \frac{C_4}{2} \sqrt{T-\tau} >
\frac{C_4}{4} \sqrt{T-t_j} \geq 2r^{in}(t_j)
\end{equation}
where the last inequality follows from the assumption on
$r^{in}(t_j)$. Let
$$
\tilde{\Sigma}:= \Sigma - \Omega^{in}
$$
 where $\Omega^{in}$ is the region
constructed  in Section~\ref{sec2}, i.e.,  $\Omega^{in}$ is the
maximal radial region inscribed in $\Omega(u)$. Then by (\ref{rtau})
and (b-3), $\tilde{\Sigma}$ is Lipschitz in the large cube $
Q_{C_4\sqrt{T-\tau}/4}^-(p,\tau)$. Let $v(x,t)$  solve
$$
\left\{\begin{array} {lll} \Delta v = v_t &\hbox{ in } &
\tilde{\Sigma} \\ \\
v= \max_{\tilde{\Sigma}_{t_j}} u(\cdot, t_j)  &\hbox{ on } &
\{t=t_j\} \\ \\
  v= 0 &\hbox{ on } & \partial \Sigma \cap \{t_j< t< t_{j+1} \} \\
  \\
  v(\cdot, t)= \max_{\partial B_{r^{in}(t)}(0)} u(\cdot, t)
  &\hbox{ on } & \partial \Omega^{in} \cap \{t_j< t< t_{j+1}\}.
\end{array}\right.$$
Then by comparison, $v \geq u$ in $\tilde{\Sigma}$. By a similar
argument as in the proof of (\ref{3}) with  $w$ replaced by $v$, and
with (\ref{C4}) replaced by the assumption $r^{out}(t_j) \leq C_4
\sqrt{T-t_j}$, we obtain
\begin{equation} \label{vc}
\max v(\cdot, \tau) \geq \frac{C_nC_3^2}{C_4} \sqrt{T-t_j}.
\end{equation}

On the other hand, observe that for a dimensional constant $C_n$
\begin{equation} \label{111}
\max_{\partial B_{r^{in}(t)}(0)} u(\cdot, t) \leq C_n M \alpha
r^{in}(t)
\end{equation}
since $\min _{\partial B_{r^{in}(t)}(0)} u(\cdot, t) =0$,   $u$ is
periodic in angle with period $< \alpha$ and $|\nabla u| \leq C_0M$
(Lemma~\ref{cak}). Similarly,
\begin{equation} \label{tsigma}
\max_{\tilde{\Sigma}_{t_j}} u(\cdot, t_j) = \max_{\substack{
   \partial B_s(0) \\
   r^{in}(t_j) \leq s \leq r^{out}(t_j)}}
 u(\cdot, t_j)  \leq C_n
M \alpha r^{out}(t_j)
\end{equation}
since the simple connectivity of $\Omega_0(u)$ and $u_t \leq 0$
imply that $\min _{\partial B_{s}(0)} u(\cdot, t_j) =0$ for
$r^{in}(t_j) \leq s \leq r^{out}(t_j)$. Hence
$$\max v \leq C_n M \alpha r^{out}(t_j) \leq C_n M \alpha C_4 \sqrt{T-t_j}$$
where the last inequality follows from the assumption on
$r^{out}(t_j)$. If 
$$\alpha< \alpha(n,M) := C_n C_3^2/C_4^2M = C(n)/M^9,$$   then
the above upper bound on $\max v$ would contradict (\ref{vc}). Hence
we conclude
$$r^{out}(t_{j+1}) \leq C_4 \sqrt{T-t_{j+1}}.$$

\vspace{0.2in}

{\it Claim 3. Recall that $C_4 = C(n)M^3$. If $C(n)$ is   large, 
\begin{equation} \label{cl3}
r^{in}(t_i) \leq \frac{C_4}{8} \sqrt{T-t_i}
\end{equation}
for all $i\geq 1$. }
For $i=1$, (\ref{cl3}) follows from (\ref{tc}) and $r^{in}(t_1)\leq
r^{in}(0)=1$. Now suppose that (\ref{cl3}) holds for $1\leq i \leq
j$ and not for $i=j+1$, i.e.,
\begin{equation} \label{e}
 r^{in}(t_{j+1}):=r_0> \frac{C_4}{8}
\sqrt{T-t_{j+1}}.
\end{equation}
Since (\ref{cl3}) holds for $1\leq i \leq j$, Claim 2 implies
$$r^{out}(t_i) \leq C_4 \sqrt{T-t_i} \hbox{ for } 1\leq i \leq j.$$
Then by Claim 1,
$$\max u(\cdot, t_i) \geq \frac{C_nC_3^2}{C_4}
\sqrt{T-t_i} \hbox{ for } 1\leq i \leq j .$$ Since (\ref{tsigma})
implies that $\max u(\cdot, t_i)$ ($1\leq i \leq j$) is taken inside
the maximal radial region $\Omega^{in} \subset \Omega(u)$, and since
$T\geq C(n,M)$,
\begin{equation} \label{c'c}
u(0, t_i) \geq \frac{CC_3^2}{C_4} \sqrt{T-t_i} \hbox{ for } 1\leq i
\leq j
\end{equation}
where $C$ is a constant depending on $n$ and $M$.

 Let  $k =\min \{k \in \{1,..., j\}: t_{j+1}-t_k \leq r_0^2\}$ where $r_0=r^{in}(t_{j+1})$. (Here observe
that $t_{j+1} -t_j =T-t_{j+1} < r_0^2$ by (\ref{e}).) Then
\begin{equation}\label{ee}
B_{r_0}(0) \times [t_k, t_{j+1}] \subset Q^-_{r_0}(0, t_{j+1})
\subset \Omega(u).
\end{equation}
  Observe that
\begin{equation}\label{eee}
 \left\{\begin{array} {lll}
 t_{j+1}-t_{k} \geq (t_{j+1} -t_{k-1})/3 \geq r_0^2/3  &\hbox{ if } &k \neq 1 \\ \\
t_{j+1}-t_{k} \geq T/4 \geq  C(n,M) \geq C(n,M) r_0^2 &\hbox{ if } &
k=1.
\end{array}\right.
\end{equation}
 Then by  (\ref{e}), (\ref{c'c}), (\ref{ee}) and (\ref{eee})
\begin{eqnarray*}
\min_{B_{r_0/2}(0)} u(\cdot, t_{j+1}) &\geq& C u(0, t_k) \\
&\geq& \frac{C C_3^2}{C_4} \sqrt{T-t_k} \\ &\geq& \frac{C
C_3^2}{C_4} \sqrt{t_{j+1}-t_k}
\\&\geq& \frac{C C_3^2 }{C_4}\cdot r_0\\
&\geq& C C_3^2 \sqrt{T-t_{j+1}}
\end{eqnarray*}
where $C$ denote constants depending on $n$ and $M$. In other words,
$u(\cdot, t_{j+1})$ has a lower bound $C C_3^2 \sqrt{T-t_{j+1}}$  on
the large ball $B_{r_0/2}(0)$ with the radius
$$r_0/2 \geq C_4 \sqrt{T-t_{j+1}}/8.$$
Then if $C_4 = C(n)M^3$ for a large dimensional constant $C(n)>0$, then 
 $u$
would have an extinction time larger than $T$, contradicting
$u(\cdot, T) \equiv 0$. Hence we conclude $r^{out}(t_{j+1}) \leq
(C_4/8)\sqrt{T-t_{j+1}}$.

\end{proof}

If $U$ is a self-similar solution with an extinction time $T$, then
the normal velocity of its free boundary at time $t$  is comparable
to $1/ \sqrt{T-t} \approx 1/r^{in}(t)$, and hence $\Gamma(U)$ is
Lipschitz in a parabolic scaling in each $Q_{r^{in}(t)}(x, t)$,
$(x,t) \in \Gamma(U)$. Recall that $\Omega^{in}=\Omega^{in}(u)$ is
the maximal radial subregion of $\Omega(u)$, i.e., its time cross
section $\Omega^{in}_t$ is given by
$$\Omega^{in}_t = B_{r^{in}(t)}(0) \hbox{ for } 0 \leq t \leq T.$$
 In the next lemma, we
prove an analogous result that the average normal velocity of
$\partial \Omega^{in}$  is bounded above by $C(n,M)/ \sqrt{T-t}$ on
each time interval $[t, t+\alpha r^{in}(t)^2]$ for $t \geq T/2$.
This gives that the inner region $\Omega^{in}$ can be approximated
by a subregion $\Omega_1$ which is Lipschitz in a parabolic scaling.

\begin{lemma} \label{lemma2}
Let $u_0$ be given as in Theorem~\ref{thm1} with $ \alpha <
\alpha(n, M)$, where $\alpha(n,M)>0$ is the constant as in Lemma~\ref{lemma1}. Then there
exists a space-time region $\Omega_1 \subset \Omega^{in}$, which is
radial in space and satisfies the following conditions:
\begin{itemize}
\item[(i)] For $i \geq 1$ (i.e., for $t_i \geq T/2$)
$$S_i :=\Omega_1 \cap \{t_i \leq t\leq t_{i+1}\} $$
is Lipschitz in a parabolic scaling with a Lipschitz constant
$C(n,M)$, i.e., the normal velocity of $\partial S_i$ is bounded
above by $C(n,M)/r^{in}(t_i)$.
\item[(ii)] $\partial S_i$ is located in the $C(n,M)\alpha
r^{in}(t_i)$-neighborhood of $\partial \Omega^{in}$.
\end{itemize}
In particular,
\begin{equation} \label{22222}
u \leq C(n,M)\alpha r^{in}(t_i) \hbox{ on }\partial S_i.
\end{equation}
\end{lemma}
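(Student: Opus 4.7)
The plan is to take $\Omega_1=\bigcup_{i\geq 1}S_i$ where each slab
$$S_i=\{(x,t):t_i\leq t\leq t_{i+1},\;|x|<\rho_i(t)\}$$
is a radial ball whose radius $\rho_i(t)$ is a Lipschitz-in-$t$ lower approximant of $r^{in}(t)$. By Lemma~\ref{lemma1} we have $r^{in}(t)\approx\sqrt{T-t}\approx r^{in}(t_i)$ throughout $[t_i,t_{i+1}]$, and since $\Omega_t(u)$ shrinks in time under (\ref{shrink}), $r^{in}$ is monotonically decreasing with total variation $r^{in}(t_i)-r^{in}(t_{i+1})\leq C(n,M)r^{in}(t_i)$ over an interval of length $\approx r^{in}(t_i)^2$. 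The average velocity of $r^{in}$ on $[t_i,t_{i+1}]$ is thus of order $1/r^{in}(t_i)$, which is precisely the target parabolic Lipschitz constant demanded in (i).

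First I would promote this average bound to the pointwise estimate
$$r^{in}(t)-r^{in}(s)\leq \frac{C(n,M)(s-t)}{r^{in}(t_i)}+C(n,M)\alpha r^{in}(t_i)\qquad(\star)$$
for $t_i\leq t<s\leq t_{i+1}$. At a touching point $p\in\partial B_{r^{in}(t)}(0)\cap\Gamma_t(u)$ the inscribed ball is a one-sided geometric barrier, and by Lemma~\ref{cp} one can compare $u$ locally to a self-similar radial solution (Lemma~\ref{pro11}) of matching radius $r^{in}(t)$. This yields a normal-velocity bound $\lesssim 1/r^{in}(t)$ for $\Gamma_t(u)$ at $p$, integrating to $(\star)$ with the additive $\alpha$-error absorbing any short-time oscillation that the barrier comparison cannot directly rule out.

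With $(\star)$ in hand I set $K:=C(n,M)/r^{in}(t_i)$ and define
$$\rho_i(t):=\inf_{s\in[t_i,t_{i+1}]}\bigl(r^{in}(s)+K|s-t|\bigr)-C(n,M)\alpha r^{in}(t_i).$$
This inf-convolution is automatically $K$-Lipschitz; taking $s=t$ gives $\rho_i(t)\leq r^{in}(t)-C\alpha r^{in}(t_i)$, so $S_i\subset\Omega^{in}$. For the reverse inequality, the monotonicity of $r^{in}$ handles $s<t$ (automatic) and $(\star)$ handles $s>t$, giving $\rho_i(t)\geq r^{in}(t)-C(n,M)\alpha r^{in}(t_i)$. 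Properties (i) and (ii) follow.

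To obtain (\ref{22222}): given $(x,t)\in\partial S_i$ with $|x|=\rho_i(t)$, set $y:=x\cdot r^{in}(t)/\rho_i(t)\in\partial B_{r^{in}(t)}(0)$. By (\ref{111}), $u(y,t)\leq C(n,M)\alpha r^{in}(t)\leq C(n,M)\alpha r^{in}(t_i)$, and $|x-y|=r^{in}(t)-\rho_i(t)\leq C(n,M)\alpha r^{in}(t_i)$; combined with $|\nabla u|\leq C_0M$ from Lemma~\ref{cak} this gives $u(x,t)\leq u(y,t)+C_0M|x-y|\leq C(n,M)\alpha r^{in}(t_i)$. The main obstacle is the local estimate $(\star)$: since $r^{in}$ is a purely geometric quantity with no a priori smoothness in $t$, controlling its pointwise velocity requires a careful barrier argument at the touching point of $\partial\Omega^{in}_t$ with $\Gamma_t(u)$, showing that $\Gamma_t(u)$ cannot move inward there any faster than the self-similar profile of matching radius $r^{in}(t)$.
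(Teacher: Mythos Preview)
Your overall architecture---build a Lipschitz lower envelope $\rho_i$ of $r^{in}$ via inf-convolution once a pointwise bound $(\star)$ is available, then read off (\ref{22222}) from (\ref{111}) and the Lipschitz bound on $u$---is sound, and your derivation of (\ref{22222}) is exactly the paper's. The inf-convolution is a clean alternative to the paper's piecewise construction of $\Omega_1$.

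The gap is the justification of $(\star)$. A self-similar barrier does not give it. To bound $r^{in}$ from below you would need a \emph{sub}solution $V\le u$ with support close to $B_{r^{in}(t)}(0)$; but a self-similar solution $U$ with support $B_{(1-\delta)r^{in}(t)}(0)$ has a fixed profile with $\max U\approx r^{in}(t)$ and $|\nabla U|=1$ on its boundary, and nothing so far guarantees $u\ge U$ pointwise (Lemma~\ref{lemma1} only controls $\max u$, not the interior profile). Scaling to $\lambda U$ with $\lambda<1$ makes $|\nabla(\lambda U)|=\lambda<1$ on the free boundary, so $\lambda U$ is a \emph{super}solution of (P) (cf.\ the check $|\nabla v|\le 1$ in the proof of Lemma~\ref{lemma6}), and comparison goes the wrong way. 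Moreover, bounding the normal velocity of $\Gamma_t(u)$ only at the single touching point $p$ would not control $r^{in}(s)$, since later touching points may occur elsewhere on $\partial B_{r^{in}}$.

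The paper obtains $(\star)$ (in the form (\ref{5}), on subintervals of length $\alpha r^{in}(t_i)^2$) by building a barrier tailored to $u$ itself. On a short cylinder $\Sigma=B_{r^{in}(\tau)}(0)\times[\tau-\alpha r^{in}^2,\tau]$ one solves the heat equation for a radial $\psi$ with initial data the radial minorant $\tilde\phi(x)=\min_{|y|=|x|}u(y)$ and zero lateral data; then $\psi\le u$. The oscillation estimate (\ref{111}) and the smallness of $\rho$ force $\psi\ge(1-C\sqrt\alpha)u$ on a slightly interior sphere; combined with the free boundary condition $|\nabla u|=1$ at the touching point this gives $|\nabla\psi|\ge 1-C\sqrt\alpha$ on all of $\partial B_{r^{in}(\tau)}(0)$. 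Now $\psi$ is radial and nearly a subsolution, and a barrier argument shows $r^{in}$ drops by at most $C\alpha r^{in}(t_i)$ over the next subinterval of length $\alpha r^{in}(t_i)^2$; iterating yields $(\star)$. The essential input you are missing is precisely this: a \emph{radial} lower barrier with gradient close to $1$ on the whole sphere $\partial B_{r^{in}}$, which cannot come from a universal self-similar profile but must be manufactured from $u$ via $\tilde\phi$ and the periodicity bound (\ref{111}).
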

\begin{proof}
Suppose that we have a subregion $\Omega_1$ of $\Omega^{in}$
satisfying the conditions (i) and (ii).  Then for a constant $C$
depending on $n$ and $M$,
$$\max_{\partial S_i} u \leq \max_{\partial \Omega^{in} \cap \{t_i \leq t \leq
t_{i+1}\}} u+ C \alpha r^{in}(t_i) \leq C \alpha r^{in}(t_i)$$ where
the first inequality follows from $|\nabla u| \leq C_0M$
(Lemma~\ref{cak}), and the last inequality follows from (\ref{111}).

Denote by
 $V_{I}$, the average normal velocity of $\partial
\Omega^{in}$ on the time interval $I$. Decompose the time interval
$[t_{i}, t_{i+1}] $ into  subintervals of length $\alpha
r^{in}(t_i)^2$, i.e., let
$$
t_i =t_{i0} < t_{i1}= t_{i0} +\alpha r^{in}(t_i)^2 < t_{i2}= t_{i1}
+\alpha r^{in}(t_i)^2 < ...< t_{ik} =t_{i+1}.
$$
 For the construction of  $\Omega_1$ satisfying the condition of the lemma, it suffices to prove that
\begin{equation} \label{5}
 V_{[t_{i
j},\,\, t_{i\,j+1}]} \leq C(n,M)/r^{in}(t_i).
 \end{equation}
More precisely, given the estimate (\ref{5}), the Lipschitz
subregion $\Omega_1$ can be constructed so that  $\partial \Omega_1$
and $\partial \Omega^{in}$ intersect at times $t=s_m$ with $\{s_m\}
\subset [t_i, t_{i+1}]$ and
 $|s_{m+1}-s_m|\leq \alpha
r^{in}(t_i)^2$.

Below we prove (\ref{5}). Let $i \geq 1$. By Lemma~\ref{lemma1}, the
average velocity of $\partial \Omega^{in}$ on $[t_{i-1}, t_i]$, that
is $V_{[t_{i-1}, t_i]}$, is bounded above by $$C(n,M)/\sqrt{T-t_i}
\approx C(n,M)/r^{in}(t_i).$$ Then there exists $\tau \in
[(t_{i-1}+t_i)/2, t_i]$ such that $V_{[t, \tau]} \leq
C(n,M)/r^{in}(t_i)$ for all $t \in [t_{i-1}, \tau]$. In particular,
$V_{[\tau-\alpha r^{in}(t_i)^2, \tau]} \leq C(n,M)/r^{in}(t_i)$. Let
$$\Sigma = B_{r^{in}(\tau)}(0) \times [\tau-\alpha r^{in}(t_i)^2,
\tau].$$ Denote $\tilde{\tau} = \tau-\alpha r^{in}(t_i)^2$ and let
$\tilde{\phi}(x,t)$ be the maximal radial function such that
$\tilde{\phi}(x,t) \leq u(x, t)$. Let $\psi(x,t)$ be a solution of
$$
 \left\{\begin{array} {lll}
 \Delta \psi =\psi_t  &\hbox{ in  } & \Sigma \\ \\
\psi= 0   &\hbox{ on  } & \partial \Sigma \\ \\
\psi =  \tilde{\phi} &\hbox{ on  } &  \{t= \tilde{\tau}\}.
\end{array}\right.
$$
Then $\psi \leq u$ and by Lemma~\ref{ca}, $\psi(\cdot, \tau)$ is
almost harmonic in the $c \sqrt{\alpha} r^{in}(t_i)$-neighborhood of
$\partial \Omega^{in}_{\tau}$. Observe  that  $\partial \Sigma$ is
located in the $C\alpha r^{in}(t_i)$-neighborhood of $\partial
\Omega^{in}$, since $V_{[\tilde{\tau}, \tau]} \leq
C(n,M)/r^{in}(t_i)$. Then by a similar argument as in (\ref{111})
and (\ref{tsigma}),
$$
u -\psi =u \leq C(n,M) \alpha r^{in}(t_i) \,\,\hbox{  in }\,\,
\Omega \cap \{\tilde{\tau} \leq t\leq \tau \} -\Sigma.
$$
Also since the initial perturbation $\rho \leq \alpha \leq \alpha M
\max \phi_0 $,
$$
\psi \geq (1 -C(n,M)\alpha)u \,\,\hbox{ on }\,\,
\Sigma_{\tilde{\tau}}.
$$
Hence on $\partial B_{(1 - c \sqrt{\alpha}) r^{in}(t_i)}(0)$,
\begin{equation}\label{12}
\psi(\cdot, \tau) \geq u(\cdot, \tau ) -C(n,M) \alpha r^{in}(t_i)
\geq (1-C\sqrt{\alpha})u(\cdot, \tau)
\end{equation}
 and  $|\nabla  \psi| \geq
1-C\sqrt{\alpha}$ on $\partial \Omega^{in}_{ \tau}$ for a constant
$C$ depending on $n$ and $M$. (Otherwise, there would exist a free
boundary point at which $|\nabla u| <1$.) Hence $u(\cdot, \tau)$ is
bounded below by a function $\psi(\cdot, \tau)$ which is almost
harmonic in the $c \sqrt{\alpha} r^{in}(t_i)$-neighborhood of
$\partial \Omega^{in}_{\tau}$ with $|\nabla \psi| \geq
1-C\sqrt{\alpha}$ on $\partial \Omega^{in}_\tau$. In fact, this
lower bound can be obtained with $\alpha$ replaced by any number
$\geq \alpha$. Then by a barrier argument, $u(\cdot, \tau + \alpha
r^{in}(t_i)^2)
>0$ on $B_{r^{in}(\tau) - C \alpha r^{in}(t_i)} (0)$, i.e.,
$$
V_{[\tau, \tau+\alpha r^{in}(t_i)^2]} \leq C(n,M)/r^{in}(t_i).
$$
Next, we take $\tau_1 \in [\tau+\alpha r^{in}(t_i)^2 /2, \tau
+\alpha r^{in}(t_i)^2] $ such that $V_{[t ,\tau_1]} \leq
C(n,M)/r^{in}(t_i)$ for all $t \in [\tau, \tau_1]$. Then by a
similar argument, we obtain (\ref{5}) on the interval $[\tau_1,
\tau_1+\alpha r^{in}(t_i)^2]$. By induction, (\ref{5}) holds on each
time interval with length $\alpha r^{in}(t_i)^2$.
\end{proof}

\section{$\alpha$-flatness of free boundary } \label{sec4}

In this section we prove the $\alpha$-flatness of the free boundary
$\Gamma(u)$. More precisely, for $(x,t) \in \Gamma (u)$ we locate
the free boundary part $\Gamma(u) \cap Q^-_{r(t)}(x,t)$ in the $K
\alpha r(t)$-neighborhood of the Lipschitz boundary $\partial
\Omega_1$. Here $\alpha$ is
 the size of the initial perturbation and $K$ is a
 constant depending on $n$ and $M$.
 
 \vspace{0.1in}
 
 $\bullet$  Throughout the rest of the paper, we let  $u_0$ be given as in
 Theorem~\ref{thm1} with $ \alpha \leq \alpha(n, M)$, where $\alpha(n,M)>0$ is a constant depending only on  $n$ and $M$.

\vspace{0.1in}

$\bullet$ Denote by $r(t)$, the radius of the time cross-section of
$\Omega_1$ at time $t$, i.e.,
$$\Omega_{1t}=B_{r(t)}(0) \hbox{ for } 0 \leq t \leq T .$$

\vspace{0.1in}
\noindent
 Note that by Lemma~\ref{lemma2}, $
r(t) \leq  r^{in}(t) \leq (1+C\alpha) r(t) $ for a constant $C$
depending on $n$ and $M$.

 \begin{figure}
 \centering
 \includegraphics[width=0.7\textwidth]{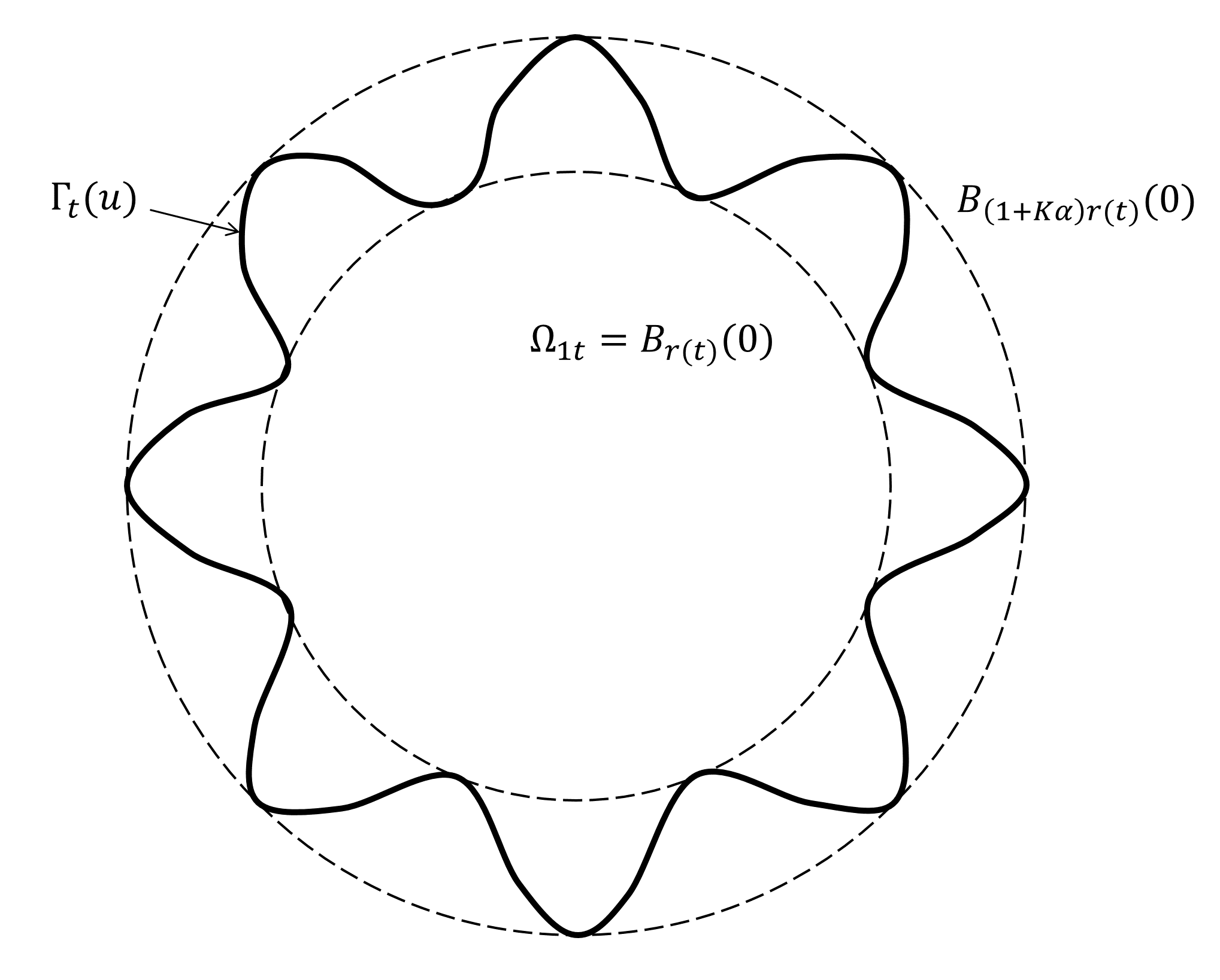} \caption{ Lemma~\ref{lemma3}}
 \end{figure}

\begin{lemma} \label{lemma3}
There is a constant $K = K(n,M)>0$ such that 
$$r(t) \leq r^{out}(t) \leq (1+K \alpha)r(t)$$
for $t_2 \leq t\leq T$.  In other words,  $\Gamma_t(u)$ is contained in the annulus
$B_{(1 +K \alpha) r(t)}(0) - B_{r(t)}(0) $ for $t_2 \leq t\leq T$. (See Figure 4.)
\end{lemma}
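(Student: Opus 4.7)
The plan is to construct a radial supersolution of $(P)$ in the exterior of $\Omega_1$ that pins the free boundary of $u$ within a thin annular neighborhood of $\partial\Omega_1$, exploiting the boundary smallness $u \leq C(n,M)\alpha r(t)$ on $\partial\Omega_1$ from Lemma~\ref{lemma2}. The comparison principle (Lemma~\ref{cp}) will then transfer the outer-radius estimate from the barrier to $u$.

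Fix $t_0 \in [t_2, T)$ and let $s = r(t_0)$. On the radial annulus $\{s \leq |x| \leq R\}$ take $V(|x|)$ to be the stationary radial harmonic function determined by the two boundary conditions $V(s) = C_*\alpha s$ (matching a constant multiple of the bound in (\ref{22222})) and $|\nabla V(R)| = 1$ (the free-boundary gradient condition). Writing $V(r) = b(r^{2-n} - R^{2-n})$ for $n \geq 3$ (and $b\log(R/r)$ for $n=2$) and expanding in $\epsilon := R/s - 1$ gives $V(s) \approx s\epsilon$, so the two conditions pin down $R = (1+K\alpha)s$ with $K = K(n,M)$. Since $V_t = \Delta V = 0$ and $|\nabla V(R)| \leq 1$ (after a harmless perturbation to make the inequality strict), $V$ is a classical supersolution of $(P)$ whose outer free boundary sits exactly at $|x| = R = (1+K\alpha)s$.

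The comparison principle applied to $V$ and $u$ on the cylinder $\{|x|\geq s\} \times [t_a, t_0]$ requires $V\geq u$ on the parabolic boundary. On the lateral piece $\{|x|=s\}$, the monotonicity $s = r(t_0) \leq r(t)$ for $t \leq t_0$ places $\partial B_s(0)$ inside $\Omega_1$, and integrating $|\nabla u|\leq C_0 M$ (Lemma~\ref{cak}) from $\partial\Omega_1 = \partial B_{r(t)}(0)$ inward gives $u \leq C\alpha r(t) + C_0 M(r(t) - s)$; choosing the interval length $\delta = t_0 - t_a \lesssim \alpha s^2/M$ ensures $r(t) - s \lesssim \alpha s/M$, so $u \leq V(s)$ on this piece.

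The main obstacle is the initial slice $\{t = t_a\}$: since $\Omega_t(u)$ is decreasing in $t$, $r^{out}(t_a) \geq r^{out}(t_0)$, so the small static $V$ cannot a priori dominate $u(\cdot, t_a)$ on the wider annulus $\{s\leq |x|\leq r^{out}(t_a)\}$. I would overcome this by a two-stage iterative argument across the dyadic intervals $[t_i, t_{i+1}]$: starting from the coarse bound $r^{out}(t_1)\leq C_4\sqrt{T-t_1}$ of Lemma~\ref{lemma1}, a dynamic radial shrinking supersolution built from the self-similar profile of Lemma~\ref{pro11} (with a carefully chosen extinction time $T_V > T$) dominates $u$ at $t_1$ and drives the ratio $r^{out}(t)/r(t)$ into a regime of bounded constant on $[t_1, t_2]$; once this ratio is bounded, the static harmonic barrier above is matched at the start of each subsequent dyadic interval and iterated, reducing the outer-to-inner ratio to $(1+K\alpha)$ after a finite number of iterations depending only on $n$ and $M$. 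This yields the estimate uniformly for $t\in [t_2, T)$.
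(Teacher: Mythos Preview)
Your proposal has a genuine gap: neither stage of your two-stage argument actually reduces the ratio $r^{out}(t)/r(t)$.

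For the static harmonic barrier $V$ on $\{s\leq |x|\leq (1+K\alpha)s\}$, the comparison you want fails at the initial slice $\{t=t_a\}$ precisely because $u$ is positive on the larger set $\{s\leq |x|\leq r^{out}(t_a)\}$, and you have no mechanism to make $V\geq u$ there unless you \emph{already} know $r^{out}(t_a)\leq (1+K\alpha)s$. If instead you widen $V$ to match $r^{out}(t_a)$, its support is fixed in time; after one dyadic step you only learn $r^{out}(t_{i+1})\leq (1+\kappa_i)r(t_i)$, and since $r(t_{i+1})<r(t_i)$ the ratio does not improve. So ``iterating the static barrier'' yields nothing. For the first stage, a self-similar solution $U$ with extinction time $T_V>T$ does dominate $u$ from $t_1$ onward, but it gives $r^{out}(t)\leq a_2\sqrt{T_V-t}$, and as $t\to T$ this is bounded below by $a_2\sqrt{T_V-T}>0$ while $r(t)\to 0$: the ratio diverges rather than settling to a constant.

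The paper's argument avoids both problems by working with a \emph{moving} comparison region. It sets $\tilde\Omega_2$ to be the space-time region whose $t$-section is $B_{(1+K\alpha)r(t)}(0)$ for $t\geq t_2$ (suitably fattened to a fixed cylinder for $0\leq t\leq t_1$ so that it is parabolically Lipschitz and contains $\Omega(u)$ there), and takes $w$ caloric in the annular shell $\tilde\Omega_2\setminus\Omega_1$ with $w=u$ on $\partial\Omega_1$ and $w=0$ on $\partial\tilde\Omega_2$. The base case $u\leq w$ on $[0,t_1]$ is then free, since $\Omega(u)\subset\tilde\Omega_2$ by construction. The inductive step from $[0,t_i]$ to $[t_i,t_{i+1}]$ uses that, once $\Gamma_t(u)$ is trapped in the thin shell of width $K\alpha r(t)$, one has $u\leq CK\alpha r(t)$ throughout the shell (by the gradient bound), and then the almost-harmonicity of $w(\cdot,t)$ near the Lipschitz boundary $\partial\tilde\Omega_{2t}$ forces $|\nabla w|\lesssim C\alpha r(t)/(K\alpha r(t))=C/K<1$ there for $K=K(n,M)$ large. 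Thus $w$ is a supersolution and the induction closes. The essential idea you are missing is that the outer boundary of the barrier must track $r(t)$ in time; a stationary annulus cannot do this.
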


\begin{proof}
Let $K$ be a constant depending  on $n$ and $M$,
which will be chosen later. Let $\Omega_2$ be a space time region
containing $\Omega_1$ such that its time cross-section $\Omega_{2t}$
is given by
$$\Omega_{2t}  = (1+K \alpha) \Omega_{1t} := B_{(1+K \alpha)r(t)}(0)$$
 for $0\leq t \leq T$.
Since $\Omega_1$ is Lipschitz in a parabolic scaling for $t\geq
t_1$, $\Omega_2$ is also Lipschitz in a parabolic scaling for $t
\geq t_1$. Now  modify $\Omega_2$ as below for $0 \leq t \leq t_2$
so that it is Lipschitz for all $t \geq 0$. Since $r^{out}(t_1)
\approx r^{out}(t_2) \approx 2$, we can construct $\tilde{\Omega}_2$
satisfying
\begin{itemize}
\item[a.] $\tilde{\Omega}_2 \cap \{0 \leq t\leq t_1\} = B_2(0)
\times [0, t_1]$
\item[b.] $\tilde{\Omega}_2 \cap \{t_1 \leq t\leq t_2\} \supset \Omega_2 \cap \{t_1 \leq t\leq t_2\} $
\item[c.] $\tilde{\Omega}_2 \cap \{t_2 \leq t < T\}  =
\Omega_2 \cap \{t_2 \leq t < T\} $
\item[d.] $\tilde{\Omega}_2$ is Lipschitz in a parabolic scaling.
 \end{itemize}

Let $w$ be a caloric function in $\tilde{\Omega}_2 - \Omega_1$ such
that
$$
\left\{\begin{array} {lll} \Delta w = w_t &\hbox{ in } &
\tilde{\Omega}_2 -\Omega_1 \\ \\
w=u  &\hbox{ on } & \{t=0\} \cup  (\partial \Omega_1 \cap\{t>0\} )\\
\\
  w= 0 &\hbox{ on } & \partial\tilde{\Omega}_2 \cap \{t>0\}.
\end{array}\right.$$
For the proof of the lemma, it suffices to prove $u\leq w$ since
this inequality would imply that the free boundary $\Gamma(u)$ in
contained in $\Omega_2$ for $t \geq t_2$, i.e., the outer radius
$r^{out}(t) \leq (1+K\alpha) r(t)$ for $t \geq t_2$. Below we prove
$u\leq w$.

\vspace{0.2in}

1. Since the free boundary of $u$ shrinks in time,
\begin{equation} \label{222}
u\leq w \hbox{ for } 0 \leq t\leq t_1.
\end{equation}

\vspace{0.1in}

2. Using  (\ref{22222}) and the Lipschitz property of
$\tilde{\Omega}_2$, we will show that
\begin{equation} \label{24} u \leq w \hbox{ for }t_1 \leq t \leq
t_4.
\end{equation}
Since $u(\cdot, t_1) \leq w(\cdot, t_1)$ (see (\ref{222})) and $u=w$
on $\partial \Omega_1$, it suffices to prove that $w$ is a
supersolution of ($P$)  for $t_1< t< t_4$. Since $u_0 =\rho \leq
\alpha$ on $(\tilde{\Omega}_2 -\Omega_1) \cap \{t=0\}$, the bound
(\ref{22222}) and the construction of $w$ yield
\begin{equation} \label{23}
\max w \leq C_0\alpha
\end{equation}
 for $C_0 =C_0(n,M)$. On the other hand, since $\tilde{\Omega}_2 $ is
 Lipschitz in a parabolic scaling for $t \geq 0$,  $w(\cdot, t)$ is almost
 harmonic near its vanishing boundary $\partial \tilde{\Omega}_{2t}$ for $t \geq t_1$
  (see Lemma~\ref{ca}). Observe that for $t_2 \leq t\leq t_4$
  $$\Omega_t(w)= \tilde{\Omega}_{2 t} - \Omega_{1 t} = B_{(1+K \alpha) r(t)}(0)
  - B_{r(t)}(0) $$
  and for $t_1 \leq t\leq t_2$
 $$\Omega_t(w) \supset B_{(1+K \alpha) r(t)}(0)
  - B_{r(t)}(0) $$  where
 $ r(t) \approx 1$  for $ t_1 \leq t \leq t_4$ (see (\ref{rt}) and
(\ref{tc})). Hence we can observe that if $K=K(n,M)$ is chosen large, then the almost harmonicity of $w$ with its
upper bound (\ref{23}) implies that $w$ is bounded from above by a
radial linear function with a small slope so that
\begin{equation} \label{233}
|\nabla w| <1 \hbox{ on }
\partial \tilde{\Omega}_2 \cap \{t_1 \leq t\leq t_4\}. \end{equation}
Hence $w$ is a supersolution for $t_1< t< t_4$ and (\ref{24})
follows.

 \vspace{0.2in}

3. Now suppose
 \begin{equation} \label{26}
u \leq w \hbox{  for  } 0 \leq t \leq t_i
\end{equation}
for a fixed  $i \geq 4$ and  we  show
$$u \leq w \hbox{  for }
t_i \leq t \leq t_{i+1}.$$ First, observe that  (\ref{26}) implies
 the free
 boundary $\Gamma_{t_{i-2}}(u)$  is trapped between $$\partial
 \Omega_{1 t_{i-2}}=\partial B_{r(t_{i-2})}(0) \hbox{ and }\partial
 \tilde{\Omega}_{2 t_{i-2}}=\partial B_{(1+K \alpha)r(t_{i-2})}(0).$$
In other words, the inner and outer boundaries of $\tilde{\Omega}_2
- \Omega_{1}$ at time $t=t_{i-2}$ are located within a distance $K
\alpha r(t_{i-2})$ from the free boundary of $u$. Then since
$|\nabla u |\leq C_0 M$, we obtain
\begin{equation} \label{27} u \leq C K \alpha r(t_{i-2})   \hbox{ on } (\tilde{\Omega}_2- \Omega_1)
\cap \{t=t_{i-2}\}
\end{equation}
 for some $C =C(n,M)$. Also by (\ref{22222}),
\begin{equation} \label{28} u \leq C  \alpha r(t) \leq C  \alpha r(t_{i-2})
 \hbox{ on }\partial \Omega_1 \cap \{t_{i-2}\leq t \leq t_{i+1}\}.
\end{equation}
 Now  construct two caloric functions $w_1$ and $w_2$ in $$\Pi :=
(\tilde{\Omega}_2-\Omega_1) \cap \{t_{i-2} \leq t \leq t_{i+1}\}$$
such that
$$
\left\{\begin{array} {lll} \Delta w_1 = \partial w_1 /\partial t
&\hbox{ in } & \Pi \\ \\
 w_1=u  &\hbox{ on } &\{t=t_{i-2}\}
\\ \\
 w_1= 0 &\hbox{ on } & \partial \Omega_1 \cup \partial
 \tilde{\Omega}_2
\end{array}\right.$$ and that
$$\left\{\begin{array} {lll} \Delta w_2 = \partial w_2 /\partial
t &\hbox{ in } & \Pi\\ \\
 w_2=0  &\hbox{ on } &  \{t=t_{i-2}\} \cup \partial
 \tilde{\Omega}_2
\\ \\
 w_2= u &\hbox{ on } & \partial \Omega_1.
\end{array}\right.$$
Below we prove $$u \leq w_1+w_2  $$ in $\Pi \cap \{t_i \leq t \leq
t_{i+1}\}$ by showing that $w_1+w_2$ is a supersolution of ($P$) for
$t_i \leq t \leq t_{i+1}$. On $\Pi \cap \{t=t_{i-1}\}$,
\begin{eqnarray}
  w_1 + w_2 &\leq& \max w_1(\cdot, t_{i-1}) +  C \alpha r(t_{i-2})
\nonumber \\ &\leq&  2 C \alpha r(t_{i-2})\label{255}
 \end{eqnarray}
where  the first inequality follows from (\ref{28}) and the last
inequality follows from (\ref{27}) if $\alpha \leq \alpha(n, M)$ for
a constant $\alpha(n, M)>0$ depending on $n$ and $M$, since the time cross-section
of the domain $\Pi$, that is $B_{(1+K\alpha)r(t_{i-2})}(0) -
B_{r(t_{i-2})}(0)$,  has a small thickness $K\alpha
r(t)$ for $t_{i-2} \leq t \leq t_{i-1}$. By (\ref{28}) and
(\ref{255}),
$$
 w_1+w_2 \leq C(n,M) \alpha r(t_{i-2})
$$
on the parabolic boundary of $\Pi \cap \{t_{i-1} \leq t \leq
t_{i+1}\}$ and hence
\begin{equation}\label{2555}
\max_{\Pi \cap \{t_{i-1} \leq t \leq t_{i+1}\}}w_1+w_2 \leq C(n,M)
\alpha r(t_{i-2}).
\end{equation}
 Then by a similar argument as in (\ref{233}) with (\ref{23})
replaced by (\ref{2555}), we obtain
$$ |\nabla (w_1+w_2)| <1  $$
 on $ \partial \tilde{\Omega}_2 \cap \{t_{i} \leq  t \leq t_{i+1}\}$ if $\alpha \leq \alpha(n,M)$. 
Hence we conclude $w_1+w_2$ is  a supersolution of ($P$) for $t_{i}
\leq t \leq t_{i+1}$.

By the construction of $w_1$ and $w_2$,
$$ u =w_1+w_2
$$ on the inner lateral boundary $\partial \Omega_1 \cap \{t_i \leq t \leq t_{i+1}\} $ of $\Pi \cap \{t_i \leq t \leq t_{i+1}\}$.
Also
$$  u(\cdot, t_i)  \leq (w_1 + w_2)(\cdot, t_i) $$
since the assumption (\ref{26}) implies  $\Omega(u) \cap \{t_{i-2}
\leq t \leq t_i\}  \subset \tilde{\Omega}_2$. Hence we conclude
$$u \leq w_1 +w_2 $$
in $\Pi \cap \{ t_i \leq t \leq t_{i+1}\}$. This implies that the
free boundary  $\Gamma_t(u)$ is contained in $\tilde{\Omega}_2$ for
$t_i \leq t \leq t_{i+1}$. Now recall that the caloric function $w$
has positive set $ \tilde{\Omega}_2 -\Omega_1$ with $w =u$ on
$\partial \Omega_1$ and $w=0$ on $\partial \tilde{\Omega}_2$. Also
by (\ref{26}), $u \leq w$ at time $t=t_i$. Hence by comparison,
$$u \leq w \hbox{ for } t_i \leq t \leq t_{i+1}.$$
\end{proof}

\section{Interior approximation of $u$ by a radial function $\phi$ }
\label{sec5}

In this section,  we approximate a  solution $u$ by a radial
function $\phi$ in the inner region,   $\alpha^{2/3}
r(t)$-away from the boundary $
\partial B_{r(t)}(0)$. 

\vspace{0.1in}

 \noindent $\bullet$ Let $\phi$ be a radially
symmetric function defined in  $\Omega_1$ such that on each time
interval $[t_{i}, t_{i+1})$, $\phi(x,t)$ solves
$$
\left\{\begin{array} {lll}
 \Delta \phi=\phi_t  &\hbox{ in } & \Omega_1 \cap   \{t_i < t <
 t_{i+1}\}
\\ \\
 \phi= 0 &\hbox{ on } & \partial \Omega_1 \cap \{t_i < t <t_{i+1}\}
\\  \\
\phi(x,t_i) =  \phi(|x|, t_i)=\displaystyle{\min_{\{y:|y|=|x|\}} u(y
, t_i)} &\hbox{ on } & \Omega_1 \cap \{t=t_i \}.
\end{array}\right.$$
 In other words,
\begin{itemize}
\item[(i)] $\phi(\cdot, t_i)$ is
the maximal radial function $\leq u$.
\item[(ii)]  $\phi(x,t)$ is caloric in  $\Omega_1 \cap \{t_i < t
<t_{i+1}\}$
 with  $\phi=0$ on $\partial \Omega_1 \cap \{ t_i
< t <t_{i+1}\} $, and  $\phi=\phi(\cdot, t_i)$ on $\Omega_1 \cap
\{t=t_i\}$.
\end{itemize}
Note that $\phi$ need not be continuous at $t=t_i$. By comparison,
$\phi \leq u$.

\begin{lemma} \label{lemma4}
\label{lemma24} 

For $\e=2/3$ and $t\in [t_2, T)$, 
\begin{equation} \label{in}
\phi(\cdot, t) \leq u(\cdot,t) \leq (1+ C\alpha^{1-\e}) \phi(\cdot,
t)
\end{equation}
on $B_{(1-\alpha^\e)r(t)}(0)$, where $C>0$ is a constant depending on $n$ and $M$.
\end{lemma}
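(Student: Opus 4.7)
The inequality $\phi \leq u$ is immediate from the comparison principle (Lemma~\ref{cp}): at $t = t_i^+$ one has $\phi(\cdot,t_i) = \min_{|y|=|x|} u(y,t_i) \leq u(\cdot,t_i)$ by construction, $\phi = 0 \leq u$ on $\partial\Omega_1 \cap (t_i,t_{i+1})$, and both functions are caloric in $\Omega_1 \cap (t_i,t_{i+1}) \subset \Omega(u)$. The content of the lemma is the multiplicative upper bound. My plan is to set $w := u - \phi$, control $w$ additively by the parabolic maximum principle on each dyadic slab $[t_i,t_{i+1}]$, and then convert this to a relative bound using a matching lower bound $\phi \gtrsim \alpha^{2/3} r(t)$ on the shrunken ball.

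The additive step is routine. The function $w \geq 0$ is caloric on $\Omega_1 \cap (t_i,t_{i+1})$. On the initial slice $t = t_i$, the angular periodicity of $u_0$ with period $\leq \alpha$ is inherited by $u(\cdot,t_i)$ through the rotational symmetry of $(P)$, and combined with $|\nabla u| \leq C_0 M$ (Lemma~\ref{cak}) yields the oscillation estimate
\[
w(x,t_i) \;=\; u(x,t_i) - \min_{|y|=|x|} u(y,t_i) \;\leq\; C(n,M)\,\alpha\,|x| \;\leq\; C(n,M)\,\alpha\,r(t_i).
\]
On $\partial\Omega_1 \cap (t_i,t_{i+1})$, $\phi$ vanishes and $w = u \leq C(n,M)\,\alpha\,r(t)$ by (\ref{22222}) of Lemma~\ref{lemma2}. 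The parabolic maximum principle then gives $w \leq C(n,M)\,\alpha\,r(t)$ throughout $\Omega_1 \cap [t_i,t_{i+1}]$, using $r(t) \approx r(t_i)$ on the slab.

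The lower bound on $\phi$ is the heart of the proof. By Lemma~\ref{lemma1} we have $\max u(\cdot,t) \approx r(t)$, and using the additive step at an interior reference point $x^*$ with $|x^*| \approx r(t)/2$ gives $\phi(x^*,t) \geq u(x^*,t) - C\alpha r(t) \gtrsim r(t)$. Since $\partial\Omega_1$ is Lipschitz in parabolic scaling with constant $C(n,M)$ (Lemma~\ref{lemma2}) and $\phi$ is a nonnegative caloric function in $\Omega_1$ vanishing on $\partial\Omega_1$, Lemma~\ref{caf421} applied to $\phi$ gives $|\nabla\phi(x,t)| \approx \phi(x,t)/\mathrm{dist}(x,\partial\Omega_{1,t})$; combined with the interior value $\phi(x^*,t) \gtrsim r(t)$ at a point with $\mathrm{dist}(x^*,\partial\Omega_{1,t}) \approx r(t)$, this promotes to the boundary-Harnack relation $\phi(x,t) \gtrsim \mathrm{dist}(x,\partial\Omega_{1,t})$ in a collar of $\partial\Omega_{1,t}$. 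For $|x| \leq (1-\alpha^{2/3})r(t)$ the distance is at least $\alpha^{2/3} r(t)$, so $\phi(x,t) \gtrsim \alpha^{2/3} r(t)$ on the shrunken ball.

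Combining the two steps, on $B_{(1-\alpha^{2/3}) r(t)}(0)$ one has
\[
\frac{u-\phi}{\phi} \;\leq\; \frac{C(n,M)\,\alpha\,r(t)}{c(n,M)\,\alpha^{2/3}\,r(t)} \;=\; C'(n,M)\,\alpha^{1/3},
\]
which is (\ref{in}) with $\epsilon = 2/3$. The main obstacle is the lower-bound step: upgrading Lemma~\ref{caf421}'s gradient-value ratio $|\nabla\phi| \approx \phi/d$ to the global growth $\phi \gtrsim d$ with constants uniform in $t$ across the slab requires the quantitative parabolic-Lipschitz structure of $\Omega_1$ from Lemma~\ref{lemma2} together with a Carleson-type integration argument; an alternative would be to build a radial subsolution vanishing on $\partial\Omega_1$ and match it to $\phi$ via Lemma~\ref{ca} near $\partial\Omega_1$. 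The value $\epsilon = 2/3$ is the cleanest choice that balances the $O(\alpha r)$ additive error against the $\alpha^{2/3}$-retreat from $\partial\Omega_1$ needed to extract linear lower growth.
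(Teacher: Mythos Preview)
Your overall architecture --- an additive bound $u-\phi \le C\alpha\, r(t)$ plus a linear lower bound $\phi \gtrsim \alpha^{2/3} r(t)$ on the shrunken ball --- matches the paper exactly, and your additive step is essentially identical to the paper's (\ref{refref})--(\ref{refrefr}).

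The gap is in the lower-bound step, and it is not the obstacle you flag. You propose to apply Lemma~\ref{caf421} (or Lemma~\ref{ca}) directly to $\phi$. But those lemmas require a caloric function defined in a full parabolic cube $Q_r$ at scale $r \sim r(t)$, i.e.\ caloric for a time window of length $\sim r(t)^2 \approx T-t_i$. The function $\phi$ is reset at every $t_i$: on $[t_i,t_{i+1})$ it has only been caloric for time $t-t_i$, which is arbitrarily small when $t\downarrow t_i$. So neither lemma applies to $\phi$ uniformly across the slab, and there is no a priori linear growth of $\phi(\cdot,t_i)=\min_{|y|=|x|}u(y,t_i)$ from $\partial B_{r(t_i)}(0)$ (the free boundary of $u$ is not yet known to be Lipschitz).

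The paper's fix is exactly your ``alternative'' suggestion, but with one crucial ingredient you did not specify: the radial subsolution $w$ is launched at $t_{i-1}$, one dyadic step \emph{earlier}, with initial data $\phi(\cdot,t_{i-1})$. Then for any $\tau\in[t_i,t_{i+1})$, $w$ has been caloric in the Lipschitz region $\Omega_1$ for time $\ge t_i-t_{i-1}\approx r(\tau)^2$, so Lemma~\ref{ca} applies and gives $w(\cdot,\tau)\gtrsim \alpha^{2/3}r(\tau)$ near the boundary (see (\ref{31})--(\ref{32})). Since $\phi(\cdot,t_i)$ is the \emph{maximal} radial function $\le u(\cdot,t_i)$ while $w(\cdot,t_i)$ is merely radial and $\le u(\cdot,t_i)$, one has $w\le\phi$ on $[t_i,\tau]$, which transfers the lower bound to $\phi$. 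The issue is not integrating a gradient-value ratio; it is manufacturing enough parabolic history.
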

\begin{proof}
Fix $\tau \in [t_i, t_{i+1})$ for  $i \geq 2$. Let $w$ solve
$$
\left\{\begin{array} {lll}
 \Delta w=w_t  &\hbox{ in } & \Omega_1 \cap \{t_{i-1} < t< \tau\}
\\ \\ w = 0 &\hbox{ on } & \partial \Omega_1 \cap \{t_{i-1} < t< \tau\}
\\ \\ w =\phi &\hbox{ on } &  \Omega_1 \cap \{t=t_{i-1}\}.
\end{array}\right.$$
Observe that  $w$ is a radially symmetric  function with $w \leq u$.
Since $\phi(\cdot, t_i)$ is the maximal radial function $\leq
u(\cdot, t_i)$, we obtain
$$w(\cdot, t_i) \leq \phi(\cdot, t_i).$$
Then by comparison
\begin{equation} \label{refrefre}
w \leq \phi \,\hbox{ for }\, t_{i-1} \leq t \leq \tau.
\end{equation}
On the other hand, by a similar argument as in the proof of
 (\ref{111}) we obtain
\begin{equation} \label{refref}
u(\cdot, t_i) -\phi(\cdot, t_i) \leq C\alpha r(t_i)
\end{equation}
 for
$C=C(n,M)>0$ and $i \in \N$ since $u$ is periodic in angle with
period $\leq \alpha$,   $|\nabla u| \leq C_0M$ and $\phi(\cdot,
t_i)$ is the maximal radial function $\leq u(\cdot, t_i)$. Hence for
some  $C=C(n,M)>0$
\begin{eqnarray*}
Cr(t_{i-1}) &\leq& \max u(\cdot, t_{i-1}) \\ &\leq& 2 \max
\phi(\cdot, t_{i-1}) =2\max w(\cdot, t_{i-1})
\end{eqnarray*}
  where the first inequality follows from
 Lemma~\ref{lemma1} and the second inequality follows from
 (\ref{refref}). The above inequality implies that for  $C=C(n,M)>0$
\begin{equation} \label{31}
Cr(\tau) \leq \max w(\cdot, \tau)
\end{equation}
 since  $r(t_{i-1}) \approx
 r(t)$ for $t_{i-1} \leq t \leq \tau$ (Lemma~\ref{lemma1}),  $|\nabla u| \leq C_0M$
 (Lemma~\ref{cak}) and $\Omega_1$ is Lipschitz in a parabolic
 scaling.  The
Lipschitz property of $\Omega_1 \cap \{t_{i-1}\leq t\leq \tau\}$
implies that
 $w(\cdot, \tau)$  is almost harmonic near
 $\partial \Omega_{1 \tau}$ (Lemma~\ref{ca}). Then by a similar
 reasoning as in (\ref{233}) with the lower bound (\ref{31}), we obtain that $w(|x|,\tau)$ is bounded
 from below by a radial linear function  vanishing on $|x|=r(\tau)$,
  with slope $c=c(n,M)$. Hence
  on $ B_{(1-\alpha^\e)r(\tau)}(0)$
\begin{equation} \label{32}
 c(n,M)\alpha^\e r(\tau) \leq  w(\cdot, \tau)  \leq \phi(\cdot,\tau)
 \end{equation}
where the last inequality follows from (\ref{refrefre}).

 Now observe that by (\ref{refref})
 $$u(\cdot, t_{i}) -\phi(\cdot, t_{i}) \leq C\alpha
 r(t_{i})$$ and on $\partial \Omega_1 \cap \{ t_i <t< \tau\}$
$$u -\phi =u \leq C\alpha r(t_i) $$
where the inequality follows from (\ref{22222}) and
Lemma~\ref{lemma1}. Hence by comparison
\begin{equation} \label{refrefr}
u(\cdot, t) -\phi(\cdot, t)   \leq   C\alpha r(t_i)
\end{equation}
 for $t_i \leq t
\leq \tau$. Conclude that on $ B_{(1-\alpha^\e)r(\tau)}(0)$
\begin{eqnarray*}
u(\cdot, \tau) -\phi(\cdot, \tau)  & \leq &  C\alpha r(t_i) \\
& \leq & C \alpha r(\tau) \\
& \leq & C \alpha^{1-\e} \phi(\cdot,\tau)
\end{eqnarray*}   for $C=C(n,M)>0$ where the first inequality follows from (\ref{refrefr}),
the second inequality follows from Lemma~\ref{lemma1} and the last inequality
follows from (\ref{32}).
\end{proof}

\section{Interior improvement by flatness of boundary} \label{sec6}

We improve the interior estimate in Lemma~\ref{lemma4}, using the
flatness of the free boundary, that is Lemma~\ref{lemma3}. More
precisely, the constant $C$ in the interior estimate  (\ref{in})
improves in time, up to an order determined by the `flatness
constant' of the free boundary.

\vspace{0.1in}

 \noindent $\bullet$ Let $\tilde{\phi}$ be a radially  symmetric
 function defined in $\Omega_1$ such that
$$ \tilde{\phi}(x, t) = \tilde{\phi}(|x|, t)=\min_{\{y:|y|=|x|\}} u(y , t).$$
In other words, $\tilde{\phi}$ is
 the maximal radial function in
 $\Omega_1$ such that $\tilde{\phi}\leq
u$. Note that $\phi(\cdot, t_i)=\tilde{\phi}(\cdot, t_i)$ for $i \in
\N$, and $\phi \leq \tilde{\phi}$ since $\phi$ is radial with
$\phi\leq u$.

\begin{lemma} \label{lemma5}
Let $\e =2/3$. Assume
\begin{itemize}
\item[(a)] (\ref{in}) holds at time $t=t_i$, i.e.,  on $B_{(1-\alpha^\e)r(t_i)}(0)$
 $$ u(\cdot,t_i) \leq (1+ C\alpha^{1-\e}) \phi(\cdot,
t_i)$$.
\item[(b)]  For  $t_{i} \leq t \leq t_{i+1}$, $\Gamma_t(u)$ is contained in $B_{(1+K\alpha)r(t)}(0)
-B_{r(t)}(0)$  for a constant $K $ satisfying
\begin{equation} \label{K}
1 \leq K<\alpha^{\frac{\e-1}{2}}C
\end{equation}
where $C$ is given as in (a).
\item[(c)] On $B_{r(t_{i})}(0) - B_{(1-\alpha^\e)r(t_{i})}(0)$,
$$u(\cdot, t_{i}) \leq \phi(\cdot,
t_{i}) +  L(C+K) \alpha r(t_{i})$$
where $L$ is a positive constant depending on $n$ and $M$;  $C$ and
$K$ are given as in (a) and (b).
\end{itemize}
 Then for a constant $0<h=h(n,M)<1$,
the condition (a) holds with $C$ replaced by $hC$ at time
$t=t_{i+1}$, i.e.,
\begin{equation} \label{lem5}
u (\cdot, t_{i+1}) \leq (1+hC\alpha^{1-\e})\phi(\cdot, t_{i+1})
\end{equation}
 on
$B_{(1-\alpha^\e)r(t_{i+1})}(0)$. If we further assume
 \begin{equation} \label{phitilde}
 u(x,t) \leq (1+ C\alpha^{1-\e}) \tilde{\phi}(x,
t)
\end{equation}
 for $t_i \leq t \leq t_{i+1}$ and $x \in
B_{(1-\alpha^\e)r(t)}(0)$, then the condition (c) holds  with $C$
replaced by $hC$ at time $t=t_{i+1}$, i.e.,
\begin{equation} \label{lem55}
u(\cdot, t_{i+1}) \leq \phi(\cdot, t_{i+1}) +  L(hC+K) \alpha
r(t_{i+1})
\end{equation}
on $B_{r(t_{i+1})}(0) - B_{(1-\alpha^\e)r(t_{i+1})}(0)$.
\end{lemma}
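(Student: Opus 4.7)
My plan is to recast the lemma as a contraction statement for the caloric error $E := u - \phi$ on the Lipschitz cylinder $\Omega_1 \cap [t_i, t_{i+1}]$, and to extract the factor $h < 1$ from parabolic smoothing across a time step whose length $t_{i+1}-t_i \approx r(t_i)^2$ is parabolic-scale. First I would note that $E \geq 0$ is caloric in $\Omega_1 \cap [t_i, t_{i+1}]$ (since $\phi$ is caloric there with zero lateral data and $\phi(\cdot, t_i) \leq u(\cdot, t_i)$), and collect the parabolic-boundary bounds: by (a), $E(\cdot, t_i) \leq C\alpha^{1-\e}\phi(\cdot, t_i)$ on $B_{(1-\alpha^\e)r(t_i)}(0)$; by (c), $E(\cdot, t_i) \leq L(C+K)\alpha\, r(t_i)$ on the annular shell; and by (\ref{22222}) together with Lemma~\ref{cak}, $E = u \leq C_0(n,M)\alpha\, r(t_i)$ on $\partial \Omega_1 \cap [t_i, t_{i+1}]$.

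Next, using linearity of the heat equation, I would split $E \leq w_1 + w_2$, where $w_1$ is the caloric extension with initial data $C\alpha^{1-\e}\phi(\cdot, t_i)$ and zero lateral data (so $w_1 = C\alpha^{1-\e}\phi$ identically), and $w_2$ absorbs the residual annular initial data and the lateral contribution. Because $w_2$'s parabolic-boundary data has size $O((C+K)\alpha\, r(t_i))$ and is supported in the $\alpha^\e r(t_i)$-thick shell near $\partial \Omega_1$, the parabolic Lipschitz geometry of $\Omega_1$ (Lemma~\ref{lemma2}) together with Lemma~\ref{ca} and standard heat-kernel/harmonic-measure estimates yields $w_2(\cdot, t_{i+1}) \leq C_1(n,M)(C+K)\alpha\, r(t_i)$ on $B_{(1-\alpha^\e)r(t_{i+1})}(0)$; thanks to the hypothesis (\ref{K}), $(C+K)\alpha = O(\alpha^{(1+\e)/2} C)$ can be absorbed into the $hC\alpha^{1-\e}\phi$ budget once $\alpha(n,M)$ is chosen small.

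The heart of the argument is to convert the bound $w_1 = C\alpha^{1-\e}\phi$ (where $\phi$ here means the caloric continuation up to $t_{i+1}^-$) into $hC\alpha^{1-\e}\phi(\cdot, t_{i+1})$ (the reset value). Since $\phi^-(\cdot, t_{i+1})$ is a radial function with $\phi^- \leq u(\cdot, t_{i+1})$, maximality of $\tilde\phi(\cdot, t_{i+1}) = \phi(\cdot, t_{i+1})$ gives $\phi^- \leq \phi(\cdot, t_{i+1})$. The contraction will come from showing a strict improvement of this inequality on the inner ball: the lateral boundary data $u|_{\partial \Omega_1} \geq 0$ forces a Hopf-type margin that prevents $\phi^-$ from saturating $\phi(\cdot, t_{i+1})$, giving $\phi^-(\cdot, t_{i+1}) \leq (1 - (1-h))\phi(\cdot, t_{i+1})$ on $B_{(1-\alpha^\e)r(t_{i+1})}(0)$ for a dimensional $h < 1$ extracted from Lemmas~\ref{ca}--\ref{caf421} applied to the parabolic Lipschitz cylinder $\Omega_1$. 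Combining yields $u(\cdot, t_{i+1}) = \phi^- + E \leq \phi^- + C\alpha^{1-\e}\phi^- + w_2 \leq (1+hC\alpha^{1-\e})\phi(\cdot, t_{i+1})$, which is (\ref{lem5}).

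For the second part (\ref{lem55}), the extra hypothesis (\ref{phitilde}) propagates the oscillation bound, in terms of $\tilde\phi$, across the whole interval $[t_i, t_{i+1}]$, so that the argument above can be rerun on the annular shell $B_{r(t_{i+1})}(0) \setminus B_{(1-\alpha^\e)r(t_{i+1})}(0)$: near the inner edge we feed in the already-proved (\ref{lem5}) with constant $hC$, and near the outer free boundary (b) and (\ref{22222}) again give a lateral budget of order $(hC+K)\alpha\, r(t_{i+1})$, yielding the additive bound with the improved coefficient. The main obstacle I expect is the third paragraph, i.e., pinning down the contraction $h < 1$ quantitatively in terms of the parabolic Lipschitz constants of $\Omega_1$; the delicate point is to ensure the Hopf margin produced by the ACS estimates is uniform in $i$ (so that $h = h(n,M)$ is independent of $k$) and is not swamped by the $w_2$ error, which is where the precise scaling $\e = 2/3$ and the assumption (\ref{K}) become crucial.
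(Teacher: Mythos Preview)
Your proposed source of the contraction is wrong, and the argument cannot be repaired along those lines. You claim a ``Hopf-type margin'' giving $\phi^-(\cdot,t_{i+1})\leq h\,\phi(\cdot,t_{i+1})$ with a fixed $h=h(n,M)<1$, but no such inequality can hold. The gap $\phi(\cdot,t_{i+1})-\phi^-(\cdot,t_{i+1})$ is produced entirely by the non-radial perturbation (the lateral data $u|_{\partial\Omega_1}$ and the annular excess), all of which have size $O(\alpha\,r(t_i))$; hence $\phi(\cdot,t_{i+1})-\phi^-\leq C(n,M)\alpha^{1-\e}\phi^-$, not $(1-h)\phi^-$ with $h$ independent of $\alpha$. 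In fact your own estimates force a contradiction: from $u\leq (1+C\alpha^{1-\e})\phi^- + w_2$ together with $u\geq\phi(\cdot,t_{i+1})$ you would get $(1-h)\phi(\cdot,t_{i+1})\lesssim C\alpha^{1-\e}\phi^- + w_2$, which is impossible for small $\alpha$ if $h<1$ is fixed.

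The actual mechanism, which your outline misses, is not a contraction of $\phi^-$ but a contraction of the \emph{non-radial} part of the error. The paper decomposes $u=\psi_1+\psi_2+\psi_3+\psi_4$ in $\Omega_1\cap\{t_i<t<t_{i+1}\}$ (with $\psi_1$ carrying $\phi$, $\psi_2$ the inner-ball excess, $\psi_3$ the annular excess, $\psi_4$ the lateral data), and then subtracts from each $\psi_j$ its maximal radial minorant $\psi_{j1}$ at time $t_{i+1}$. Since $\phi(\cdot,t_{i+1})\geq\psi_1+\sum\psi_{j1}$ by maximality, one only needs $\sum\psi_{j2}\leq hC\alpha^{1-\e}\phi$. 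The factor $h_0<1$ for $\psi_{22}$ comes from a boundary-Harnack dichotomy: either $\psi_2\leq\tfrac12 C\alpha^{1-\e}\psi_1$ everywhere, or $\psi_2\geq h_1 C\alpha^{1-\e}\psi_1$ everywhere, in which case $\psi_{21}\geq h_1 C\alpha^{1-\e}\psi_1$ and $\psi_{22}=\psi_2-\psi_{21}\leq(1-h_1)C\alpha^{1-\e}\psi_1$. This is where the improvement lives, and it requires isolating the radial part that gets absorbed into the reset value $\phi(\cdot,t_{i+1})$; your $w_1,w_2$ splitting throws this away. A second, independent gap: your bound on the lateral piece $w_2$ controls only its size, not its oscillation on spheres. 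The paper needs the small angular period of $u$ (via Lemma~\ref{Di262}) to show $\psi_{42}=\mathrm{osc}_{\partial B_s}\psi_4\leq C K\alpha^{2-\e}r$, and this is precisely where $\e=2/3$ together with (\ref{K}) makes the arithmetic close; an absolute bound of order $K\alpha r$ on $\psi_4$ is not enough once $C$ becomes small in the iteration.
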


\begin{proof}
For the proof of (\ref{lem5}), construct  caloric functions
$\psi_1$, $\psi_2$, $\psi_3$ and $\psi_4$ in $\Sigma:= \Omega_1 \cap
\{t_i < t < t_{i+1}\}$ with the following boundary values
$$
\begin{array}{ll}
&\left\{\begin{array} {ll} \psi_1 = \phi
 &\hbox{ on  }\,\,\,  B_{r(t_i)}(0) \times \{t=t_i\}\\ \\
\psi_1 =0 &\hbox{ otherwise on }\,\,  \partial \Sigma
\end{array}\right. \\ \\
 &\left\{\begin{array} {ll} \psi_2= u-\phi
&\hbox{ on }\,\,\, B_{(1-\alpha^\e)r(t_i)}(0)  \times \{t=t_i\} \\
\\
\psi_2=0 &\hbox{ otherwise on }\,\,
\partial \Sigma
\end{array}\right.\\ \\
&\left\{\begin{array} {ll} \psi_3= u-\phi &\hbox{ on  }\,\,\,
B_{r(t_i)}(0)- B_{(1-\alpha^\e)r(t_i)}(0) \times \{t=t_i\}  \\ \\
 \psi_3= 0 &\hbox{otherwise  on  }\,\, \partial \Sigma
\end{array}\right. \\ \\
 &\left\{\begin{array} {ll}  \psi_4= u &\hbox{ on  }\,\,\, \partial
\Sigma \cap \{t_i <t <t_{i+1}\}\\ \\
\psi_4 =0  &\hbox{ otherwise  on
}\,\, \partial \Sigma.
\end{array}\right.
\end{array}
$$
Then   in $\Sigma$
$$u = \psi_1 + \psi_2+\psi_3+\psi_4$$
where $\psi_1$ is radially symmetric since $\phi$ is radially
symmetric and $\Omega_1$ is radial. For $j=2,3,4$, let
$\psi_{j1}(\cdot)$ be the maximal radial function on
$B_{r(t_{i+1})}(0)$ such that $\psi_{j1}(\cdot) \leq \psi_j(\cdot,
t_{i+1})$ and write $\psi_j(\cdot, t_{i+1}) = \psi_{j1}(\cdot)+
\psi_{j2}(\cdot)$. Since $\phi(\cdot, t_{i+1})$ is the maximal
radial function $ \leq u(\cdot, t_{i+1})$,
$$\phi(\cdot, t_{i+1}) \geq \psi_1(\cdot, t_{i+1}) + \psi_{21}(\cdot) +\psi_{31}(\cdot)+\psi_{41}(\cdot). $$
Hence for (\ref{lem5}), it suffices to prove
$$\psi_{22}(\cdot) +\psi_{32}(\cdot)+\psi_{42}(\cdot) \leq hC\alpha^{1-\e}\phi(\cdot, t_{i+1}).$$

\vspace{0.1in}

1. First, we prove that for some constant $0<h_0 =h_0(n,M) <1$
\begin{equation} \label{355}
\psi_{22}(\cdot)  \leq h_0C\alpha^{1-\e}\phi(\cdot, t_{i+1}).
\end{equation}
Suppose that at some $x \in B_{r(t_{i+1})}(0)$
\begin{equation} \label{35}
\psi_2(x,t_{i+1}) >  \frac{1}{2} C \alpha^{1-\e} \psi_1(x, t_{i+1}).
\end{equation}
(Otherwise, (\ref{355}) would hold with $h_0=1/2$ since $\psi_1\leq
\phi$ and $\psi_{22} \leq \psi_2$.) Since $\Sigma$ is Lipschitz in a
parabolic scaling (Lemma~\ref{lemma2}), Lemma~\ref{ca} imply that
near the vanishing  boundary $\partial B_{r(t_{i+1})}(0)$,
$\psi_1(\cdot, t_{i+1})$ and $\psi_2(\cdot, t_{i+1})$ are comparable
to some harmonic functions vanishing on $\partial
B_{r(t_{i+1})}(0)$. Hence (\ref{35}) implies that on
$B_{r(t_{i+1})}(0)$
$$
\psi_2(\cdot,t_{i+1}) \geq h_1 C \alpha^{1-\e} \psi_1(\cdot,t_{i+1})
$$
 for some constant $0<h_1 =h_1(n,M)<1$.
Since $\psi_1$ is radially symmetric and $\psi_{21}$ is the maximal
radial function $\leq \psi_2$, the above inequality implies
\begin{equation} \label{366}
 \psi_{21} \geq h_1 C \alpha^{1-\e} \psi_1(\cdot,t_{i+1}).
\end{equation}
Let $h_0=1-h_1$, then on $B_{r(t_{i+1})}(0)$
\begin{eqnarray*}
\psi_{22}(\cdot) = \psi_2(\cdot,t_{i+1}) -\psi_{21}(\cdot) &\leq& C
\alpha^{1-\e} \psi_1(\cdot,t_{i+1}) -\psi_{21} (\cdot)\\  &\leq&
(1-h_1)C \alpha^{1-\e} \psi_1(\cdot,t_{i+1})
\\ &\leq& h_0C \alpha^{1-\e} \phi(\cdot,t_{i+1})
\end{eqnarray*}
where the first inequality follows from the assumption (a) with the
construction of $\psi_1$ and $\psi_2$, and the second inequality
follows from (\ref{366}). Hence we obtain the upper bound
(\ref{355}) of $\psi_{22}$.

\vspace{0.1in}

2. Next we show
\begin{equation} \label{356}
\psi_{32}(\cdot)  \leq \frac{1-h_0}{3}C\alpha^{1-\e}\phi(\cdot,
t_{i+1}).
\end{equation}
By the assumption (c) with the construction of $\psi_3$,
$$\max \psi_3(\cdot, t_{i})  \leq L(C+K) \alpha  r(t_i)$$
on the annulus $R:=B_{r(t_{i})}(0) - B_{(1-\alpha^\e)r(t_{i})}(0)$.
Let $|R|$ denote the volume of  $R$, then $|R| \approx \alpha^\e
|B_{r(t_i)}(0)|$. Hence there exists a small constant
$c(\alpha^\e,n)>0$ such that
\begin{equation} \label{Calpha}
\max \psi_3(\cdot, t_{i+1}) \leq c(\alpha^\e,n) L(C+K) \alpha  r(t_i)
\end{equation}
where we can observe $c(\alpha^\e,n) \rightarrow 0$ as $\alpha \rightarrow 0$.
Since $r(t_i) \approx r(t_{i+1}) \approx \max \psi_1(\cdot,
t_{i+1})$ (Lemma~\ref{lemma1}),
\begin{equation} \label{377}
\max \psi_3(\cdot, t_{i+1}) \leq c(\alpha^\e,n) C_0(n,M) L(C+K)
\alpha \max \psi_1(\cdot, t_{i+1})
\end{equation}
for some $C_0(n,M)>0$. Hence  on $B_{r(t_{i+1})}(0)$
\begin{eqnarray}
\psi_{32}(\cdot) \leq \psi_3(\cdot,t_{i+1}) &\leq& c(\alpha^\e,n)
C_1(n,M)L(C+K) \alpha  \psi_1(\cdot, t_{i+1}) \nonumber \\ &\leq&
c(\alpha^\e,n)
C_1(n,M)L(C+K) \alpha  \phi(\cdot, t_{i+1}) \nonumber \\
&\leq& c(\alpha^\e,n) C_2(n,M)L C \alpha^{\frac{\e+1}{2}}
\phi(\cdot,
t_{i+1}) \nonumber \\
&\leq& \frac{1-h_0}{3}C\alpha^{1-\e}\phi(\cdot, t_{i+1})
\label{newnew}
\end{eqnarray}
where $C_1(n,M)$ and $C_2(n,M)$ are positive constants depending on
$n$ and $M$, the first inequality follows from the almost
harmonicity of $ \psi_3(\cdot, t_{i+1})$ and $\psi_1(\cdot,
t_{i+1})$ with (\ref{377}), the second inequality follows since
$\psi_1 \leq \phi$,
 the third inequality follows from (\ref{K}), and the
 last inequality follows  if  $\alpha < \alpha(n,M)$ since $\e=2/3$ and $L$ and
$h_0$ are constants depending on $n$ and $M$.

\vspace{0.1in}

 3.  Since $\psi_{42}(\cdot)= \psi_4(\cdot, t_{i+1}) -\psi_{41}(\cdot)$
 where
$\psi_{41}(\cdot)$ is the maximal radial function $\leq
\psi_4(\cdot, t_{i+1})$,
\begin{eqnarray}
\max_{\partial B_s(0)} \psi_{42}(\cdot) &=& \max_{\partial
B_s(0)}\psi_4(\cdot, t_{i+1})- \psi_{41}|_{\partial B_s(0)}
\nonumber \\&=& \max_{\partial B_s(0)}\psi_4(\cdot, t_{i+1})-
\min_{\partial B_s(0) } \psi_4(\cdot, t_{i+1}) \label{4222}
\end{eqnarray}
for $0 \leq s\leq r(t_{i+1})$. Below we prove that the right hand
side of (\ref{4222}) is bounded from above by $$
\frac{1-h_0}{3}C\alpha^{1-\e}\phi(\cdot, t_{i+1})$$ if $0\leq s \leq
(1-\alpha^\e)r(t_{i+1})$.

Let $0 < s \leq (1-\alpha^\e)r(t_{i+1})$. Let $x_1$ be a  maximum
point of $\psi_4$ on $\partial B_s(0)$, i.e., $$ \max_{\partial
B_s(0) } \psi_4(\cdot, t_{i+1})  = \psi_4(x_1, t_{i+1}). $$ Since
$\psi_4$ is periodic in angle with period $\leq \alpha$, there
exists a minimum point $x_2$ of $\psi_4$  on $\partial B_s(0)$ such
that
$$ \min_{\partial B_s(0) } \psi_4(\cdot, t_{i+1})  =
\psi_4(x_2, t_{i+1})
$$ and $$
|x_1-x_2| < C_n\alpha r(t_{i+1})
$$ where $C_n$ is a dimensional constant. Recall that $\psi_4$ is a nonnegative
caloric function  in $\Sigma$ with
\begin{equation} \label{newnewnew}
\max_\Sigma \psi_4 = \max_{\partial \Omega_1 \cap \{t_i <
t<t_{i+1}\}} u \leq C(n,M) \alpha r(t_i) \leq C(n,M) K \alpha r(t_i)
\end{equation}
 where the first inequality
follows from (\ref{22222}) and the last inequality follows if
$K\geq1$. Now apply Lemma~\ref{Di262} for $\psi_4$ with $\delta =
\alpha^\e r(t_{i+1})$ and $\sigma =C_n \alpha r(t_{i+1})/\delta$.
Then the upper bound (\ref{newnewnew}) of $\psi_4$ implies that on
$Q^-_{\sigma \delta}(x_1, t_{i+1})$
$$ |\nabla \psi_4| \leq
\frac{ C(n,M)K \alpha r(t_i)}{ \alpha^\e r(t_{i+1})}.
$$  Since $|x_1-x_2| <C_n
\alpha r(t_{i+1}) = \sigma \delta$, the above bound on $| \nabla
\psi_4|$ yields that
\begin{eqnarray} \label{homogenization}
\max_{\partial B_s(0) } \psi_4(\cdot, t_{i+1}) - \min_{\partial
B_s(0) } \psi_4(\cdot, t_{i+1}) &=&
 \psi_4(x_1, t_{i+1}) -\psi_4(x_2, t_{i+1})
\nonumber \\
& \leq & C(n,M) K \alpha^{2-\e} r(t_{i}) \nonumber \\
& \leq & C_1 K \alpha^{2-\e} r(t_{i+1})
\end{eqnarray}
 for a constant $C_1$ depending on $n$ and $M$. Here, we can bound the
right hand side of (\ref{homogenization}) as below:
\begin{eqnarray*}
C_1K \alpha^{2-\e} r(t_{i+1}) &\leq& C_1 K \alpha^{2-2\e}
\min\{\phi(x, t_{i+1}): x \in B_{(1-\alpha^\e)r(t_{i+1})}(0)\} \\
&\leq& C_1 C\alpha^{3(1-\e)/2} \min \{\phi(x, t_{i+1}) : x \in
B_{(1-\alpha^\e)r(t_{i+1})}(0)\} \\&\leq &
\frac{1-h_0}{3}C\alpha^{1-\e}\min\{\phi(\cdot, t_{i+1}): x \in
B_{(1-\alpha^\e)r(t_{i+1})}(0)\}
\end{eqnarray*}
 where $C_1$'s are constants depending on $n$ and $M$, 
 the first inequality follows from (\ref{32}),
 the second inequality follows from (\ref{K}), and the
last inequality follows if $ \alpha <  \alpha(n,M)$ since $\e=2/3$ and $h_0$ is a constant depending on $n$ and $M$.
Combining the above inequality with (\ref{4222}) and
(\ref{homogenization})
 we obtain that
\begin{equation} \label{422}
\psi_{42}(\cdot, t_{i+1}) \leq
\frac{1-h_0}{3}C\alpha^{1-\e}\phi(\cdot, t_{i+1})
\end{equation}
on $B_{(1-\alpha^\e)r(t_{i+1})}(0)$.

 From the upper bounds (\ref{355}), (\ref{356}) and (\ref{422}) on
$\psi_{22}$, $ \psi_{32}$ and $\psi_{42}$, we conclude
$$\psi_{22} + \psi_{32} + \psi_{42} \leq \frac{h_0+ 2}{3} C\alpha^{1-\e}\phi(\cdot,
t_{i+1})$$ on $B_{(1-\alpha^\e)r(t_{i+1})}(0)$.  Hence
 the first
part of the lemma, that is  (\ref{lem5}), follows for the constant
$h := (h_0+ 2)/3<1$. \vspace{0.1in}

Next we prove the second part of the lemma, that is (\ref{lem55}).
Let $\Pi$ be a thin subregion of $ \Omega_1 \cap \{t_{i} \leq t\leq
t_{i+1}\}$ such that
$$\Pi_t= B_{r(t)}(0)-B_{(1-\alpha^\e)r(t)}(0) $$
for $t_{i} \leq t\leq t_{i+1}$.  Decompose $u$ into a sum of three
caloric functions $w_1$, $w_2$ and $w_3$, which are defined in
   $\Pi$ with
 the following boundary conditions
$$
\begin{array}{ll}
&\left\{\begin{array} {ll} w_1(\cdot, t) = u (\cdot, t)
 &\hbox{ on  } \partial B_{(1-\alpha^\e)r(t)}(0) \hbox{ for } t_{i} \leq t\leq t_{i+1}
 \\ \\
w_1 =0 &\hbox{ otherwise on }  \partial \Pi
\end{array}\right. \\ \\
&\left\{\begin{array} {ll} w_2(\cdot, t) = u (\cdot, t)
 &\hbox{ on  } \partial B_{r(t)}(0) \hbox{ for } t_{i} \leq t\leq t_{i+1}
 \\ \\
w_2 =0 &\hbox{ otherwise on }  \partial \Pi
\end{array}\right.\\ \\
&\left\{\begin{array} {ll} w_3(\cdot, t_{i})=  u (\cdot, t_{i})
 &\hbox{ on  } B_{r(t_{i})}(0)-B_{(1-\alpha^\e)r(t_{i})}(0)  \\ \\
w_3 =0 &\hbox{ otherwise on }  \partial \Pi.
\end{array}\right.
\end{array}
$$
Observe  $u = w_1 + w_2 + w_3$ in $\Pi$. Let $w_{11}$ be the maximal
radial function such that $w_{11}(\cdot) \leq w_1(\cdot, t_{i+1})$
and let $w_{31}$ be the maximal radial function such that
$w_{31}(\cdot) \leq w_3(\cdot, t_{i+1})$. Then on
$B_{r(t_{i+1})}(0)-B_{(1-\alpha^\e)r(t_{i+1})}(0) \times
\{t=t_{i+1}\}$
$$u-\phi \leq (w_1-w_{11}) + w_2 +
(w_3-w_{31})$$ since $\phi(\cdot, t_{i+1})$ is the maximal radial
function $\leq u(\cdot, t_{i+1})$. Hence for (\ref{lem55}), it
suffices to prove that the right hand side of the above inequality
is bounded by $L(hC+K) \alpha r(t_{i+1})$.

\vspace{0.1in}

1. Bound on $w_1-w_{11}$:
\begin{eqnarray*}
w_1 (\cdot, t_{i+1}) - w_{11}(\cdot) &\leq& C \alpha^{1-\e}
w_{11}(\cdot) \\&\leq&  C \alpha^{1-\e}  C_1(n,M)\alpha^\e
r(t_{i+1})
\\ &\leq&
\frac{h C L}{2}   \alpha r(t_{i+1})
\end{eqnarray*}
 where the first inequality follows
from the assumption (\ref{phitilde}) since $$w_1=u \leq \tilde{\phi}
+ C \alpha^{1-\e}\tilde{\phi}
$$ on  $ \partial B_{(1-\alpha^\e)r(t)}(0) \times \{t\}$ and
$w_{11}$ is bounded below by a radial caloric function in $\Pi$ with
boundary value $\tilde{\phi}$ on $ \partial B_{(1-\alpha^\e)r(t)}(0)
\times \{t\}$,
 the second inequality follows from $\max
w_{11} \leq \displaystyle{\max_\Pi} u \leq C_1(n,M) \alpha^\e
r(t_{i+1})$, and the last inequality holds if $L \geq L(n,M)$.

\vspace{0.1in}

2. Bound on $w_2$: For $t_{i} \leq t \leq t_{i+1}$,
\begin{equation} \label{new}
\max_{\partial B_{r(t)}(0)} u(\cdot, t) \leq C(n,M) \alpha r(t) \leq
C(n,M) K\alpha r(t)
\end{equation}
where the first inequality follows from  (\ref{22222}) and the last
inequality follows if $K\geq 1$.
 Hence from the construction of
$w_2$,
\begin{eqnarray*}
\max w_2(\cdot, t_{i+1}) &\leq& C(n,M)K\alpha r(t_{i+1})  \\&\leq&
\frac{ L K }{2} \alpha r(t_{i+1})
\end{eqnarray*}
where  the last inequality holds if $L \geq L(n,M)$.

\vspace{0.1in}

 3. Bound on $w_3-w_{31}$:  A similar argument as in (\ref{377}) shows that
(c) implies
$$w_3(\cdot, t_{i+1})-w_{31}(\cdot)  \leq c(\alpha^\e,n) C(n,M) L(C+K)
\alpha r(t_{i+1})$$ where   $c(\alpha^\e,n)$ is a constant given as in (\ref{Calpha}). Then for     $\alpha<\alpha(n,M)$, 
$$w_3(\cdot, t_{i+1})-w_{31}(\cdot) \leq \frac{hL(C+K)}{2}  \alpha
r(t_{i+1}).$$ Combing the above bounds on $w_1-w_{11}$, $w_2$, and
$w_3-w_{31}$, we conclude
\begin{eqnarray*}
u(\cdot, t_{i+1})-\phi (\cdot, t_{i+1})& \leq& w_1(\cdot,
t_{i+1})-w_{11} + w_2(\cdot, t_{i+1}) + w_3(\cdot, t_{i+1})-w_{31}
\\&\leq&  L(hC+K) \alpha r(t_{i+1})
\end{eqnarray*}
\end{proof}

\noindent {\bf Remark 6.} In Lemma~\ref{lemma5}, we assume $K\geq 1$  for
simplicity of the proof. In fact, this assumption is used only in the proofs of (\ref{newnewnew}) and (\ref{new}), where we find a bound on $\max_{\partial
 \Omega_1}u$. Later in the paper, we modify and improve the inner
 region $\Omega_1$ so that  (\ref{newnewnew}) and (\ref{new}) are
 guaranteed also for $K < 1$ (see Corollary~\ref{cor2}). Hence, we will be able to iterate Lemma~\ref{lemma5} and Lemma~\ref{lemma6} for a
 decreasing sequence of $K<1$.

\begin{corollary} \label{cor1}
Let $0<h<1$ and $\e=2/3$ be as in Lemma~\ref{lemma5}.  Let $m$ be
the largest integer satisfying
$$ 1 < \alpha^{\frac{\e-1}{2}}h^{m}. $$
 Then  for $j \geq m+2$,  (a) and (c) of Lemma~\ref{lemma5} hold with $C$ replaced by
$\alpha^{\frac{1-\e}{2}}C$ at time $t=t_j$.  Here  $C>0$ is a
constant depending on $n$ and $M$.
\end{corollary}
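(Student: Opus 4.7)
The plan is to iterate Lemma~\ref{lemma5} from a base case at $t=t_2$: each application contracts the constant $C$ by a factor of $h$, so after $m$ iterations the constant drops from an initial $C_0=C_0(n,M)$ to $h^m C_0$. The defining inequality $1<\alpha^{(\e-1)/2}h^m$ (equivalently $h^m>\alpha^{(1-\e)/2}$), combined with its failure at $m+1$ (yielding $h^m\leq h^{-1}\alpha^{(1-\e)/2}$), pins $h^m$ into the band $(\alpha^{(1-\e)/2},\alpha^{(1-\e)/2}/h]$, so that $h^m C_0\leq (C_0/h)\,\alpha^{(1-\e)/2}=C\alpha^{(1-\e)/2}$ with $C:=C_0/h$, which is exactly the target constant.

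For the base case at $t=t_2$, Lemma~\ref{lemma4} directly supplies (a) with some $C_0=C_0(n,M)$, while the intermediate bound (\ref{refrefr}) appearing in its proof, together with Lemma~\ref{lemma1}, yields (c) at $t_2$ once $L=L(n,M)$ is chosen large enough. After enlarging $C_0$ we may assume $C_0\geq K$, where $K=K(n,M)\geq 1$ is the flatness constant from Lemma~\ref{lemma3}.

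For the inductive step, suppose (a) and (c) hold at $t_i$ with constant $C_i:=h^{i-2}C_0$ for some $2\leq i\leq m+1$. Hypothesis (b) of Lemma~\ref{lemma5} with $C_i$ reads $K<\alpha^{(\e-1)/2}C_i$; since $i-2\leq m-1$ we have $\alpha^{(\e-1)/2}h^{i-2}>h^{-1}>1$, and hence $\alpha^{(\e-1)/2}C_i>C_0\geq K$. For the second part of Lemma~\ref{lemma5}, needed to propagate (c), one further needs (\ref{phitilde}) with the same shrinking $C_i$ on $[t_i,t_{i+1}]$. A direct oscillation argument (angular periodicity of $u$, Lemma~\ref{cak}, and the lower bound $\tilde\phi\geq c(n,M)\alpha^\e r(t)$ on $B_{(1-\alpha^\e)r(t)}(0)$, cf.\ (\ref{32})) gives (\ref{phitilde}) only with a fixed constant $C(n,M)$; to reach the sharper version with $C_i$ I would propagate (a) at $t_i$ across the slab $[t_i,t_{i+1}]$ by the maximum principle applied to $u-(1+C_i\alpha^{1-\e})\phi$ in $\Omega_1\cap\{t_i<t<t_{i+1}\}$, controlling the small lateral leakage on $\partial\Omega_1$ via (\ref{22222}) and the Lipschitz geometry from Lemma~\ref{lemma2}, and then using $\phi\leq\tilde\phi$. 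Lemma~\ref{lemma5} then delivers (a) and (c) at $t_{i+1}$ with $C_{i+1}=hC_i$.

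Iterating $m$ times yields (a) and (c) at $t_{m+2}$ with constant $C_{m+2}=h^m C_0\leq C\alpha^{(1-\e)/2}$. For $j>m+2$ the bound persists: hypothesis (b) still holds at the fixed constant $\alpha^{(1-\e)/2}C$, since $\alpha^{(\e-1)/2}\cdot\alpha^{(1-\e)/2}C=C\geq K$, and further applications of Lemma~\ref{lemma5} can only strengthen the estimate. The main obstacle is establishing (\ref{phitilde}) with the iteratively shrinking $C_i$ rather than with a fixed constant; this requires the careful interior-to-boundary propagation sketched above, exploiting the almost-harmonicity of $\phi$ near $\partial\Omega_1$ and the thin boundary layer of width $\alpha^\e r(t)$ where $\tilde\phi$ is comparable to $\alpha^\e r(t)$.
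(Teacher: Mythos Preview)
Your overall strategy---iterate Lemma~\ref{lemma5} $m$ times starting from the base case at $t_2$ supplied by Lemmas~\ref{lemma3} and~\ref{lemma4}, with $m$ determined exactly by the constraint (\ref{K})---is correct and is what the paper does. Your base-case verification, your bookkeeping on $h^m$ versus $\alpha^{(1-\e)/2}$, and your checking of hypothesis (b) along the iteration are all fine.

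The gap is precisely where you flagged it: obtaining (\ref{phitilde}) with the \emph{shrinking} constant $C_i$ at intermediate times $\tau\in(t_i,t_{i+1})$. Your proposed maximum-principle propagation of $u-(1+C_i\alpha^{1-\e})\phi$ across $\Omega_1\cap\{t_i<t<t_{i+1}\}$ does not close. On $\partial\Omega_1$ one has $\phi=0$ and only the fixed bound $u\le C(n,M)\alpha r(t)$ from (\ref{22222}); on the inner ball this converts (via $\phi\ge c(n,M)\alpha^\e r(t)$) to an additive error $C'(n,M)\alpha^{1-\e}$ in the multiplicative bound, so you end up with $u\le (1+(C_i+C'(n,M))\alpha^{1-\e})\phi$, which does not improve beyond the base case. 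The passage $\phi\le\tilde\phi$ cannot absorb this term, since there is no useful lower bound on $\tilde\phi-\phi$.

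The paper's device is different and worth noting: for each $\tau\in(t_i,t_{i+1})$ one \emph{re-runs} the decomposition $u=\psi_1+\psi_2+\psi_3+\psi_4$ from the proof of (\ref{lem5}), but on a shifted partition $s_{i-1}=t_{i-1}$, $s_i=\tau$ (so $\tau$ becomes a node). Because $\tilde\phi(\cdot,\tau)$ is by definition the maximal radial function $\le u(\cdot,\tau)$, the radial parts $\psi_{j1}$ of $\psi_j(\cdot,\tau)$ are absorbed into $\tilde\phi(\cdot,\tau)$, and one only needs smallness of the non-radial remainders $\psi_{j2}$---which is exactly what the three steps of Lemma~\ref{lemma5} establish (with the already-improved constant at $t_{i-1}$). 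This yields $u(\cdot,\tau)\le(1+hC\alpha^{1-\e})\tilde\phi(\cdot,\tau)$ directly, without any uncontrolled leakage term, and feeds the next iteration.
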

\begin{proof}
 By Lemma~\ref{lemma3} and Lemma~\ref{lemma4}, the
conditions (a) and (b) of Lemma~\ref{lemma5} are satisfied with
$C=K=C(n,M) \geq 1$ for $i \geq 2$. The condition (c) of
Lemma~\ref{lemma5} follows from  (\ref{refrefr}) for $i \geq 2$.
Also the condition (\ref{phitilde}) holds for $t \geq t_2$ by
Lemma~\ref{lemma4} with $\phi \leq \tilde{\phi}$. Hence applying
Lemma~\ref{lemma5}, we obtain that the conditions (a) and (c) hold
with $C$ replaced by $hC$ for $i \geq 3$.

On the other hand, the inequality (\ref{K}) of the condition (b)
holds for the constants $K$ and $hC$, since  $K=C<
\alpha^{\frac{\e-1}{2}}hC$ for $\alpha< \alpha(n,M)$.  Hence the condition (b) holds for $i
\geq 3$ with the improved constant $hC$.

Next we verify the condition (\ref{phitilde}) with $C$ replaced by
$hC$ for $t \geq t_3$. Fix  $\tau \in (t_{i}, t_{i+1})$ for $i \geq
3$. Decompose the time interval $(0,T)$ so that $$0<
s_1=t_1<...<s_{i-1}=t_{i-1} <s_i=\tau <s_{i+1} =t_{i+2}<...<T.$$
Then by a similar argument as in the proof of (\ref{lem5}) of
Lemma~\ref{lemma5},
$$
u (\cdot, \tau) \leq (1+ hC \alpha^{1-\e})\tilde{\phi}(\cdot, \tau)
\,\,\hbox{ on }\,\,B_{(1-\alpha^\e)r(\tau)}(0).$$ Hence the
condition (\ref{phitilde}) is satisfied for  $t\geq t_3$ with $C$
replaced by $hC$. Then applying Lemma~\ref{lemma5} for $i\geq 3$, we
obtain a better constant $h^2C$ for the conditions (a) and (c) for
$i \geq 4$.

 Now
let $m$ be the largest integer satisfying
$$ 1 < \alpha^{\frac{\e-1}{2}}h^{m}. $$
Then the inequality (\ref{K}) holds with $C$ replaces by $h^jC$ for
$1 \leq j\leq m$. Hence we can iterate Lemma~\ref{lemma5}  m times,
as above, and obtain  the improved constant
$\alpha^{\frac{1-\e}{2}}C$ in (a) and (c) for $i \geq m+2$. In other
words, for $i \geq m+2$
$$
u (\cdot, t_i) \leq (1+ \alpha^{\frac{1-\e}{2}}C
\alpha^{1-\e})\phi(\cdot, t_i)
$$
 on
$B_{(1-\alpha^\e)r(t_i)}(0)$ and
$$
u(\cdot, t_i) \leq \phi(\cdot, t_i) +  L( \alpha^{\frac{1-\e}{2}}C
+K) \alpha r(t_i)
$$
on $B_{r(t_i)}(0) - B_{(1-\alpha^\e)r(t_i)}(0)$.
\end{proof}

\section{Improvement on flatness by  interior estimate;\\
Asymptotic behavior of free boundary} \label{sec7} In this section
we show that the improved interior estimate, as in
Corollary~\ref{cor1}, propagates to the free boundary at later times
and gives an improved estimate on the location of the free boundary.
More precisely, we improve the constant $K$ in condition (b) of
Lemma~\ref{lemma5}, using the improved constants in the conditions
(a) and (c).

\begin{lemma} \label {lemma6}
Suppose that (a), (b) and (c) of Lemma~\ref{lemma5} hold for $i\geq
i_0$ with $C=\beta$, a small constant. Then for $i \geq i_0+2$, (b)
and (c) holds with $K$ replaced by $C_1\beta$, for a constant
$C_1>0$  depending on $n$ and $M$. In other words,   $\Gamma_t(u)$
is contained in $B_{(1+ C_1 \beta \alpha)r(t)}(0) -B_{r(t)}(0)$ for
$ t \geq t_{i_0+2}$, and
$$u(\cdot, t_{i}) \leq \phi(\cdot,
t_{i}) +  L(\beta+ C_1\beta) \alpha r(t_{i})$$
 on $B_{r(t_{i})}(0) - B_{(1-\alpha^\e)r(t_{i})}(0)$ for $i \geq i_0+2$.
\end{lemma}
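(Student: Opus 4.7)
The plan is to mirror the supersolution construction of Lemma~\ref{lemma3}, but with the outer lateral boundary replaced by the sharper sphere $\partial B_{(1+C_1\beta\alpha)r(t)}(0)$. Concretely, I introduce a radial space-time region $\tilde\Omega_2\supset\Omega_1$, Lipschitz in parabolic scaling, with $\tilde\Omega_{2,t}=B_{\tilde R(t)}(0)$, where $\tilde R(t)=(1+K\alpha)r(t)$ on $[t_{i_0},t_{i_0+1}]$ (so that it contains $\Omega_t(u)$ there by assumption (b)), transitions smoothly to $(1+C_1\beta\alpha)r(t)$ on $[t_{i_0+1},t_{i_0+2}]$, and equals $(1+C_1\beta\alpha)r(t)$ for $t\ge t_{i_0+2}$. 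I then solve for the caloric function $w$ in $\tilde\Omega_2\setminus\Omega_1$ with $w=u$ on $\{t=t_{i_0}\}\cup\partial\Omega_1$ and $w=0$ on $\partial\tilde\Omega_2$. The improvement of (b) with $K$ replaced by $C_1\beta$ follows from $u\le w$, which holds once one verifies that $|\nabla w|<1$ on $\partial\tilde\Omega_2\cap\{t\ge t_{i_0+2}\}$ so that $w$ is a supersolution of $(P)$ there.

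To bound $|\nabla w|$ on the outer boundary I decompose $w=w_1+w_2$ on $\Pi=(\tilde\Omega_2\setminus\Omega_1)\cap\{t_{i_0}\le t\le t_{i_0+2}\}$, paralleling the inductive step of Lemma~\ref{lemma3} (eqs.~(\ref{27})--(\ref{2555})): $w_1$ carries the initial datum $u(\cdot,t_{i_0})$, and $w_2$ the lateral datum $u|_{\partial\Omega_1}$, each with vanishing data on the remaining parabolic boundary. For $w_1$, the initial datum is supported on an annulus of thickness $K\alpha r(t_{i_0})$ with amplitude $\le C(n,M)K\alpha r(t_{i_0})$ (via $|\nabla u|\le C_0 M$ and (b)); parabolic smoothing over the time span $t_{i_0+2}-t_{i_0}\sim r(t)^2$, which greatly exceeds the squared spatial scale $(K\alpha r)^2$, reduces $\max w_1$ to $\le C(n,M)K^2\alpha^2 r(t)$ for $t\ge t_{i_0+2}$, which is negligible against $C_1\beta\alpha r(t)$ for $\alpha<\alpha(n,M)$. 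The delicate step is the bound on $w_2$: the coarse periodicity estimate $u|_{\partial\Omega_1}\le C(n,M)\alpha r$ from (\ref{22222}) alone only yields $|\nabla w_2|\lesssim 1/(C_1\beta)$ at $\partial\tilde\Omega_2$, which is insufficient.

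For the refined estimate, I split $u|_{\partial\Omega_1}$ into its angular mean and non-radial oscillation. The non-radial piece is periodic in angle with period $\le\alpha$ and amplitude $\le C(n,M)\alpha r$, so by the homogenization argument used around (\ref{homogenization}) (via Lemma~\ref{Di262}), its caloric extension across the thin outer annulus contributes a gradient $\lesssim C(n,M)\alpha$ at $\partial\tilde\Omega_2$, which is $o(1)$. The angular mean is bounded by $C(n,M)\beta\alpha r$ by combining (a) with $C=\beta$ and the almost-harmonicity of $u-\phi$ near the Lipschitz boundary $\partial\Omega_1$ (Lemma~\ref{ca} applied to the caloric extension of $u-\phi$ in $\Omega_1$, which vanishes in its radial mean at $\partial B_{(1-\alpha^\e)r(t)}$ up to order $\beta\alpha r$ by (a)); propagating through the inner boundary layer of width $\alpha^\e r$ preserves this $O(\beta\alpha r)$ scale for the radial mean at $\partial\Omega_1$. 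The caloric extension of this small radial datum across the thin outer annulus of width $C_1\beta\alpha r$ has gradient $\lesssim C(n,M)/C_1$ at $\partial\tilde\Omega_2$, which is $<1$ if $C_1=C_1(n,M)$ is chosen sufficiently large. Combining yields $|\nabla w|<1$ and hence (b) with $K_{\mathrm{new}}=C_1\beta$; the new (c) then follows by reapplying the $w_2$-step in the proof of (\ref{lem55}) with the old $K$ replaced by $K_{\mathrm{new}}=C_1\beta$ in the bound $\max_{\partial B_{r(t)}}u\le C(n,M)K\alpha r$ used there.

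The main obstacle is precisely the $O(\beta\alpha r)$ bound on the angular mean of $u|_{\partial\Omega_1}$: the hypotheses (a)--(c) only provide $u|_{\partial\Omega_1}\le L(\beta+K)\alpha r$ (from (c), with the old, possibly large $K$) or the $K$-independent $C(n,M)\alpha r$ (from periodicity), neither of which scales like $\beta$. The improvement must be extracted by exploiting the radial structure of $\phi$ together with the pointwise closeness $u\le(1+\beta\alpha^{1-\e})\phi$ on $B_{(1-\alpha^\e)r}$ from (a), which constrains the \emph{radial mean} of $u-\phi$ at $\partial\Omega_1$ much more tightly than any pointwise bound available on the thin annulus; this is exactly the mechanism by which one application of Lemma~\ref{lemma6} converts the $C=\beta$ interior improvement into a flatness improvement of a full factor $\beta/K$ at the free boundary.
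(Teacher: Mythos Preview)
Your strategy diverges from the paper's, and the place where it breaks is exactly the one you flag yourself: the claimed $O(\beta\alpha r)$ bound on the angular mean of $u|_{\partial\Omega_1}$. You argue that (a) gives $u-\phi\le\beta\alpha^{1-\e}\phi\sim\beta\alpha r$ on $\partial B_{(1-\alpha^\e)r}$, and then assert that ``propagating through the inner boundary layer of width $\alpha^\e r$ preserves this $O(\beta\alpha r)$ scale for the radial mean at $\partial\Omega_1$.'' But the radial mean $\overline{u-\phi}$, while caloric in $\Omega_1$, has no smallness information at $\partial B_{r(t)}$---that is precisely the quantity you are trying to bound. The maximum principle on the annulus $B_r\setminus B_{(1-\alpha^\e)r}$ only yields $\overline{u-\phi}\le\max\{L(\beta+K)\alpha r,\,C(n,M)\alpha r\}$, which does not scale like $\beta$. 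Without this bound, your caloric extension across the outer annulus of width $C_1\beta\alpha r$ has gradient $\sim 1/(C_1\beta)$, and the supersolution fails.

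The paper circumvents this obstruction by an entirely different construction: it never attempts to bound $u|_{\partial\Omega_1}$ by $\beta\alpha r$. Instead it builds a radial subsolution $w=w_1+w_2\le u$ in the \emph{inner} annulus $\Sigma'_t=B_{r(t)}\setminus B_{(1-\alpha^\e)r(t)}$ (with $w_1$ carrying the caloric extension of $\phi$ and $w_2$ carrying the boundary data $\tilde\psi_4$), and then defines the supersolution by a multiplicative dilation
\[
v(x,t)=g(x,t)\,w(f(t)x,t),
\]
where $f$ interpolates linearly from $1-C_1K\alpha$ to $1-2C_1\beta\alpha$ on $[t_{i_0+1},t_{i_0+2}]$ and $g$ is harmonic in the dilated annulus with boundary values $1$ and $1-C_1K\alpha$. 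The point is that on the outer boundary $\partial B_{r(t)/f(t)}$ one has $|\nabla v|=g\,f\,|\nabla w|\le(1-2C_1\beta\alpha)|\nabla w|\le 1$, the last inequality because $w\le u$ and $\partial\Omega_1$ touches $\Gamma(u)$. Supercaloricity of $v$ comes from the sign of $\nabla g\cdot\nabla w$ (both radial, opposite monotonicity), which dominates the lower-order error terms. The comparison $v\ge u$ on the parabolic boundary of $\Pi$ uses only (a), (b), (c) with the \emph{old} $K$, and the shrinking of the outer radius from $r/f(t_{i_0+1})$ to $r/f(t_{i_0+2})$ is what delivers the improved trapping width $\sim C_1\beta\alpha r$ at $t_{i_0+2}$. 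This dilation trick is the missing ingredient in your approach.
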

\begin{proof}
To prove the lemma at $t=t_{i_0+2}$, we will construct a radially
symmetric caloric function $w\leq u $ and a radially symmetric
supercaloric function $v \geq u$ such that  their free boundaries
$\Gamma_{t_{i_0+2}}(w)$ and $\Gamma_{t_{i_0+2}}(v)$ are located in
the  $C_1\beta \alpha r(t_{i_0+2})$-neighborhood of each other.
Recall that $u$ is well approximated by a radial function $\phi$ on
each dyadic time interval. However the function $\phi$ (or $w_1$
which will be constructed below) does not catch up the change in
values of $u$ caused by the displacement of the free boundary from
$\partial \Omega_1$. Hence we modify the approximating function
$w_1$ by adding an auxiliary function $w_2$, and construct two
caloric functions $w\leq u$ and $v \geq u$ using the modified
approximation, $w_1+w_2$, of $u$.

Let $w_1$ solve
$$
\left\{ \begin{array}{lll} \Delta w_1=\partial w_1 /\partial t
&\hbox{ in }&\Omega_1 \cap \{t_{i_0} < t< t_{i_0+2}\}\\ \\
w_1=\phi &\hbox{ on }& \Omega_1 \cap \{t=t_{i_0}\} \\ \\
 w_1=0 &\hbox{ on }& \partial \Omega_1 \cap \{t_{i_0} < t< t_{i_0+2}\}.
\end{array} \right.
 $$
Note that $w_1 = \phi$ for $t_{i_0} \leq t < t_{i_0+1}$ and $w_1
\leq \phi$ for $t_{i_0+1} \leq t \leq t_{i_0+2}$. Recall that
$\psi_i$ ($1 \leq i \leq 4$) are the caloric functions constructed
in the proof of Lemma~\ref{lemma5}.
 Let $\tilde{\psi}_4$ solve
$$
\left\{ \begin{array}{lll} \Delta \tilde{\psi}_4=\partial
\tilde{\psi}_4/\partial t &\hbox{ in }&\Omega_1 \cap \{t_{i_0} < t<
t_{i_0+2}\}\\ \\
\tilde{\psi}_4=0 &\hbox{ on }& \Omega_1 \cap \{t=t_{i_0}\} \\ \\
 \tilde{\psi}_4=u &\hbox{ on }& \partial \Omega_1 \cap \{t_{i_0} < t< t_{i_0+2}\}.
\end{array} \right.
 $$
Note that  $\tilde{\psi}_4 = \psi_4$ for $t_{i_0} \leq t <
t_{i_0+1}$ and $\tilde{\psi}_4 \geq \psi_4$ for $t_{i_0+1} \leq t
\leq t_{i_0+2}$.    Let $\Sigma' $ be a space time region in $
\Omega_1 \cap \{t_{i_0} \leq t \leq t_{i_0+2}\}$ such that its time
cross-section
$$\Sigma'_t = B_{r(t)}(0) - B_{(1-\alpha^\e)r(t)}(0)
$$ for
$t_{i_0}\leq t \leq t_{i_0+2}$. Let $w_2$ solve
$$
\left\{ \begin{array}{lll} \Delta w_2=\partial w_2 /\partial t
&\hbox{ in }& \Sigma' \\ \\
w_2=0 &\hbox{ on }&   \{t=t_{i_0}\} \cup \partial \Omega_1
\\ \\
w_2= \tilde{\psi}_4 &\hbox{ on }& \partial \Sigma' -
\{t=t_{i_0}\} -
\partial \Omega_1.
\end{array} \right.
 $$
Note that $w_2$ has a nonzero boundary values $\tilde{\psi}_4(\cdot,
t)$ only on the inner boundary $\partial B_{(1-\alpha^\e)r(t)}(0)
\times \{t\}$ of $\Sigma'$. Now let
$$w=w_1+w_2 \hbox{ in } \Sigma'.$$
Then  $w =\phi \leq u$ on $\{t=t_{i_0}\}$ and $w = w_1 +
\tilde{\psi}_4 \leq u$ on $\partial
B_{(1-\alpha^\e)r(t)}(0)\times\{t\}$ since $w_1 + \tilde{\psi}_4$ is
caloric  in $\Omega_1\cap \{t_{i_0} < t< t_{i_0+2}\}$ with $w_1 +
\tilde{\psi}_4 \leq u$ on $\partial \Omega_1 \cup \{t=t_{i_0}\}$.
Hence by comparison
$$w \leq u \hbox{ in } \Sigma'.$$

Next to construct a supercaloric function $v \geq u$, we modify the
boundary of $w$ on the time interval $[t_{i_0+1}, t_{i_0+2}]$ and
also modify the values of $w$ in the new region so that it is a
supersolution of ($P$) with larger boundary values than $u$. Let
$f(t)$ be the linear function defined on the interval $[t_{i_0+1},
t_{i_0+2}]$ such that
$$
\left\{ \begin{array}{l} f(t_{i_0+1}) =1 - C_1K \alpha\\ \\
f(t_{i_0+2} )=1 - 2C_1 \beta \alpha \end{array} \right.
$$
 where $C_1=C_1(n,M)$ is a sufficiently large constant which will be determined
 later. Here we assume $K>2\beta$ since otherwise the lemma
 would hold with $C_1=2$.
 For a fixed $t \in [t_{i_0+1},
t_{i_0+2}]$, let $g(x,t)$ be the harmonic function defined in $B_{
r(t)/f(t) }(0) -B_{(1-\alpha^\e) r(t)/f(t)}(0)$ such that
$$
\left\{\begin{array} {lll} g(x,t)= 1 &\hbox{ for }& x\in \partial
B_{ r(t)/f(t) }(0) \\ \\
 g(x,t)= 1- C_1K \alpha  &\hbox{ for }& x
\in
\partial B_{(1-\alpha^\e) r(t)/f(t) }(0).
\end{array}\right.
$$
Let $\Pi$ be a space time region constructed on the time interval
$[t_{i_0+1}, t_{i_0+2}]$ such that its time cross-section
$$\Pi_t  = B_{ r(t)/f(t)}(0) - B_{(1-\alpha^\e) r(t)/f(t)   }$$
for $t\in[t_{i_0+1}, t_{t+2}]$. Now construct a function  $v$ in
$\Pi$ as follows
$$v(x,t) = g(x,t)w(f(t)x,t).$$
We will show that $v$ is a supersolution of ($P$) satisfying  $v
\geq u$ on the parabolic boundary of $\Pi$.

\vspace{0.1in}

1. To prove that $v$ is supercaloric in $\Pi$, we find some bounds
on $|f'(t)|$, $|g_t|$, $|\nabla g|$, $w$, $|\nabla w|$ and  $|w_t|$.
\begin{itemize}
\item[(1)] Since $f$ is linear and $t_{i_0+2}-t_{i_0+1} \approx r^2(t_{i_0+1})$ (Lemma~\ref{lemma1})
\begin{equation} \label{f'}
 |f'(t)| \leq \frac{C(n,M)C_1K \alpha}{r^2(t_{i_0+1})}.
 \end{equation}

\item[(2)] Since $g(\cdot, t)$ is harmonic  on the annulus
$B_{ r(t)/f(t) }(0) -B_{(1-\alpha^\e) r(t)/f(t) }(0)$
\begin{equation} \label{nablag}
\frac{c(n)C_1 K \alpha}{\alpha^e r(t)} \leq |\nabla g| \leq
\frac{C(n)C_1 K \alpha}{\alpha^e r(t)}.
\end{equation}

\item[(3)] From the construction of $g$
\begin{eqnarray} \label{gtt}
|g_t| &\leq& \max |\nabla g| ( \frac{d}{dt}\frac{r(t)}{f(t)}+ r'(t))
\nonumber\\ &\leq& \max |\nabla g|C(n,M)(\frac{C_1K \alpha}{r(t_{i_0+1})} + r'(t)) \nonumber \\
&\leq &\max |\nabla g|C(n,M)\frac{C_1K \alpha+1}{r(t_{i_0+1})}
\nonumber \\
&\leq& \frac{C(n,M)C_1 K \alpha(C_1K \alpha+1) }{ \alpha^e
r^2(t_{i_0+1})}
\end{eqnarray}
where the second inequality follows from (\ref{f'}),  the third
inequality follows from the Lipschitz property of $\Omega_1$
(Lemma~\ref{lemma2}) with Lemma~\ref{lemma1}, and the last
inequality follows from (\ref{nablag}) with Lemma~\ref{lemma1}.

\item[(4)]
Since $\max \phi(\cdot, t) \approx \max u(\cdot, t) \approx r(t) $,
\begin{equation} \label{maxwc}
\max w_1(\cdot, t) \approx r(t).
\end{equation}

\item[(5)] Since $w_1$ is a caloric function vanishing on the Lipschitz boundary $\partial \Omega_1
\cap \{ t_{i_0}<t<t_{i_0+2}\}$,  $w_1$ is almost harmonic near
$\partial \Omega_1 \cap \{ t_{i_0+1}\leq t \leq t_{i_0+2}\}$ by
Lemma~\ref{ca}. Hence (\ref{maxwc}) implies that for $(x,t) \in
\Sigma' \cap\{t_{i_0+1} \leq t\leq t_{i_0+2}\}$
\begin{equation} \label{wxtwxt}
c(n,M){\rm dist} (x, \partial B_{r(t)}(0)) \leq w_1(x,t) \leq
C(n,M){\rm dist} (x, \partial B_{r(t)}(0)).
\end{equation}

\item[(6)] Applying Lemma~\ref{caf421}
to the re-scaled $w_1(\sqrt{t_{i_0+2}}x,  t_{i_0+2} t)$, we  obtain
that for $t \in (t_{i_0+1}, t_{i_0+2})$ and $x \in B_{r(t)}(0)
-B_{(1-\alpha^\e)r(t)}(0)$
\begin{equation} \label{wxtwxt2}
c(n,M) \leq |\nabla w_1 (x,t)| \leq C(n,M)
\end{equation}
and
\begin{equation} \label{wtwtwt}
|\partial w_1/\partial t| \leq \frac{C(n,M)r(t_{i_0+1})}{t_{i_0+2}}
\leq \frac{C(n,M)}{r(t_{i_0+1})}
\end{equation}
where (\ref{wxtwxt2}) and the first inequality of (\ref{wtwtwt})
follow from (\ref{wxtwxt}) and the second inequality of
(\ref{wtwtwt}) follows from Lemma~\ref{lemma1}.

\item[(7)] From the construction of $\tilde{\psi}_4$,
$$ \max \tilde{\psi}_4 = \max_{\partial \Omega_1 \cap \{t_{i_0} \leq t
\leq  t_{i_0+2}\} } u \leq C(n,M) K \alpha r(t_{i_0}) $$ where the
last inequality follows from $|\nabla u| \leq C_0M$
(Lemma~\ref{cak}) and the condition (b). Hence by Lemma~\ref{Di262},
$$\max_{\partial B_{(1-\alpha^\e)r(t)}(0) } |\nabla \tilde{\psi}_4(x,t)| \leq
C(n,M)  K \alpha^{1-\e}$$ and
$$ \max_{\partial B_{(1-\alpha^\e)r(t)}(0) }|\frac{\partial \tilde{\psi}_4
}{\partial t}(x,t)| \leq \frac{C(n,M)  K \alpha^{1-2\e}}{r(t)}.
$$
Recall that the caloric function $w_2$ vanishes on the Lipschitz
boundary $\partial \Omega_1$ and on $\{t=t_{i_0}\}$, and it has
nonzero boundary values $\tilde{\psi}_4$ only on the inner boundary
of $\Sigma'$, i.e., on $\partial B_{(1-\alpha^\e)r(t)}(0) \times
\{t\}$. Since the inner boundary of $\Sigma'$ is also Lipschitz in a
parabolic scaling, the above bounds on $|\nabla \tilde{\psi}_4|$ and
$|\partial \tilde{\psi}_4 /\partial t|$  yield that
\begin{equation} \label{wwwww}
|\nabla w_2| \leq C(n,M)  K \alpha^{1-\e}, \,\,\, |\partial w_2
/\partial t| \leq \frac{C(n,M)  K \alpha^{1-2\e}}{r(t)} \hbox{ in }
\Sigma'.
\end{equation}

\item[(8)] Combining  (\ref{wxtwxt}), (\ref{wxtwxt2}),
(\ref{wtwtwt}) and (\ref{wwwww}), we obtain
\begin{equation} \label{wxtwxt'}
c(n,M){\rm dist} (x, \partial B_{r(t)}(0)) \leq w(x,t) \leq
C(n,M){\rm dist} (x, \partial B_{r(t)}(0))
\end{equation}
\begin{equation} \label{wxtwxt2'}
c(n,M) \leq |\nabla w (x,t)| \leq C(n,M)
\end{equation}
and
\begin{equation} \label{wtwtwt'}
|\partial w/\partial t| \leq \frac{C(n,M)}{r(t_{i_0+1})}.
\end{equation}

\end{itemize}

Next we prove that $v$ is supercaloric in $\Pi$.
\begin{eqnarray*}
\Delta v -v_t
&\leq&
2f\nabla g\cdot \nabla w +g f^2 \Delta w -g_t w+ g|\nabla w||f'||x|-gw_t  \\
&\leq& 2f\nabla g\cdot \nabla w -g_t w+ g|\nabla w||f'||x|+2C_1K
\alpha g|w_t|
\\ &\leq&
 -C(n,M)|\nabla g| |\nabla w| +|g_t|w + g|\nabla w||f'||x|+2C_1K \alpha g|w_t|
\\
&\leq& \frac{-C(n,M) C_1 K \alpha}{\alpha^e r(t_{i_0+1})} +
\frac{C'(n,M) C_1 K \alpha}{ r(t_{i_0+1})} \leq 0
\end{eqnarray*}
where  the first and second inequalities follow from the
construction of $v$, the third inequality follows from the
monotonicity of $w_1$, i.e., from Lemma~\ref{cafmonoton} applied for
$w_1$  with the gradient bounds (\ref{wxtwxt2}) and (\ref{wwwww}) of
$w_1$ and $w_2$ (note $g$ is radial and increasing in $|x|$),
 the fourth inequality follows from (\ref{f'}), (\ref{nablag}),
(\ref{gtt}), (\ref{wxtwxt'}), (\ref{wxtwxt2'})  and (\ref{wtwtwt'})
for constants $C(n,M)$ and $C'(n,M)$ depending on $n$ and $M$, and
the last inequality follows if $\alpha< \alpha(n,M)$. 

\vspace{0.1 in}

2. For $x\in \partial B_{ r(t)/f(t) }(0)$ and  $t \in [t_{i_0+1},
t_{i_0+2}]$,
\begin{eqnarray*}
|\nabla v(x,t)| &=&|w(f(t)x,t)\nabla g(x,t) + g(x,t) f(t) \nabla
w(f(t)x,t)| \\  &=&|g(x,t) f(t) \nabla w(f(t)x,t)| \\  &\leq
&(1-2C_1\beta \alpha)|\nabla w| \leq 1
\end{eqnarray*}
where the second equality follows since $w=w_1+w_2=0$ on $\partial
B_{ r(t) }(0)$, the first inequality follows since $f \leq
1-2C_1\beta \alpha$ and $g = 1$ on $\partial B_{ r(t)/f(t) }(0)$,
and the last inequality follows since $w \leq u$ and $\partial
\Omega_1$ and $\partial \Omega(u)$  intersect on each small time
interval. Hence $v$ is a supersolution of ($P$) in $\Pi$.

\vspace{0.1 in}

 3. We show $u \leq v$ on  $\Pi \cap
\{t=t_{i_0+1}\}$. Recall that $w_1 =\phi$ for $t_{i_0} \leq t <
t_{i_0+1}$ and $w_1$ is not necessarily equal to $\phi$  at time
$t=t_{i_0+1}$ since $\phi(\cdot, t_{i_0+1})$ is defined to be the
maximal radial function $\leq u(\cdot, t_{i_0+1})$. However by a
similar argument as in the proof of Lemma~\ref{lemma5}, we can show
that if the assumptions (a), (b) and (c) of Lemma~\ref{lemma5}  hold
for $i=i_0$ and $C=\beta$ then
\begin{equation} \label{wassumption}
u(\cdot, t_{i_0+1}) \leq w_1(\cdot, t_{i_0+1}) + C(n,M)  K \alpha
r(t_{i_0+1})
\end{equation}
on $B_{r(t_{i_0+1})}(0) - B_{(1-\alpha^\e)r(t_{i_0+1})}(0)$ where
$C(n,M)$ is a constant depending on $n$ and $M$. To prove
(\ref{wassumption}), recall that
$$u= \psi_1+ \psi_2 +\psi_3+\psi_4$$
in $\Omega_1 \cap \{t_{i_0} \leq t \leq t_{i_0+1}\}$, where $\psi_i$
are the caloric functions constructed in the proof of
Lemma~\ref{lemma5} with $i=i_0$. From the construction of $\psi_1$,
we can observe $w_1(\cdot,t_{i_0+1})=\psi_1(\cdot,t_{i_0+1})$. Also
on $B_{r(t_{i_0+1})}(0) \times \{t=t_{i_0+1}\}$
\begin{eqnarray}
\psi_2 + \psi_3 + \psi_4 &\leq& \beta \alpha^{1-\e}\psi_1 + \psi_3 +
\psi_4 \nonumber \\
&\leq& 2\beta \alpha^{1-\e}\psi_1  + \psi_4 \nonumber \\
&\leq& 2\beta \alpha^{1-\e}\psi_1  + C(n,M)K \alpha r(t_{i_0+1})
\label{betabeta}
\end{eqnarray}
where the first inequality follows from the construction of $\psi_1$
and $\psi_2$ and the condition (a) with
 $i=i_0$ and $C=\beta$, the second inequality follows from (\ref{newnew}) with $C=\beta$
 and with
 $\psi_1$ in place of $\phi$, and the
last inequality follows from (\ref{newnewnew}) and
Lemma~\ref{lemma1}. Hence  on $B_{r(t_{i_0+1})}(0) -
B_{(1-\alpha^\e)r(t_{i_0+1})}(0) \times \{t=t_{i_0+1}\}$
\begin{eqnarray*}
u &=& w_1 + \psi_2 + \psi_3 +\psi_4
\\&\leq& (1+2 \beta \alpha^{1-\e})w_1  + C(n,M)K \alpha r(t_{i_0+1})
\\&\leq& w_1 + C(n,M) (\beta \alpha r(t_{i_0+1})  +
K \alpha r(t_{i_0+1})) \\
&\leq& w_1 + C(n,M)  K \alpha r(t_{i_0+1})
\end{eqnarray*}
where the equality follows from $\psi_1 =w_1$, the first inequality
follows from (\ref{betabeta}), the second inequality follows from
(\ref{wxtwxt2}), and the last inequality follows since $K>2\beta$.
Hence we obtain (\ref{wassumption}).

Now on  $\Pi \cap \{t=t_{i_0+1}\}$ ($=B_{r(t_{i_0+1})}(0) -
B_{(1-\alpha^\e)r(t_{i_0+1})}(0) \times \{t=t_{i_0+1}\}$)
\begin{eqnarray*}
v(x,t_{i_0+1}) &\geq &  (1-C_1K \alpha) w_1 ((1-C_1 K \alpha)x,
t_{i_0+1})\\
&\geq& (1-C_1K \alpha) (w_1 (x, t_{i_0+1}) + c(n,M)C_1 K \alpha
r(t_{i_0+1})) \\&\geq& w_1 (x, t_{i_0+1}) -C(n,M)C_1K
\alpha^{1+\e}r(t_{i_0+1}) + C(n,M)C_1 K \alpha r(t_{i_0+1})
\\&\geq& w_1 (x, t_{i_0+1}) + C(n,M)C_1 K \alpha r(t_{i_0+1})
 \\ &\geq & u(x, t_{i_0+1})
\end{eqnarray*}
where the first inequality follows from the construction of $v$, the
second inequality follows from (\ref{wxtwxt2}) with the monotonicity
of $w_1$ (Lemma~\ref{cafmonoton}), the third inequality follows from
(\ref{wxtwxt2}), the forth inequality follows if $\alpha<
\alpha(n,M)$, and the last
inequality follows form (\ref{wassumption}) if we choose a large constant $C_1$
 depending on $n$ and $M$.

\vspace{0.1in}

4. We show $v\geq u$ on the inner lateral boundary of $\Pi$, i.e.,
on the set $\partial B_{(1-\alpha^\e)r(t)}(0) \times \{t\}$ for $t
\in [t_{i_0+1}, t_{i_0+2}]$. By the construction of $w$,
\begin{equation} \label{tto}
w =w_1+\tilde{\psi}_4 \hbox{ on }\partial
B_{(1-\alpha^\e)r(t)}(0)\times \{t\}.
\end{equation}
 Here recall that
$w_1+\tilde{\psi}_4$ is a caloric function in $\Omega_1 \cap
\{t_{i_0} \leq t \leq t_{i_0+2}\}$ with boundary values $u$ on
$\partial \Omega_1$, and $\phi$ on $\{t=t_{i_0}\}$. Then by a
similar argument as in (\ref{betabeta}),
\begin{equation} \label{ttoo}
u - ( w_1+\tilde{\psi}_4   ) \leq 2\beta \alpha^{1-\e}w_1
\end{equation}
in $\Omega_1 \cap \{t_{i_0+1} \leq t\leq t_{i_0+2}\}$. Combining
(\ref{tto}) and (\ref{ttoo}) we obtain that on $\partial
B_{(1-\alpha^\e)r(t)}(0) \times \{t\}$ and for $t \in [t_{i_0+1},
 t_{i_0+2}]$
\begin{equation} \label{betabeta'}
u \leq w + 2\beta \alpha^{1-\e}w_1 \leq (1+2\beta \alpha^{1-\e})w.
\end{equation}
 Now for $x\in
\partial B_{(1-\alpha^\e)r(t)}(0)$ and $t \in [t_{i_0+1},
t_{i_0+2}]$,
\begin{eqnarray*}
v(x,t) &\geq&  (1-C_1K \alpha) w ((1-2C_1 \beta \alpha)x, t)\\
&\geq& (1-C_1K \alpha) (w (x, t) + C(n,M)C_1 \beta \alpha r(t))
\\&\geq& w (x, t) + C(n,M)C_1 \beta \alpha r(t)-C(n,M)C_1K
\alpha^{1+\e}r(t)
\\&\geq&  w(x, t) + C(n,M)C_1 \beta \alpha r(t)-C(n,M)C_1
\alpha^{\frac{\e-1}{2}} \beta \alpha^{1+\e}r(t)
\\&\geq& w(x, t) + C(n,M)C_1 \beta \alpha r(t)
\\&\geq&(1 + 2\beta \alpha^{1-\e})w(x, t)
 \\ &\geq & u(x, t)
\end{eqnarray*}
where the first inequality follows since $w$ is decreasing in $|x|$,
the second and third inequalities follow from (\ref{wxtwxt2'}), the
fourth inequality follows from  the assumption (\ref{K}) with
$C=\beta$, that is $K < \alpha^{(\e-1)/2}\beta$, the fifth
inequality follows since $\e =2/3$, the sixth inequality follows
from (\ref{wxtwxt2'}) if $C_1=C_1(n,M)$ is chosen 
large, and the last inequality follows from (\ref{betabeta'}).

\vspace{0.1in}

5. Conclude from 1, 2, 3 and 4 that  $v$ is a supersolution of ($P$)
in $\Pi$ such that  $v \geq u$ on  $\partial \Pi \cap
\{t=t_{i_0+1}\}$ and on the inner lateral boundary of $\Pi$. By
comparison, $v \geq u$ in $\Pi$. Recall that $w \leq u$ in
$\Sigma'$. Hence the free boundary $\Gamma_t(u)$ of $u$ is trapped
between $\Gamma_t(v)$ and $\Gamma_t(w)$ for $t_{i_0+1} \leq t \leq
t_{i_0+2}$. Now let $d(t)$ be the distance between $\Gamma_t(v)$ and
$\Gamma_t(w)$. Then by
 the construction of $v$,
$$d(t)=r(t)(\displaystyle{\frac{1}{f(t)}}-1).$$
 Since $0<f(t)<1$ increases in time on the time interval $[t_{i_0+1}, t_{i_0+2} ]$,
  the function   $1/f(t)-1$ decreases in time on $[t_{i_0+1}, t_{i_0+2} ]$.
  Hence we can obtain an improved
   estimate on the location of the free boundary at the later time
   $t=t_{i_0+2}$. Since $f(t_{i_0+2})= 1 -2C_1\beta \alpha$,
$$d(t_{i_0+2}) \leq 3C_1 \beta \alpha r(t_{i_0+2}).$$
We conclude that the condition (b) holds with $K$ replaced by $3C_1
\beta$ for a constant $C_1$ depending on $n$ and $M$.

\vspace{0.1in}

6. Lastly, if (b) holds for $K=3C_1 \beta$ and $i \geq i_0+2$, i.e.,
$$\Gamma_t(u)  \subset B_{(1+3C_1 \beta \alpha)r(t)}(0)
-B_{r(t)}(0)$$ for $t \geq t_{i_0+2}$, then since $|\nabla u| \leq
C_0M$,
$$ u -\phi = u \leq C(n,M) \beta \alpha r(t)$$ on $\partial \Omega_1
\cap \{t \geq t_{i_0+2}\}$.
\end{proof}

\noindent {\bf Remark 7.} Note that  in the proof of Lemma~\ref{lemma6}, we
use the condition $K \geq 1$ only for (\ref{betabeta}), i.e., for
(\ref{newnewnew}).

\begin{corollary} \label{cor2}
For $i \geq 2$, the conditions (a), (b) and (c) of
Lemma~\ref{lemma5} hold with $C$ replaced by $h^i C$ for constants
$0<h<1$ and  $C>0$ depending on $n$ and $M$. In other words,
\begin{equation} \label{aa}
u(\cdot,t_i) \leq (1+ Ch^i \alpha^{1-\e}) \phi(\cdot, t_i)  \hbox{
on } B_{(1-\alpha^\e)r(t_i)}(0),
\end{equation}
  $\Gamma_t(u)$ is contained in the
\begin{equation} \label{aaaa}
 C h^i\alpha r(t) \hbox{-
 neighborhood of }\partial B_{r(t)}(0)
 \end{equation}
  for $t \in [t_i, t_{i+1}]$,
 and
\begin{equation} \label{aaa}
u(\cdot, t_{i}) \leq \phi(\cdot, t_{i}) + C h^i
\alpha^{\frac{\e+1}{2} } r(t_i)
\end{equation}
on $B_{r(t_{i})}(0) - B_{(1-\alpha^\e)r(t_{i})}(0)$.
\end{corollary}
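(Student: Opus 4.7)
\noindent\emph{Proof proposal.}
The plan is to iterate Lemmas~\ref{lemma5} and~\ref{lemma6} alternately, upgrading the one-shot improvement of Corollary~\ref{cor1} into genuine geometric decay in the dyadic index $i$. The base case at $i=2$ is supplied by Lemma~\ref{lemma3} (condition (b) with some $K=K(n,M)$), Lemma~\ref{lemma4} (condition (a) with $C=C(n,M)$; note that the auxiliary hypothesis (\ref{phitilde}) also follows from Lemma~\ref{lemma4} via $\phi\leq\tilde\phi$), and the estimate (\ref{refrefr}) (condition (c), since both $C$ and $K$ at this stage are comparable to $C(n,M)$).

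Exactly as in the proof of Corollary~\ref{cor1}, I first run Lemma~\ref{lemma5} repeatedly, each application replacing the current $C$ in (a) and (c) by $hC$ while leaving the flatness constant $K$ untouched. This is permitted as long as the compatibility $K<\alpha^{(\e-1)/2}C$ holds; after roughly $m\approx\log(\alpha^{-1/6})/\log(1/h)$ dyadic steps, $C$ has dropped to the critical threshold $\sim\alpha^{1/6}K$. At that moment I invoke Lemma~\ref{lemma6} with $\beta$ equal to this current $C$; two further dyadic steps later, $K$ has been replaced by $C_1\beta$, restoring the ratio $K/C$ to a constant depending only on $n,M$. During the two-step gap condition (a) is carried forward by running Lemma~\ref{lemma5} in parallel with Lemma~\ref{lemma6}, and (\ref{phitilde}) is maintained at every step by the fine time-decomposition argument at the end of the proof of Corollary~\ref{cor1}.

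A single super-iteration therefore consumes $m+2$ dyadic steps and multiplies both $C$ and $K$ by roughly $C_1\alpha^{1/6}$. After $j$ super-iterations the constants are bounded by $\sim(C_1\alpha^{1/6})^j$ at dyadic index $i\asymp 2+j(m+2)$. A short computation shows that $\log\bigl[(C_1\alpha^{1/6})^{1/(m+2)}\bigr]\to\log h$ as $\alpha\to 0$, so by further shrinking $\alpha(n,M)$ the per-step rate can be made arbitrarily close to (hence $\leq$) $h$; this yields the stated bound $Ch^i$ for $i\geq 2$ with $h=h(n,M)\in(0,1)$ independent of $\alpha$.

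The main obstacle is that both Lemma~\ref{lemma5} and Lemma~\ref{lemma6} are stated under the standing assumption $K\geq 1$, which is used to produce the auxiliary bounds (\ref{newnewnew}) and (\ref{new}) on $\max_{\partial\Omega_1}u$; in the iterated scheme $K$ inevitably drops below $1$. I handle this as flagged in Remarks~6 and~7: once $K$ has improved to some $K_0<1$, I replace $\Omega_1$ by a tighter radial subregion whose lateral boundary tracks $\Gamma_t(u)$ to within a distance $\sim K_0\alpha r(t)$, so that the direct bound $\max_{\partial\Omega_1}u\leq C(n,M)K_0\alpha r(t)$ remains available. With this modification in place, (\ref{newnewnew}) and (\ref{new}) stay valid after each improvement of $K$, and the alternating iteration closes for arbitrarily small $K$.
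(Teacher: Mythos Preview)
Your proposal is correct and follows the same overall strategy as the paper: alternate Lemma~\ref{lemma5} and Lemma~\ref{lemma6}, and once $K$ drops below $1$, tighten the approximating region $\Omega_1$ so that the auxiliary bounds (\ref{newnewnew}) and (\ref{new}) survive. The base case, the use of Corollary~\ref{cor1} to reach the threshold, and the invocation of Remarks~6 and~7 are all exactly what the paper does.

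Two differences in emphasis are worth noting. First, the paper is more explicit than you are about \emph{how} to tighten $\Omega_1$: it literally re-runs the velocity argument of Lemma~\ref{lemma2} on subintervals of length $\beta\alpha\,r^{in}(t_i)^2$ (rather than $\alpha\,r^{in}(t_i)^2$), using the improved flatness (\ref{123}) and the improved annular estimate (c) to upgrade (\ref{12}) to (\ref{1234}); this is what actually pins $\partial\Omega_1$ to within $C\beta\alpha\,r^{in}(t_i)$ of $\partial\Omega^{in}$ and hence yields $\max_{\partial\Omega_1}u\le C\beta\alpha\,r(t)$. Your sentence ``replace $\Omega_1$ by a tighter radial subregion whose lateral boundary tracks $\Gamma_t(u)$'' is the right idea but elides this step; you should say that you are re-running Lemma~\ref{lemma2} with $\alpha$ replaced by $\beta\alpha$. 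Second, you give a careful per-step rate computation via super-iterations, whereas the paper simply says ``iterate'' and leaves the rate implicit. Your computation is fine; note that since the corollary only asserts \emph{some} $h=h(n,M)\in(0,1)$, it is enough that your per-step factor $(C_1\alpha^{1/6})^{1/(m+2)}=h^{m/(m+2)+o(1)}$ is bounded away from $1$ once $\alpha\le\alpha(n,M)$, and you need not match the $h$ of Lemma~\ref{lemma5} exactly.
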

\begin{proof}
As in the proof of Corollary~\ref{cor1}, the conditions (a), (b) and
(c) are satisfies with constant $C=K=C(n,M)\geq 1$ for $i \geq 2$.
Let $m$ be the integer as in Corollary~\ref{cor1} and let $\e=2/3$.
Then for  $t \geq t_{m+4}$, the constants $C$ and $K$ can be
replaced, respectively, by $\beta=\alpha^{\frac{1-\e}{2}}C$ and
$C_1\beta$ (see Corollary~\ref{cor1} and Lemma~\ref{lemma6}). Here
$C_1$ is a constant depending on $n$ and $M$. Then by the condition
(b) with the improved constants, for $ t \geq t_{m+4}$,
\begin{equation} \label{123}
 \Gamma_t(u) \subset
B_{(1+ C_1 \beta \alpha)r(t)}(0) -B_{r(t)}(0).
\end{equation}
for a constant $C_1>0$ depending on $n$ and $M$. Fix $i \geq m+4$.
Decompose $[t_i, t_{i+1}]$ into subintervals of length $\beta \alpha
r^{in}(t_i)^2$ and let $\tau$, $\tilde{\tau}$ and $\Sigma$ be given
similarly as in Lemma~\ref{lemma2}, so that $\Sigma =
B_{r^{in}(\tau)}(0) \times [\tilde{\tau}, \tau]$,
 $\tau -\tilde{\tau} = \beta\alpha r^{in}(t_i)^2$ and $V_{[\tilde{\tau}, \tau]} \leq C(n,M)/r^{in}(t_i)$.
Recall that $V_{[\tilde{\tau}, \tau]}$ is the average velocity of
$\partial \Omega^{in}$ on $[\tilde{\tau}, \tau]$. Then using the
upper bound on $V_{[\tilde{\tau}, \tau]}$,
\begin{equation} \label{12345}
r^{in}(t) -r^{in}(\tau) \leq   r^{in}(\tilde{\tau}) -r^{in}(\tau)
\leq C(n,M)\beta \alpha
 r^{in}(t_i).
\end{equation}
for all $t \in [\tilde{\tau}, \tau]$.  By (\ref{123}) and
(\ref{12345}) with $|\nabla u| \leq C_0M$,
$$
\max_{\partial B_{r^{in}(\tau)}(0)} u(\cdot, t) \leq C(n,M) \beta
\alpha r^{in}(t_i)
$$
for all $t \in [\tilde{\tau}, \tau]$. In other words,
\begin{equation} \label{1235}
u \leq C(n,M) \beta \alpha r^{in}(t_i) \hbox{ on } \partial \Sigma
\end{equation}
Let $\psi$ be a caloric function in $\Sigma$ constructed as in
Lemma~\ref{lemma2}, then by (\ref{1235}) and the improved condition
(c),
\begin{equation}\label{1234}
\psi(\cdot, \tau) \geq u(\cdot, \tau ) -C(n,M) \beta \alpha
r^{in}(t_i) \geq (1-C(n,M)\sqrt{ \beta \alpha})u(\cdot, \tau)
\end{equation}
 on $\partial B_{(1 - c \sqrt{\beta \alpha}) r^{in}(t_i)}(0)$.
Using (\ref{1234}) instead of (\ref{12}),  the construction of
$\Omega_1$  can be improved so that $\partial \Omega_1 \cap \{t_i
\leq t \leq t_{i+1}\}$ is located in the $C(n,M)\beta \alpha
r^{in}(t_i)$-neighborhood of $\partial \Omega^{in}$, for $i \geq
m+4$. Then using the bound $|\nabla u| \leq C_0M$ again,
$$\max_{\partial B_{r(t)}(0)}u(\cdot, t) \leq C(n,M) \beta \alpha
r(t).$$  Note that the above  inequality gives (\ref{newnewnew}),
(\ref{new}) and (\ref{betabeta}) for $K=C_1\beta<1$ and $t\geq
t_{m+4}$. Then as mentioned in Remark 6 and Remark 7,  we
 iterate Lemma~\ref{lemma5} and Lemma~\ref{lemma6} for $K <1$,
  improving  the  approximating region $\Omega_1$ at later times.
\end{proof}

\section{Asymptotic behavior of $u$; Regularity of $\Gamma(u)$ }
\label{sec8}

 (\ref{aaaa})
of  Corollary~\ref{cor2} says that the free boundary of $u$ is
asymptotically spherical. Using this result, we approximate $u$ by
radially symmetric functions $w_k$ supported on $\Omega_1 \cap \{t_k
\leq t <T\}$  such that  $w_k$ is caloric   and its gradient is
close to $1$ on $\partial \Omega_1$. Then $u$ turns out to be
asymptotically self-similar  by Lemma~\ref{ghv}, and
 we also obtain the  regularity of $\Gamma(u)$ by Lemma~\ref{AW}.

\begin{proposition} \label{lastlemma} [(i)and (ii) of Theorem~\ref{thm1}]
 Let $0<h=h(n,M)<1$  be  as in
Lemma~\ref{lemma5} and let $\e=2/3$. Then for $k \geq 2$, there
exists a radially symmetric caloric function $w_k$ defined in
$\Omega_1 \cap \{t_k \leq t <T\}$ such that

\begin{itemize}
\item[(i)]  For $t \geq t_k$, $\Gamma_t(u)$ is located in the $C h^k\alpha r(t)$-neighborhood of
$\Gamma_t(w_k)$
\item[(ii)] For $t \geq t_k$
\begin{equation} \label{firstin}
w_k(\cdot,t) \leq u(\cdot,t) \leq w_k(\cdot,t) +C h^k\alpha^{1-\e} \max u(\cdot, t)
\end{equation}
 where we let $w_k = 0$ outside
$\Omega(w_k)$
\item[(iii)] On $\partial \Omega(w_k)$
\begin{equation} \label{secondin}
1-C h^{Ak} \alpha^{A\e}  \leq |\nabla w_k| \leq 1
\end{equation}
where $A=A(n,M)>0$ is a constant depending on $n$ and $M$.
\end{itemize}
In (i), (ii) and (iii) $C$ denotes a constant depending on $n$ and
$M$. By (i), the free boundary $\Gamma_t(u)$ is asymptotically
spherical and by Lemma~\ref{ghv} with (i), (ii) and (iii),  $u$ is
asymptotically self-similar.
\end{proposition}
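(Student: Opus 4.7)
The plan is to take $w_k$ to be the radial caloric function in $\Omega_1 \cap \{t_k \le t < T\}$ defined by solving the heat equation with zero Dirichlet data on $\partial \Omega_1 \cap \{t_k < t < T\}$ and initial data $w_k(\cdot, t_k) = \phi(\cdot, t_k)$, the maximal radial function bounded by $u(\cdot, t_k)$. Rotational symmetry of the data and of $\Omega_1$ forces $w_k$ to be radial, and positivity of $\phi(\cdot, t_k)$ on $B_{r(t_k)}(0) = \Omega_{1,t_k}$ implies $w_k > 0$ in the interior, so $\Omega(w_k) = \Omega_1 \cap \{t_k \le t < T\}$ and $\Gamma_t(w_k) = \partial B_{r(t)}(0)$. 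With this identification, part (i) is immediate from Corollary~\ref{cor2}~(\ref{aaaa}), since $h^i \le h^k$ for $i \ge k$.

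For (ii), the bound $w_k \le u$ follows by comparison in $\Omega_1 \cap \{t_k \le t < T\}$ (both are caloric there, since $\Omega_1 \subset \Omega(u)$, and $w_k \le u$ on the parabolic boundary). The upper bound is obtained by applying the maximum principle to the caloric function $u - w_k$: at $t = t_k$, Corollary~\ref{cor2}~(\ref{aa}) gives the desired estimate on $B_{(1-\alpha^\epsilon)r(t_k)}(0)$ and (\ref{aaa}) handles the thin annulus, with $\alpha^{(\epsilon+1)/2} \le \alpha^{1-\epsilon}$ since $\epsilon = 2/3$; on the lateral boundary $\partial\Omega_1$, the refinement of $\Omega_1$ described in Remark~6 together with $|\nabla u| \le C_0 M$ gives $u \le C h^k \alpha r(t) \le C h^k \alpha^{1-\epsilon}\max u(\cdot,t)$. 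In the thin annulus $\Omega(u) \setminus \Omega_1$ where $w_k$ is set to $0$, the same flatness estimate (i) and the gradient bound on $u$ yield the required inequality.

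The bulk of the work lies in (iii), which I plan to prove by dilation barriers in the spirit of Lemma~\ref{lemma6}. For the upper bound $|\nabla w_k| \le 1$, consider the outward dilation $\hat w_k(x,t) = (1+\eta) w_k(x/(1+\eta),t)$ with $\eta = C h^k\alpha$ chosen so that $\Omega_t(\hat w_k) = B_{(1+\eta)r(t)}(0) \supset \Omega_t(u)$ by Lemma~\ref{lemma3}; the computations in steps~1--4 of the proof of Lemma~\ref{lemma6}, together with the bound (ii) and $|\nabla u|=1$ on $\Gamma(u)$, should give that $\hat w_k$ is a classical supersolution of ($P$) dominating $u$ on the parabolic boundary, and the scaling identity $|\nabla \hat w_k| = |\nabla w_k|$ on the free boundary of $\hat w_k$ then forces $|\nabla w_k| \le 1$ on $\partial\Omega_1$. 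The lower bound comes from the analogous inward dilation $\check w_k(x,t) = (1-\eta')w_k(x/(1-\eta'),t)$ with $\eta' = C h^{Ak}\alpha^{A\epsilon}$: after verifying $\check w_k$ is a subsolution of ($P$) dominated by $u$, one controls the radial Taylor expansion of $w_k$ near $\partial B_{r(t)}(0)$ via Lemma~\ref{ca}, Lemma~\ref{cafmonoton}, and Lemma~\ref{caf421} to recover $|\nabla w_k| \ge 1 - Ch^{Ak}\alpha^{A\epsilon}$. The exponent $A$ arises from balancing the $h^k\alpha^{1-\epsilon}$ interior error in (ii) against the $h^k\alpha$ flatness error in (i) and the second-order radial geometry of $w_k$ near a smooth spherical boundary.

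The hardest step is the upper bound in (iii), since $w_k$ is only a caloric function and the Stefan condition $|\nabla w_k|=1$ is \emph{not} imposed directly on $\partial\Omega_1$; the supersolution verification for $\hat w_k$ must track simultaneously the $\alpha$-periodic angular oscillation of $u$ near $\partial\Omega_1$ and the $Ch^k\alpha$ deviation of $\Gamma_t(u)$ from $\partial B_{r(t)}(0)$, and it is here that the refined bounds of Corollary~\ref{cor2} (rather than the cruder Lemma~\ref{lemma3}) are indispensable. Once (i)--(iii) are in hand, asymptotic self-similarity of $u$ follows from Lemma~\ref{ghv} applied to the genuine radial solution of ($P$) matching $w_k$ to the same order of error, and Lemma~\ref{AW} supplies the $C^{1+\gamma,\gamma}$ regularity of $\Gamma(u)$ stated in Theorem~\ref{thm1}(iii), provided $k_0$ is chosen so that $h^{Ak_0}\alpha^{A\epsilon} \le \sigma_1 (Ch^{k_0}\alpha)^2$.
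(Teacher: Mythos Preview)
Your definition of $w_k$ and the argument for (i) are essentially the same as the paper's (the paper starts at $t_{k-1}$ to gain one dyadic layer of parabolic regularity, but this is cosmetic). However, there are two genuine gaps.

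\textbf{Gap in (ii).} Your maximum-principle argument for the upper bound does not give a bound that decays as $t\to T$. The function $u-w_k$ is caloric in $\Omega_1\cap\{t_k<t<T\}$, and the maximum principle bounds $(u-w_k)(\cdot,t)$ by the supremum over the \emph{entire} parabolic boundary up to time $t$. The lateral data on $\partial\Omega_1\cap\{s\in[t_k,t]\}$ satisfy $u(\cdot,s)\le Ch^k\alpha\, r(s)$, and the supremum of this over $s\in[t_k,t]$ is attained near $s=t_k$, giving only $(u-w_k)(\cdot,t)\le Ch^k\alpha\, r(t_k)$ --- a fixed constant, not $Ch^k\alpha^{1-\e}\max u(\cdot,t)\approx Ch^k\alpha^{1-\e}r(t)\to 0$. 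The paper closes this gap by writing $u=\tilde w_k+\sum_{i\ge k-1}v_i$, where each $v_i$ carries the lateral data only on $\partial\Omega_1\cap\{t_i<t<t_{i+1}\}$. One then shows $v_i\le Ch^i\alpha\, u$ for $t\ge t_{i+2}$ via a boundary-Harnack comparison with $u$ in the Lipschitz region $\Omega_1$; summing the geometric series in $i$ yields the decaying bound. A single maximum-principle step cannot replace this dyadic localization.

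\textbf{Gap in (iii), upper bound.} Your dilation barrier $\hat w_k(x,t)=(1+\eta)w_k(x/(1+\eta),t)$ is circular: on its free boundary $\partial B_{(1+\eta)r(t)}(0)$ one has $|\nabla\hat w_k|=|\nabla w_k|_{\partial B_{r(t)}(0)}$, so verifying that $\hat w_k$ is a supersolution of $(P)$ already requires the bound $|\nabla w_k|\le 1$ you are trying to prove. (Moreover, a pure spatial dilation of a caloric function is not caloric, so the supercaloric check is also nontrivial.) The paper avoids all of this with a one-line touching argument: by the construction of $\Omega_1$ in Lemma~\ref{lemma2}, $\partial\Omega_1$ meets $\partial\Omega^{in}$ (hence $\Gamma(u)$) on an $\alpha r(t_i)^2$-dense set of times; at such a touching time, $w_k\le u$ with a common zero forces $|\nabla w_k|\le|\nabla u|=1$ at the contact point, and radial symmetry of $w_k$ propagates this to the entire sphere $\partial B_{r(t)}(0)$. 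Continuity in $t$ then yields $|\nabla w_k|\le 1$ on all of $\partial\Omega_1$.

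For the lower bound in (iii) the paper also proceeds by touching, but from outside: it builds an auxiliary Lipschitz outer radius $R(t)$ with $r^{out}(t)\le R(t)\le r(t)+Ch^k\alpha r(t)$, touches $\Gamma_t(u)$ on a dense set of times, and compares with harmonic annular test functions $H_{(t)}$ and $\tilde H$ via Lemma~\ref{ca}. Your dilation-barrier idea is in the same spirit, but the paper's argument is what actually produces the exponent $A$, and your sketch does not indicate how the inward dilation would yield a \emph{subsolution} without already controlling $|\nabla w_k|$.
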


\noindent {\bf Remark 8.}
(ii) of Theorem~\ref{thm1} follows from (\ref{firstin}), (\ref{self2}) and Lemma~\ref{lemma1}.

\begin{proof}
 Recall that $u$ is well approximated by a radial function
$\phi$, which is caloric on each time interval $(t_i, t_{i+1})$.
However $\phi$ does not solve a heat equation on $(t_k,T)$ since it
is discontinuous at each $t_i$ ($\phi(\cdot, t_i)$ is defined to be
the maximal radial function $\leq u(\cdot,t_i)$). Hence we construct
another radial function $w_k \leq \phi$ which is caloric on $(t_k,
T)$. Using Corollary~\ref{cor2}, we  show that the values of $w_k$
are close to the values of $u$ and the gradient of $w_k$ is
sufficiently close to $1$ on its vanishing boundary.

For $k \geq 2$, let $w_k$ solve
$$
\left\{\begin{array} {lll} \Delta w_k= \partial w_k/\partial t
&\hbox{ in }& \Omega_1 \cap \{t > t_{k-1}\}
\\ \\
w_k= \phi &\hbox{ on }& \{t=t_{k-1}\}
\\ \\
 w_k= 0 &\hbox{ on  }& \partial \Omega_1 \cap \{t >t_{k-1}\}
\end{array}\right.
$$
and let $\tilde{w}_k$ solve
$$
\left\{\begin{array} {lll} \Delta \tilde{w}_k= \partial
\tilde{w}_k/\partial t &\hbox{ in }& \Omega_1 \cap \{t > t_{k-1}\}
\\ \\
\tilde{w}_k= u &\hbox{ on }& \{t=t_{k-1}\}
\\ \\
  \tilde{w}_k= 0 &\hbox{ on  }& \partial \Omega_1 \cap \{t >t_{k-1}\}.
\end{array}\right.
$$
Then for  $ t\geq t_k$,
\begin{equation} \label{22}
 w_k \leq \tilde{w}_k \leq
(1+Ch^k\alpha^{1-\e}) w_k
\end{equation}
where $C=C(n,M)>0$ and the second inequality follows from (\ref{aa})
and (\ref{aaa}) with $\e=2/3$.  For $i\geq k-1$, let $v_i$ be a
caloric function defined in $\Omega_1 \cap \{t > t_{k-1}\}$ with the
following boundary condition
$$
\left\{\begin{array} {lll} v_i= 0 &\hbox{ on }& \{t=t_{k-1}\}
\\  \\
v_i= u &\hbox{ on  }& \partial \Omega_1 \cap \{t_i < t < t_{i+1}\}
\\ \\
 v_i= 0 &\hbox{ on  }& \partial \Omega_1 \cap \{t_{k-1}<t<t_i \hbox{ or } t>t_{i+1}\}
.
\end{array}\right.
$$
Then  in $\Omega_1 \cap \{t > t_{k-1}\}$
\begin{equation} \label{11}
u = \tilde{w}_k+\sum_{i= k-1}^{\infty} v_i.
\end{equation}
Throughout the proof, let $C$ denote a positive constant depending
on $n$ and $M$. Then by (\ref{aaaa}) with $|\nabla u| \leq C_0M$
 (Lemma~\ref{cak}),
\begin{equation} \label{33}
v_i=u \leq C  h^i \alpha  r(t_i)
\end{equation}
on $\partial \Omega_1 \cap \{ t_i < t < t_{i+1}\}$.
 Hence in  $ \Omega_1 \cap \{t \geq t_{i+2}  \}$,
\begin{equation} \label{44}
v_i \leq C   h^i \alpha u.
\end{equation}
Combining (\ref{22}), (\ref{11}),  (\ref{33}) and   (\ref{44}), we
obtain that in $\Omega_1 \cap \{t \geq t_{k} \}$
\begin{eqnarray}
 u &\leq& (1+Ch^k \alpha^{1-\e})w_k + \sum_{i=k-1}^{\infty}C
h^i \alpha u +
C  h^k \alpha \max u(\cdot,t)  \nonumber\\
& \leq& (1+Ch^k \alpha^{1-\e})w_k + C  h^k \alpha \max  u(\cdot, t)
\label{cha}  \\
& \leq& w_k + C  h^k \alpha^{1-\e} \max u(\cdot, t). \nonumber
\end{eqnarray}
Also for $t \in (t_i, t_{i+1})$, $i \geq k$, and $x \in \Omega_t(u)
- \Omega_t(w_k)$,
$$u(x,t) \leq C  h^i \alpha  r(t_i)
\leq C  h^k \alpha  r(t_i)
 \leq C  h^k \alpha^{1-\e} \max u(\cdot, t)$$ where the first
inequality follows from a similar argument as in (\ref{33}). Hence
we obtain the second part (ii) of the lemma. Observe that the first
part (i) follows from Corollary~\ref{cor2} since $\Gamma_t(w_k) =
\partial B_{r(t)}(0)$ for $t \geq t_k$.

Next we prove (iii) that $|\nabla w_k|$ is sufficiently close to $1$
on $\partial \Omega_1 \cap \{t \geq t_{k}\}$. Since $w_k \leq u$ and
the free boundary $\Gamma_t(w_k)$, that is $\partial B_{r(t)}(0)$,
intersects $\Gamma_t(u)$ at each $t$, we obtain the upper bound
$$|\nabla w_k| \leq 1 \hbox{ on } \partial \Omega_1 \cap \{t \geq t_{k}\}.$$

To obtain the lower bound of $|\nabla w_k|$, i.e., for the first
inequality of (\ref{secondin}),   we compare $w_k$ with some
harmonic functions near the vanishing boundary $\partial
B_{r(t)}(0)$. Fix a dyadic interval $(t_i, t_{i+1}] \subset (t_{k},
T)$. For $t \in (t_i, t_{i+1}] $, let $H_{(t)}(\cdot)$ be the
harmonic function defined in $B_{r(t)}(0) -
B_{(1-h^{k/2}\alpha^{\e})r(t)}(0)$ with the following boundary data
$$
\left\{\begin{array} {lll} H_{(t)}= 0 &\hbox{ on }&\partial
B_{r(t)}(0)
\\ \\
 H_{(t)}(\cdot)= w_k(\cdot, t) &\hbox{ on  }& \partial
B_{(1-h^{k/2}\alpha^{\e})r(t)}(0).
\end{array}\right.
$$
Then by Lemma~\ref{ca} applied to $w_k$,
 \begin{equation} \label{H12}
 H_{(t)}(\cdot) \leq
(1+h^{ak/2}\alpha^{a\e} )w_k(\cdot,t)
\end{equation}
on $B_{r(t)}(0) - B_{(1-h^{k/2}\alpha^{\e})r(t)}(0)$ where $a>0$ is
a constant depending on $n$ and $M$.
 This implies
\begin{equation} \label{hw1}
|\nabla H_{(t)}| \leq (1+h^{ak/2}\alpha^{a\e} )|\nabla w_k|
\end{equation}
 on $\partial B_{r(t)}(0)$. Let $$A=A(n,M) = \min \{a/2, 1/2\}>0.$$ Then by (\ref{hw1})
 it suffices to prove
 \begin{equation} \label{H_t}
|\nabla H_{(t)}| \geq 1 - Ch^{Ak}\alpha^{A\e}
\end{equation}
on $\partial B_{r(t)}(0)$ for $t \in (t_i, t_{i+1}] \subset (t_k,
T)$.

First we show (\ref{H_t}) for time $t$ in some  subset $\{s_1, ...,
s_m\}$ of the interval $(t_i, t_{i+1}]$. Recall
\begin{itemize}
\item[(a-1)]  By Lemma~\ref{lemma2}, the inner-radius $r(t)$ is  Lipschitz  on
$[t_{i-1}, t_{i+1}]$, i.e.,
$$
|r(t)-r(s)| \leq C|t-s|/r(t_i)
$$
for $t, s \in [t_{i-1}, t_{i+1}]$
\item[(a-2)] By Corollary~\ref{cor2}, the outer-radius $r^{out}(t)$ satisfies
$$r(t) \leq r^{out}(t)\leq r(t) + Ch^i \alpha r(t_i) \leq r(t) + Ch^k \alpha r(t_i)$$
for $t \in [t_{i-1}, t_{i+1}]$.
\end{itemize}
Also recall that  $r^{out}(t)$ is not necessarily Lipschitz on
$[t_{i-1}, t_{i+1}]$. However using the properties (a-1) and (a-2),
we can construct a Lipschitz function $R(t)$ on $[t_{i-1}, t_{i+1}]$
such that
\begin{itemize}
\item[(b-1)]
$ |R(t)-R(s)| \leq C|t-s|/r(t_i) $ for $t, s \in [t_{i-1}, t_{i+1}]$
\item[(b-2)]$r^{out}(t) \leq R(t) \leq r(t)+ C h^k \alpha r(t_i)$
\item[(b-3)] $R(t) = r^{out}(t)$ for $t $ in some subset $ \{s_1,  ..., s_m\}$
 of $[t_i, t_{i+1}]$ such that\\
 $t_i =s_0<s_1< ...< s_m<s_{m+1}=t_{i+1}$ and
$$
 s_j-s_{j-1} \leq h^k \alpha r^2(t_i)
$$
 for  $1\leq j \leq m+1$.
\end{itemize}
Now let $\tilde{\Omega}$ be a space time region  on the time
interval $[t_{i-1}, t_{i+1}]$ such that
$$\tilde{\Omega}_t  = B_{R(t)}(0)$$
for $t \in [t_{i-1}, t_{i+1}]$. Let $\tilde{u}$ solve
$$
\left\{\begin{array} {lll} \Delta \tilde{u}= \tilde{u}_t &\hbox{ in
}& \tilde{\Omega}
\\ \\
\tilde{u}= u &\hbox{ on }& \{t=t_{i-1}\}
\\ \\
 \tilde{u}= 0 &\hbox{ on  }& \partial \tilde{\Omega} \cap \{t_{i-1} <t <t_{i+1}\}.
\end{array}\right.
$$
Then by the construction of $R(t)$,
\begin{equation} \label{x0}
u \leq \tilde{u} \leq u +C h^k \alpha r(t_i)
\end{equation}
where the first inequality follows from the first inequality of
(b-2) and the last inequality follows from the last inequality of
(b-2) with $|\nabla u| \leq C_0M$.

Fix $t \in \{s_1,...,s_m\}$. Then $\Gamma_t(u)$ intersects
$\Gamma_t(\tilde{u})$ since $\Gamma_t(\tilde{u})= \partial
B_{r^{out}(t)}(0)$.  Let $x_0 \in \Gamma_t(u) \cap
\Gamma_t(\tilde{u})$, then by (\ref{x0})
\begin{equation} \label{x00}
|\nabla \tilde{u}(x_0, t)| \geq 1.
\end{equation}
On the other hand, let $\tilde{H}(\cdot)$ be the harmonic function
defined in $B_{r^{out}(t)}(0) - B_{(1-h^{k/2}\alpha^{\e})r(t)}(0) $
with the following boundary data
$$
\left\{\begin{array} {lll} \tilde{H}= 0 &\hbox{ on }&\partial
B_{r^{out}(t)}(0)
\\  \\
\tilde{H}= m &\hbox{ on  }& \partial
B_{(1-h^{k/2}\alpha^{\e})r(t)}(0)
\end{array}\right.
$$
where
$$
m := \max\{ u(x, t):x \in\partial
B_{(1-h^{k/2}\alpha^{\e})r(t)}(0)\} + C h^k \alpha r(t_i).
$$
Then by Lemma~\ref{ca} applied to $\tilde{u}$ with (\ref{x0}),
\begin{equation} \label{H123}
\tilde{H}(\cdot) \geq (1-h^{ak/2}\alpha^{a\e} )\tilde{u}(\cdot, t)
\end{equation}
in $B_{r^{out}(t)}(0) - B_{(1-h^{k/2}\alpha^{\e})r(t)}(0) $ where
$a=a(n,M)>0$. Hence on $\partial B_{r^{out}(t)}(0)$,
\begin{equation}\label{last}
|\nabla \tilde{H}| \geq (1-h^{ak/2}\alpha^{a\e} )|\nabla
\tilde{u}(x_0, t)|  \geq 1-h^{ak/2}\alpha^{a\e}
\end{equation}
where the last inequality follows from (\ref{x00}).

Now we compare the harmonic functions $H_{(t)}$ and $\tilde{H}$ by
comparing their boundary values  $w_k(\cdot,t)$ and $m$ on $\partial
B_{(1-h^{k/2}\alpha^{\e})r(t)}(0)$. (Recall $t\in \{s_1,...,s_m\}$
is
 fixed.) For  $x \in \partial
B_{(1-h^{k/2}\alpha^{\e})r(t)}(0)$
\begin{eqnarray}
m
 & \leq & (1+Ch^k \alpha^{1-\e})w_k(x,t) + C  h^k \alpha r(t_i)  \nonumber \\
& \leq & (1+Ch^{k/2} \alpha^{1-\e})w_k(x,t) \label{mw1}
\end{eqnarray}
where the first inequality follows from (\ref{cha}) with the
construction of $m$ and last inequality follows since (\ref{cha})
and the almost harmonicity of $w_k$  imply that $w_k \approx
h^{k/2}\alpha^{\e}r(t) $ on $\partial
B_{(1-h^{k/2}\alpha^{\e})r(t)}(0)$.
 Then on $\partial B_{r(t)}(0)$
\begin{eqnarray}
|\nabla H_{(t)}|  &\geq& (1-C h^{k/2} \alpha^{1-\e})\frac{
w_k|_{\partial B_{(1-h^{k/2}\alpha^{\e})r(t)}(0)} }{m}|\nabla
\tilde{H}| \nonumber \\&\geq& 1-Ch^{Ak}\alpha^{A\e} \label{s_j}
\end{eqnarray}
 where  the first inequality follows from
 the constructions of $H_{(t)}$ and $\tilde{H}$ with (b-2) and
 the last inequality follows from  (\ref{last}) and (\ref{mw1}) with
the  constants
 $A= \min \{a/2, 1/2\}$ and $\e=2/3$.
Hence we obtain the desired inequality (\ref{H_t}) for time $t $ in
the subset $\{s_1, ..., s_m\}$ of $(t_i, t_{i+1}]$.

Next we show (\ref{H_t}) for $t \in (s_{j-1}, s_j)$, $1 \leq j \leq
m$. Since $\phi$ is decreasing in time on each dyadic time interval
and the region $\Omega_1$ is shrinking in time, $w_k$ is also
decreasing in time. Hence on $\partial
B_{(1-h^{k/2}\alpha^{\e})r(s_j)}(0)$
\begin{equation} \label{with1}
w_k(\cdot, t) \geq w_k(\cdot, s_j) .
\end{equation}
By (a-1) with $|t-s_j|\leq s_j-s_{j-1} \leq h^k \alpha r^2(t_i)$,
\begin{equation} \label{with2}
0 \leq r(t)-r(s_j) \leq C h^k \alpha r(t).
\end{equation}
 Then by (\ref{with1}) and (\ref{with2}) with the
almost harmonicity of $w_k$,
\begin{equation} \label{with3}
w_k(\cdot, t)|_{\partial B_{(1-h^{k/2}\alpha^{\e})r(t)}(0)} \geq
(1-C h^{k/2} \alpha^{1-\e}) w_k(\cdot, s_j)|_{\partial
B_{(1-h^{k/2}\alpha^{\e})r(s_j)}(0)}.
\end{equation}
Hence we obtain
\begin{eqnarray*}
|\nabla H_{(t)}|_{\partial   B_{r(t)}(0)} &\geq& (1- Ch^{k/2}
\alpha^{1-\e}) |\nabla H_{(s_j)}|_{\partial   B_{r(s_j)}(0)} \\
&\geq& 1-C h^{Ak} \alpha^{A\e}
\end{eqnarray*}
where the first inequality follows from the construction of
$H_{(t)}$ with (\ref{with2}) and (\ref{with3}),  the last inequality
follows from (\ref{s_j}).  Since $t \in (s_{j-1}, s_j]$ for $1 \leq
j \leq m$, we can conclude that (\ref{H_t}) holds for all $t \in
(s_0, s_m] =(t_i, s_m] \subset (t_i, t_{i+1}]$. Then by repeating
the above argument with $t_{i+1}$ replaced by $t_{i+2}$, we can
obtain (\ref{H_t}) for all $t \in (t_i, t_{i+1}]$. Recall that
(\ref{H_t}) implies the first inequality of (\ref{secondin}). Hence
we obtain the properties (i), (ii) and (iii) of the proposition
 for the radial function $w_k$ for $k \geq 2$.

Observe that by (i) and (ii),
\begin{equation} \label{self1}
\sup_{t_k < t< T} \|u(\cdot, t)-w_k(\cdot,t)\|_\infty/\| u(\cdot,
t)\|_\infty \rightarrow 0
\end{equation}
and
\begin{equation} \label{self11}
\sup_{t_k < t< T}{\rm dist}(\Gamma_t (u), \Gamma_t (w_k)) / r(t)
\rightarrow 0
\end{equation}
 as $k
\rightarrow \infty$ where $r(t) = $ diameter of $\Gamma_t (w_k)$/2.
On the other hand, (iii) implies that the radial function $w_k$ is a
supersolution of ($P$) and also the function
$(1+Ch^{Ak}\alpha^{A\e})w_k$ is a subsolution of ($P$), both of
which vanish at time $t=T$. Hence for some  constant $1 \leq \beta
\leq 1+Ch^{Ak}\alpha^{A\e}$, a radial solution $v$ of ($P$)
 vanishes at time $t=T$ if $v$ has an initial condition
 $v(\cdot,t_{k-1}) = \beta w_k(\cdot,t_{k-1}) = \beta \phi(\cdot, t_{k-1})$.
Then by a similar argument as in the proof of Lemma~\ref{lemma1}, we
can show that the free boundary $\Gamma_t(v)$ is located in the
$C(n,M)h^{Ak}\alpha^{A\e}r(t)$-neighborhood of $\Gamma_t(w_k)$ since
$v$ and $w_k$, otherwise, would have different extinction times.
Then using the upper bounds of $|\nabla w_k|$ and $|\nabla v|$
(Lemma~\ref{cak}),
\begin{equation} \label{self2}
 |v(\cdot, t)-w_k(\cdot, t)| \leq
C(n,M)h^{Ak}\alpha^{A\e}r(t)
\end{equation}
where $r(t) \approx \|w_k(\cdot, t)\|_\infty$. By Lemma~\ref{ghv},
$v$ is asymptotically self-similar and hence we can conclude from
(\ref{self1}), (\ref{self11}) and (\ref{self2}) that $u$ is
asymptotically self-similar.
\end{proof}

The next corollary  follows from Lemma~\ref{AW} and the flatness of
 $\Gamma(u)$. Note that it was proved in  [W] that a limit solution of ($P$) is also a
solution in the sense of domain variation.

\begin{corollary} \label{cor} [(iii) of Theorem~\ref{thm1}]
Let $0<h=h(n,M)<1$  be  as in Lemma~\ref{lemma5}. Then there is a constant $c_0>0$ depending only on $n$ and $M$ such that  if $k\geq 2$ and 
$$\alpha h^k < c_0$$
 then $\Gamma(u) \cap
\{t_k<t<T\}$ is a graph of $C^{1+\gamma, \gamma}$ function and the
space normal is H\"{o}lder continuous. In fact,  we can take 
$$c_0 = \frac{C_nC_3}{CC_4} \min\{\frac{1}{L}, \frac{1}{M^2} \}$$
where $C_n$ is a dimensional constant, $C$ is a constant given as in (i) of proposition~\ref{lastlemma}, $C_3$ and $C_4$ are constants given as in (\ref{rt}) of Lemma~\ref{lemma1}, $L=L(n,M)$ is the Lipschitz constant for $\Omega_1$, which is  given as in Lemma~\ref{lemma2}.
\end{corollary}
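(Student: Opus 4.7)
The plan is to apply Lemma~\ref{AW} at each free boundary point of $\Gamma(u) \cap \{t_{k_0} < t < T\}$ after a parabolic rescaling, with the flatness parameter $\sigma$ and the gradient excess $\tau$ both driven small by (i)--(iii) of Proposition~\ref{lastlemma}. The constant $c_0$ will then emerge from matching the two smallness conditions of Andersson--Weiss to the quantitative bounds of Proposition~\ref{lastlemma}.

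Fix any $(x_0, t_0) \in \Gamma(u)$ with $t_0 \in (t_k, t_{k+1})$ for some $k \geq k_0$, and write $r_0 = r(t_0)$. Rotate coordinates so that $e_n$ is the outward normal at the point of $\partial B_{r_0}(0)$ closest to $x_0$, and for a scale $\rho > 0$ to be chosen introduce the rescaling $\tilde u(y, s) = u(x_0 + \rho y, t_0 + \rho^2 s)/\rho$. Then $\tilde u$ is again a solution of $(P)$ with $(0,0) \in \partial\{\tilde u > 0\}$, and the task is to verify the hypotheses of Lemma~\ref{AW} in $Q_1^-(0,0)$.

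For the flatness, (i) of Proposition~\ref{lastlemma} confines $\Gamma_t(u)$ to an annulus of thickness $Ch^k\alpha\, r(t)$ around $\partial B_{r(t)}(0)$. In $B_\rho(x_0)$ the sphere $\partial B_{r_0}(0)$ lies in an $O(\rho^2/r_0)$-slab about its tangent hyperplane, and by the Lipschitz control of $r(t)$ from Lemma~\ref{lemma2} its motion over $[t_0-\rho^2, t_0+\rho^2]$ contributes another $O(\rho^2/r_0)$. Consequently, in rescaled coordinates the free boundary lies in $\{y_n \leq \sigma\}$ with
\begin{equation*}
\sigma \;\leq\; C\!\left(\frac{\rho}{r_0} + \frac{h^k\alpha\,r_0}{\rho}\right),
\end{equation*}
which is optimized by $\rho = r_0\sqrt{h^k\alpha}$, giving $\sigma = C\sqrt{h^k\alpha}$; note $\rho \ll r_0$ when $h^k\alpha$ is small, so the linearization of $\partial B_{r_0}(0)$ is legitimate.

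For the gradient bound, I must show $|\nabla \tilde u| \leq 1 + \tau$ in $Q_1^-(0,0)$ with $\tau \leq \sigma_1\sigma^2 \sim h^k\alpha$. From (ii) and (iii), $w_k \leq u \leq w_k + Ch^k\alpha^{1-\e}\max u$ and $|\nabla w_k| \leq 1$ on $\partial \Omega(w_k) = \partial B_{r(t)}(0)$, so that $w_k$ is a classical supersolution of $(P)$; by Lemmas~\ref{ca} and \ref{caf421} applied in $\Omega_1$, $w_k$ is nearly linear in a layer of thickness comparable to $\rho$ near its smooth boundary, with gradient within $Ch^{Ak}\alpha^{A\e}$ of $1$. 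Applying Lemma~\ref{Di262} to the caloric function $u - w_k$, whose sup-norm over a ball of radius $\rho$ is bounded by $Ch^k\alpha^{1-\e} r_0$, and combining with this gradient control on $w_k$ yields $|\nabla \tilde u| \leq 1 + Ch^k\alpha$ in $Q_1^-(0,0)$.

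With both $\sigma = C\sqrt{h^k\alpha} \leq \sigma_1$ and $\tau = Ch^k\alpha \leq \sigma_1\sigma^2$ verified, Lemma~\ref{AW} applies to $\tilde u$ (which is a domain-variation solution by [W]) and asserts that $\partial\{\tilde u>0\} \cap Q_{1/4}^-$ is a graph of a $C^{1+\gamma,\gamma}$-function with H\"older continuous space normal. Rescaling back and covering $\Gamma(u) \cap \{t_{k_0} < t < T\}$ by such parabolic neighborhoods yields the regularity claim. The explicit form $c_0 = (C_nC_3/(CC_4))\min\{1/L, 1/M^2\}$ reflects the two smallness conditions: the factor $1/L$ controls the flatness threshold $\sigma \leq \sigma_1$ (which scales with the Lipschitz constant of $\Omega_1$ governing how $\partial B_{r(t)}(0)$ is captured), while $1/M^2$ controls the sharper gradient threshold $\tau \leq \sigma_1\sigma^2$ (which depends on the universal bound $|\nabla u| \leq C_0 M$). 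I expect the sharp gradient estimate to be the main obstacle, because the bound $\tau = O(h^k\alpha)$ must be achieved at the exact order $\sigma^2$ rather than merely being small; this precludes black-box parabolic regularity and relies essentially on the quantitative approximation of $u$ by the supersolution $w_k$ of (iii) combined with the flatness of $\partial\Omega(w_k)$.
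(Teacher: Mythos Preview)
Your overall plan --- rescale parabolically at a free boundary point and feed into Lemma~\ref{AW} --- is the same as the paper's, and your flatness step is fine. The genuine gap is in your gradient bound.

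You propose to obtain $|\nabla u|\le 1+\tau$ with $\tau=O(h^k\alpha)$ by writing $u=w_k+(u-w_k)$, using (iii) of Proposition~\ref{lastlemma} for $|\nabla w_k|$, and then applying Lemma~\ref{Di262} to $u-w_k$ on a cube of radius $\rho=r_0\sqrt{h^k\alpha}$. This fails for two reasons. First, $u-w_k$ is caloric only in $\Omega_1\cap\{u>0\}$, and the parabolic cube $Q_\rho^-(x_0,t_0)$ centered at a free boundary point necessarily crosses both $\Gamma(u)$ and $\Gamma(w_k)$; so Lemma~\ref{Di262} is simply not applicable there. Second, even if one ignores this and computes formally, (ii) gives $\|u-w_k\|_\infty\le Ch^k\alpha^{1-\e}r_0$ with $\e=2/3$, so the interior estimate would yield at best
\[
|\nabla(u-w_k)|\;\lesssim\;\frac{h^k\alpha^{1-\e}r_0}{\rho}\;=\;h^{k/2}\alpha^{\,1/2-\e}\;=\;h^{k/2}\alpha^{-1/6},
\]
which blows up as $\alpha\to 0$; you certainly do not get $\tau=O(h^k\alpha)=O(\sigma^2)$. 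Your own caveat in the last paragraph anticipates exactly this difficulty, but the argument as written does not overcome it.

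The paper avoids this entirely by a Bernstein-type device: it takes $\rho=\delta r(\tau)$ with a \emph{fixed} $\delta=\delta(n,M)$, so the flatness parameter is the fixed constant $\sigma=\sigma_1$ and one only needs the fixed threshold $\tau\le\sigma_1^3$. This is obtained by noting that $W:=\max\{|\nabla u|^2-1,0\}$ is subcaloric in $\{u>0\}$ and vanishes on $\Gamma(u)$ (from the free boundary condition), hence $W\le v$ where $v$ is the caloric function in the Lipschitz container $\tilde\Omega\supset\Omega(u)$ with zero lateral data and initial data $W(\cdot,t_{k-1})\le (C_0M)^2$. Almost harmonicity of $v$ near $\partial\tilde\Omega$ (Lemma~\ref{ca}) then gives $v\le\sigma_1^3$ in a strip of width $\sim\delta r(\tau)$, provided $\delta\le c_n\sigma_1^3/(C_0M)^2$. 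The two constraints on $\delta$ --- $\delta\le\sigma_1/L$ for flatness and $\delta\le c_n\sigma_1^3/(C_0M)^2$ for the gradient --- are precisely what produce the factor $\min\{1/L,1/M^2\}$ in $c_0$; your heuristic for that factor does not match the actual mechanism.
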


\begin{proof}
Let
 $$(y,\tau) \in \Gamma(u) \cap \{t_{k}<t<T\}$$ where $k$ will be
 chosen later in the proof. We will obtain the regularity of $\Gamma(u)$ in a parabolic cube containing $(y , \tau)$, using Lemma~\ref{AW} and Proposition~\ref{lastlemma}. Without loss of generality, we assume that
 $y=(0,...,0,y_n)$ with $y_n >0$, and $\tau \in (t_k, t_{k+1}]$. Let  $0<\sigma_1<1$ be the constant given  as in Lemma~\ref{AW}, and let 
 $$\rho = \delta r(\tau)$$ where $\delta>0$ is a constant
  depending on  $n$ and  $M$, which will be chosen later.

Let $w_k$ be the caloric function constructed as in the proof of
Proposition~\ref{lastlemma}. Recall that $\Gamma(w_k)=\partial
\Omega_1\cap \{t_{k-1}<t<T\} $ is Lipschiz in a parabolic scaling
(see Lemma~\ref{lemma2}). Then by (i) of Proposition~\ref{lastlemma} and
the Lipschitz property of $\Gamma(w_k)$, 
\begin{equation} \label{AW1}
u=0 \,\,\hbox{ in } \,\,Q^-_\rho(y, \tau) \cap \{x: x_n \geq  y_n
+\sigma_1 \rho\} 
\end{equation} 
if 
\begin{equation} \label{AW111} C h^k \alpha < \sigma_1 \delta \,\, \hbox{ and } \,\, \frac{L\delta^2r(\tau)^2}{r(\tau)} \leq \sigma_1 \delta r(\tau)
\end{equation}
where $C =C(n,M)>0$ and $0<h=h(n,M)<1$ are constants given as in (i) of Proposition~\ref{lastlemma}, and $L=L(n,M)>0$ is the Lipschitz constant for $\Gamma(w_k)$ in $Q^-_{r(\tau)}(y, \tau)$. 
Here observe that  (\ref{AW111}) holds if
 $\delta$ and $k$ satisfy
$$
 \delta \leq \sigma_1 / L  \,\, \hbox{ and }\,\,  Ch^k \alpha < \sigma_1 \delta.
 $$
For (\ref{AW1}) as well as for   the arguments below, 
 we choose  $\delta = \delta(n,M)>0$ and $k=k(n,M) \in \N$  satisfying   
  \begin{equation} \label{hsize}
  \delta = \min \{ \frac{\sigma_1}{L}, \frac{c_n \sigma_1^3}{(C_0 M)^2} \}  \,\, \hbox{ and }\,\,  Ch^k \alpha < (\frac{C_3}{2C_4})\sigma_1 \delta < \sigma_1 \delta
  \end{equation}
 where $c_n>0$ is a dimensional constant, $C_0$ is a constant given as in Lemma~\ref{cak}, and   $C_3$, $C_4$ are constants given as in (\ref{rt}).

Next we show $$|\nabla u| \leq 1 +\sigma_1^3 \,\,\hbox{ in
}\,\,Q^-_\rho(y, \tau).$$
 Let
$\tilde{\Omega} \subset \R^n \times [t_{k-1}, t_{k+1}]$ be the
Lipschitz region constructed as in the proof of
Proposition~\ref{lastlemma}, which contains $\Omega(u)\cap \{t_{k-1}
\leq t \leq t_{k+1}\}$. Then since $\max\{|\nabla u|^2-1,0\}$  is
subcaloric  in $\Omega(u)\cap \{t_{k-1}<t<t_{k+1}\} $,
\begin{equation} \label{1leqv}
 |\nabla u|^2-1 \leq v
 \end{equation}
where $v$ solves
$$
\left\{\begin{array} {lll}
v_t=\Delta v  &\hbox{ in } & \tilde{\Omega} \\ \\
v= 0  &\hbox{ on } & \partial \tilde{\Omega} \cap \{t_{k-1}<t<t_{k+1}\} \\ \\
v = \max\{|\nabla u|^2-1,0\} &\hbox{ on } & \{t= t_{k-1}\}.
\end{array}\right.
$$
Observe that the initial condition $|\nabla u_0| \leq M$ implies $|\nabla u| \leq C_0 M$  by Lemma~\ref{cak}, and hence  
 $$\max_{\tilde{\Omega}} v \leq
(C_0M)^2.$$
  Also by Lemma~\ref{ca}, $v(\cdot, t)$ is almost
harmonic near its vanishing boundary  $\partial \tilde{\Omega}_t$
for $t \in [(t_{k-1}+t_k)/2, t_{k+1}]$. Hence for $t
\in [(t_{k-1}+t_k)/2, t_{k+1}]$  and $x \in \tilde{\Omega}$ with ${\rm dist}(x, \partial \tilde{\Omega}_t) \leq
 3\rho = 3\delta r(\tau)$, we get 

\begin{equation} \label{AW2}
 v(x,t) \leq \sigma_1^3 
  \end{equation}
if  
\begin{equation} \label{deltabound}
\delta \leq  c_n \sigma_1^3/\max_{\tilde{\Omega}} v  =c_n \sigma_1^3/(C_0M)^2
\end{equation} where $c_n>0$ is a dimensional constant. Observe that (\ref{deltabound}) follows from (\ref{hsize}).

 On the other hand,  from the construction of $\tilde{\Omega}$,  its
boundary  $\partial \tilde{\Omega}_t$  is located in the $Ch^k\alpha
r(t_{k})$-neighborhood of $\Gamma_t(u)$ for $t \in [t_{k-1},
t_{k+1}]$. Hence for $(x,t) \in \Gamma(u) \cap  Q^-_\rho(y, \tau)$,
\begin{equation} \label{AW3}
 {\rm dist}(x,
\partial \tilde{\Omega}_t) \leq Ch^k\alpha r(t_k) \leq (\frac{C_3}{2C_4})\sigma_1 \delta r(t_k) 
\leq \sigma_1  \delta r(\tau) \leq \delta r(\tau) =\rho
\end{equation}
where the second inequality follows from 
 (\ref{hsize}) and the third inequality follows from (\ref{rt}) since $\tau \in (t_k , t_{k+1}]$.  Then by
(\ref{AW2}) and (\ref{AW3}),
\begin{equation} \label{AW4}
|\nabla u|^2 -1 \leq v \leq \sigma_1^3  \,\,\hbox{ in }\,\,
Q^-_\rho(y, \tau)
\end{equation}
where the first inequality follows from (\ref{1leqv}).

By Lemma~\ref{AW} with (\ref{AW1}) and (\ref{AW4}), we conclude that
$\Gamma(u)\cap \{t_k <t<T\}$ is a graph of $C^{1+\gamma, \gamma}$
function and the space normal is H\"{o}lder continuous.

\end{proof}

\end{document}